\newtheorem{conj}{Conjecture}
\newtheorem{lemma}{Lemma}
\newtheorem{theorem}{Theorem}
\newtheorem{corollary}{Corollary}
\newtheorem{definition}{Definition}
\newtheorem*{definition*}{Definition}
\newtheorem{example}{Example}
\newtheorem{assumption}{Assumption}
\DeclareMathOperator{\conv}{conv}
\DeclareMathOperator{\cone}{cone} 
\DeclareMathOperator{\rank}{rank}
\DeclareMathOperator{\logdet}{logdet} 
\DeclareMathOperator{\tr}{tr} 
\DeclareMathOperator{\diag}{Diag} 
\DeclareMathOperator{\vol}{vol}
\definecolor{brightpink}{rgb}{1.0, 0.0, 0.5}
\definecolor{orange}{rgb}{1, 0.5, 0}
\title{Dual Simplex Volume Maximization for \\ 
Simplex-Structured Matrix Factorization} 
\date{}
\author{
Maryam Abdolali\thanks{Email: maryam.abdolali@kntu.ac.ir} \\ K.N.Toosi University (KNTU) \\ 
Tehran, Iran 
 \and 
Giovanni Barbarino%$^{\dagger}$ 
\thanks{Email: giovanni.barbarino@umons.ac.be. GB acknowledges the support by the European Union (ERC consolidator, eLinoR, no 101085607). GB is member of the Research Group GNCS (Gruppo Nazionale per il Calcolo Scientifico) of INdAM (Istituto Nazionale di Alta Matematica).}
\qquad 
Nicolas Gillis\thanks{Email: nicolas.gillis@umons.ac.be. NG acknowledges the support by the European Union (ERC consolidator, eLinoR, no 101085607).} \\ University of Mons \\ Mons, Belgium
%\and Timothy Marrinan \\ Pacific Northwest National Laboratory \\ Seattle, Washington, USA 
	}
\begin{document}

\maketitle

\begin{abstract}
	Simplex-structured matrix factorization (SSMF) is a generalization of nonnegative matrix factorization, a fundamental interpretable data analysis model, and has applications in hyperspectral unmixing and topic modeling. 
 To obtain identifiable solutions, a standard approach is to find minimum-volume solutions. 
 By taking advantage of the duality/polarity concept for polytopes, we convert minimum-volume SSMF in the primal space to a maximum-volume problem in the dual space. 
 We first prove the identifiability of this maximum-volume dual problem. Then, we use this dual formulation to provide a novel optimization approach which bridges the gap between two existing families of algorithms for SSMF, namely volume minimization and facet identification. 
 %The proposed can also be extended to the  challenging rank-deficient case. 
 Numerical experiments show that the proposed approach performs favorably compared to the state-of-the-art SSMF algorithms. 
\end{abstract}

\textbf{Keywords:}   simplex-structured matrix factorization, 
  matrix factorization, 
  minimum volume, 
  sparsity,  
  polarity/duality, 
  hyperspectral imaging

\section{Introduction}
Matrix factorization (MF) is a fundamental technique for extracting latent low-dimensional factors, with applications in numerous fields, such as data analysis, machine learning and signal processing. 
MF aims to decompose a given data matrix, $X \in \mathbb{R}^{m\times n}$, where the $n$ columns represent $m$-dimensional samples, 
into the product of two smaller matrices, $W \in \mathbb{R}^{m \times r}$ and $H \in \mathbb{R}^{r \times n}$ called factors, such that $X \approx WH$. Often imposing additional constraints, such as sparsity or nonnegativity, on the factors is crucial, e.g., for interpretation purposes, leading to structured (or constrained) matrix factorization (SMF); see, e.g., \cite{udell2016generalized, fu2020computing} and the references therein.  
A specific problem of the broad family of SMF assumes that each column of $H$ belongs to the unit simplex, that is, for all $j$, 
%\begin{definition*}[Simplex-Structure Matrix Factorization (SSMF)]
%	Given $X\in \mathbb{R}^{m \times n}$ and $r$, the goal is to (approximately) decompose it into product of two matrices $X \approx WH$, where columns of the matrix $W \in \mathbb{R}^{m \times r}$ are the basis vectors and the entries of the matrix $H \in \mathbb{R}^{r \times n}$ are the linear weights. The decomposition is subject to the constraint that the columns of $H$ belong to the unit simplex $\Delta^r$,that is, $H(:,j) \in \Delta^r$ for $j=1,\dots,n$; where the unit simplex is defined as:
	\begin{equation*}
		H(:,j) \; \in \; \Delta^r := \Big\{x \in \mathbb{R}^{r} \ \big| \ x \geq 0, e^\top x = \sum_{i=1}^r x_i = 1 \Big\}, 
	\end{equation*} 
 where $e$ is the vector of all ones of appropriate dimension. 
%\end{definition*}
SSMF has several applications in machine learning with two prominent examples including unmixing hyperspectral images where $H(i,j)$ is the proportion/abundance of the $i$th material within the $j$th pixel~\cite{heinz2001fully, bioucas2012hyperspectral, ma2013signal}, 
and topic modeling where where $H(i,j)$ is the contribution of the $i$th topic within the $j$th document~\cite{arora2013practical, fu2018anchor, bakshi2021learning}.  %Inspired by this connection, several approaches are proposed to tackle this challenging problem. 

\paragraph{Contribution and outline of the paper}

 This paper focuses on the concept of duality and uses the correspondence between primal and dual spaces to provide a new perspective on fitting a simplex to the samples. The main contributions are as follows:
 \begin{itemize}
 	\item We present a new formulation for SSMF which is based on the concept of duality. This formulation provides a different perspective on SSMF and bridges the gap between two existing families of approaches: volume minimization and facet-based identification (Section~\ref{sec:proposedmodel}). 
  
  \item We study the identifiability of the parameters with this new formulation (Section~\ref{sec:identif}). 
  
  \item We develop an efficient optimization scheme based on block coordinate descent (Section~\ref{sec:optim}). 
  %We also provide an efficient extension for rank-deficient polytope factorization. 
 % A major advantage of this extension is that it does not rely on any additional parameters and is directly derived from the proposed full-rank formulation.
  
 	\item We provide numerous numerical experiments on both synthetic and real-world data sets, showing that the proposed algorithm competes favorably with the state of the art (Section~\ref{sec:exp}). 
 \end{itemize}

\section{Previous works}

In this paper, we consider the following SSMF formulation: Given $X \in \mathbb{R}^{m \times n}$ and $r>0$, solve 
\[
\min_{W \in \mathbb{R}^{m \times r}, H \in \mathbb{R}^{r \times n}} 
\| X -  WH \|_F^2 
\quad \text{ such that } \quad 
H(:,j) \in \Delta^r \text{ for all } j. 
\] 
SSMF is closely related to NMF which decomposes a nonnegative matrix, $X \geq 0$, as $X = WH$ where $W \geq 0$ and $H \geq 0$~\cite{lee1999learning, gillis2020nonnegative}.  
In fact, normalizing each column of $X$ to have unit $\ell_1$ norm, and assuming w.l.o.g.\ that the columns of $W$ also have unit $\ell_1$ norm, implies that the columns of $H \geq 0$ also have $\ell_1$ norm, since 
$e^\top = 
e^\top X = 
e^\top WH 
= e^\top H$, and hence $H$ is column stochastic. 

\subsection{Geometric interpretation of SSMF and uniqueness/identifiability} \label{sec:geominterpret}

For an exact SSMF decomposition, we have 
\[
X(:,j) = W H(:,j) = \sum_{k=1}^r W(:,k) H(k,j), 
\]
meaning that the columns of $X$ are convex combinations of the column of $W$. In other words, SSMF aims to find $r$ vectors, $\{W(:,k)\}_{k=1}^r$, such that their convex hull contains the columns of $X$, that is, for all $j$ 
\[
X(:,j) \quad \in \quad \conv(W) = \{ x \ | \ x = Wh, h \in \Delta^r \}. 
\]
We will say that $X=WH$ has a unique SSMF if any other SSMF of $X$, say $X = W'H'$, can only be obtained by permutation of the columns of $W$ and rows of $H$, that is, $X = W'H'$ implies that $W'(:,k) = W(:,\pi_k)$ and $H'(k,:) = H(\pi_k,:)$ for some permutation $\pi$ of $\{1,2,\dots,r\}$. 
Without any further constraints, SSMF is never unique, because we can always enlarge the convex hull of $W$ to contain more points, and hence obtain equivalent factorizations~\cite{gillis2015exact}. 
It is therefore crucial for SSMF models to include additional constraints or regularizers to obtain identifiable models. 
There has been three main approaches to achieve this goal: separability, volume minimization and facet-based identification. 
They are described in the next three sections.

\subsection{Separability}  \label{sec:intro:separability}

Separability assumes that the columns of $W$ are among the columns of $X$~\cite{arora2012computing,bioucas2012hyperspectral}, that is, 
%\begin{definition*}[Separability] 
 the matrix $X$ admits an SSMF of the form $X = WH$ with $W = X(:,\mathcal{K})$ and the index set $\mathcal{K}$ contains $r$ elements, that is, $|\mathcal{K}| = r$. Equivalently,  $X = WH$ with $H(:,\mathcal{K}) = I_r$ for some  index set $\mathcal{K}$, where $I_r$  the identity matrix of dimension $r$. See Figure~\ref{fig:GEOsepminvolsparse} (left) for an illustration. 
\begin{figure}[ht!]
	\begin{center}
		\includegraphics[width=\textwidth]{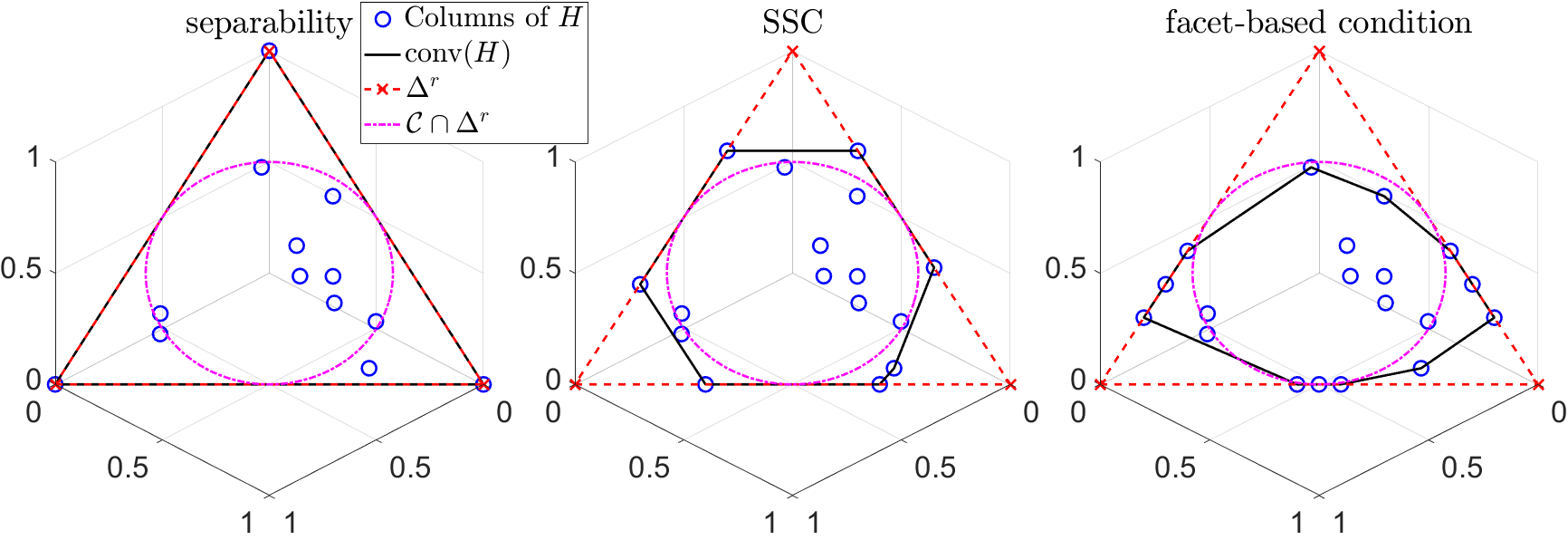}   
		\caption{Comparison of separability (left), SSC (middle), and the facet-based condition (right) for the matrix $H$ whose columns lie on $\Delta^r$ in the case $r = 3$. 
			On the left, separability requires the columns of $H$ to contain the unit vectors, that is, $H(:,\mathcal{K}) = I_r$ for some $\mathcal{K}$.
			On the middle, the SSC requires $\mathcal{C} \subset \cone(H)$. 
			On the right, the facet-based condition requires $r=3$ columns of $H$ on each facet of the unit simplex. Figure from~\cite{abdolali2021simplex}. \label{fig:GEOsepminvolsparse}
   }  
	\end{center}
\end{figure} 
In other words, separability requires that for each basis vector, $W(:,k)$, there exists a data point, $X(:,\mathcal{K}_k)$, such that $W(:,k) = X(:,\mathcal{K}_k)$. This is the so-called pure-pixel assumption in hyperspectral unmixing~\cite{boardman1995mapping}, and the anchor-word assumption in topic modeling~\cite{arora2013practical}. 
%\end{definition*} 

Separability leads to identifiability, and 
simplifies the problem resulting in polynomial-time algorithms, some running in $O(mnr)$ operations, with theoretical guarantees; see~\cite[Chapter 7]{gillis2020nonnegative} for a comprehensive survey on these algorithms. 
However, separability is a strong assumption which might not hold in all real-world scenarios.

\subsection{Volume minimization} \label{sec:volmin} 

In order to relax separability, one can look for an SSMF, $X = WH$, where the volume of the convex hull of the columns of $W$ and the origin within the column space of $W$, which is proportional to $\det\big (W^\top W\big )$, is minimized. 
The first intuitions and empirical evidences came from the hyperspectral imaging literature~\cite{craig1994minimum, miao2007endmember}. 
Later, minimum-volume SSMF was shown to be identifiable~\cite{fu2015blind, lin2015identifiability} under the so-called sufficiently scattered condition\footnote{There exist  several definitions of the SSC, with minor variations. The main condition, $\mathcal{C} \subset \cone(H)$, is always required.} (SSC) introduced in \cite{huang2013non}: 
\begin{definition}[SSC] \label{def:SSC}
	The matrix $H \in \mathbb{R}^{r \times n}$ satisfies the SSC if its conic hull, defined by 
 $
 \text{cone}(H)
 = \big\{y \ | \ y = Hx, x \geq 0 \big\}
 $, contains the second-order cone 
 $
 \mathcal{C} = \big\{x \in \mathbb{R}_+^r  \ | \ e^\top x \geq \sqrt{r-1} ||x||_2 \big\}
 $. 
 Moreover, the only real orthogonal matrices $Q \in \mathbb{R}^{r \times r}$ satisfying $\cone(H) \subset \cone(Q)$ are permutation matrices. 
\end{definition}
Intuitively, this assumption implies that the columns of the matrix $H$ are well scattered in the unit simplex $\Delta^r$. For example, the SSC implies that there are at least $r-1$ columns of $H$ on each facet of $\Delta^r$, meaning that $H$ has at least $r-1$ zeros per row.   
See Figure~\ref{fig:GEOsepminvolsparse} for an illustration, and~\cite{fu2019nonnegative} and \cite[Chapter 4]{gillis2020nonnegative} for more details.

Many algorithms have been designed using volume minimization, starting from~\cite{craig1994minimum}. 
%Under the assumption that this condition is satisfied, minimizing the volume of the simplex which encloses the data points in $X$ recovers the vertices of the simplex as $W$. Minimizing the volume of the simplex can be formulated as the following general optimization problem:
%\begin{align} \label{eq:gen_MV}
%    \min_{W} & \ \text{volume}(\conv(W)) \\
%    & \ \textbf{s.t. } X(:,i) \in \conv(W) \nonumber
%\end{align} 
%There are several approaches to restrict the volume of a simplex in the literature. One of 
A common approach is to minimize the volume of enclosing vertices $W$ by minimizing the  determinant of $W^\top W$~\cite{fu2015blind, lin2015identifiability}: 
\begin{equation}\label{min_vol}
	\min_{W,H} \det(W^\top W) \qquad \text{such that} \qquad X=WH \ \text{and} \ H(:,j)\in \Delta^r \ \text{for all }j. 
\end{equation} 
Under the SSC, solving~\eqref{min_vol} guarantees to recover the columns of $W$ and $H$ in the SSMF $X = WH$, up to permutation~\cite{fu2015blind, lin2015identifiability}. 
In the presence of noise, one has to balance the data fitting term and the volume regularizer by minimizing 
$\|X - WH\|_F^2 + \lambda \det(W^\top W)$ for some well-chosen penalty parameter $\lambda > 0$.  
Another approach is minimum-volume
enclosing simplex (MVES)~\cite{chan2009convex} that attempts to simplify the problem  by focusing on the volume of a dimension-reduced transformation of $W$ (via the SVD), say $\bar W \in \mathbb{R}^{r \times r-1}$; see Section~\ref{sec:preproc} for details. MVES works with the transformed matrix 
\begin{equation}\label{eq:tW} 
\Tilde{W}  = 
[ \bar W(:,1)-\bar W(:,r), \dots , \bar W(:,r-1)-\bar W(:,r)]  
\in \mathbb{R}^{(r-1) \times (r-1)}, 
\end{equation}
and minimizes $|\det(\tilde W)| = \vol\big(\conv(\bar W) \big)$.  
This reformulation allows them to solve the subproblem in each column of $W$ via alternating linear optimization. 
More recently, a more general class of problems is considered in~\cite{tatli2021polytopic}, where the columns of $H$ are restricted to belong to a polytope, which is referred to as polytopic matrix factorization. 
Instead of minimizing the volume of $\conv(W)$, the determinant of $HH^\top$ is maximized, with identifiability guarantees under a generalized SSC. 

In contrast to separable-based algorithms,  volume-minimization problems, such as~\eqref{min_vol}, are not convex, and hence it is not straightforward to solving them up to global optimality. Hence although volume minimization allows one to theoretically identify SSMF under relaxed conditions, it makes the optimization problems harder to solve than under the separability assumption. Also, robustness to noise is not well understood.

\subsection{Facet-based identification} 

Instead of looking for columns of $W$ whose convex hull contains the columns of $X$, 
one can instead look for a set of facets (a facet is an affine hyperplane $\{ x \ | \ a^\top x = b\}$
delimiting the associated half space $\{ x \ | \ a^\top x \leq b\}$ for some vector $a$ and scalar $b$) enclosing a region where the columns of $X$ lie.   
%these algorithms convert the problem of the vertex identification (columns of $W$) into the problem of facet enumeration. 
%In particular, using several different methodologies and assumptions, facets of the simplex are extracted and the vertices are computed from the obtained facets. 
Two main algorithms in this category are the following: %Maximize Volume Inscribed Ellipsoid (MVIE)~\cite{lin2018maximum} and Greedy Facet-based Polytope Identification (GFPI)~\cite{abdolali2021simplex}:
\begin{itemize}	

\item{Minimum-volume inscribed ellipsoid (MVIE)}~\cite{lin2018maximum} identifies the enclosing $r$ facets by a two-step approach: (i)~generate all facets of $\conv(X)$, and (ii)~find the maximum-volume ellipsoid inscribed in the generated facets. 
Under the SSC, this ellipsoid touches every facet of $\conv(W)$ which leads to the identification of $r$ facets of the simplex and, subsequently, the vertices in $W$. Although MVIE is guaranteed to recover $W$ in the noiseless case under the SSC, it relies on the computationally expensive algorithm of facet enumeration limiting the algorithm values of $r$ up to around $10$, and is sensitive to noise and outliers. 

 \item{Greedy facet-based polytope identification (GFPI)}~\cite{abdolali2021simplex} uses duality to map the facet identification problem in the primal space into the corresponding vertex identification problem in the dual space. Using duality, GFPI prioritizes facets with most samples on them. GFPI formulates this problem as a mixed integer program which identifies the facets sequentially. 
 %Despite high computational complexity, 
 GFPI has several significant advantages over other approaches, including the ability to handle rank-deficient matrices, outliers, and input data that violates the SSC. Moreover, it is identifiable under a typically weaker condition than the SSC, namely the facet-based condition (FBC) which requires $r$ data points on each facet of $\conv(W)$ (and some other minor conditions generically satisfied); 
 see Figure~\ref{fig:GEOsepminvolsparse} for an illustration and~\cite{abdolali2021simplex} for more details. 
\end{itemize}

In the next section, we introduce our novel approach which 
%The approach presented in this paper 
relies on facet-based identification. %and 
It is based on a novel efficient vertex enumeration in the dual space. 
In contrast to GFPI, the proposed approach does not rely on greedy sequential identification of vertices (corresponding facets in the primal space) but identifies the facets simultaneously by maximizing their volume in the dual space. It is presented in the next section.

\section{Proposed model: SSMF based on maximum-volume in the polar} \label{sec:proposedmodel}

Our proposed approach is based on duality/polarity (we use both words interchangeably in this paper). 
In order to recover the columns of $W$, which are the vertices of the simplex enclosing the columns of $X$, we focus on extracting the facets of its convex hull. The facets are implicitly obtained by calculating the vertices of the corresponding dual simplex. Before explaining this in Section~\ref{sec:polar}, 
we first reduce the dimension to work with full-dimensional polytopes.  
This reduction requires $\conv(X)$ to have dimension $r-1$ which requires the dimension of its affine hull to be $r-1$, which we will assume throughout the paper. 
\begin{assumption} \label{ass:dimaffhull}
    The affine hull of $X$, $\{  y \ | \ y = Xh \text{ where } e^\top h = 1 \}$, has dimension $r-1$. 
\end{assumption} 
If $\rank(H) = r$, which is implied by the SSC condition, and if the affine hull of $W$ has dimension $r-1$, then the affine hull of $X$  has dimension $r-1$. 
Note that $\rank(W) = r$ implies that the affine hull of $W$ has dimension $r-1$. However, we could also have the case $\rank(W) = r-1$ if $0$ belongs to the affine hull of $W$ (e.g., in 2 dimensions, $\conv(W)$ is a triangle containing the origin).

\subsection{Preprocessing: translation and dimensionality reduction}  \label{sec:preproc}

In this paper, like in many other SSMF approaches, e.g., 
MVES~\cite{chan2009convex} 
and GFPI~\cite{abdolali2021simplex}, see also~\cite{ma2013signal}, we will use a preprocessing of the data to reduce it to an $(r-1)$-dimensional space. 
This has several motivations: 
\begin{itemize}
    \item The convex hull of the columns of $W$, $\conv(W)$, is an $r-1$ dimensional simplex, under Assumption~\ref{ass:dimaffhull}.  

    \item In the noiseless case, the preprocessing does not change the geometry and properties of the problem. In noisy settings, it filters noise via dimensionality reduction. %We will use the singular value decomposition (SVD), although other techniques could be used in the presence of outliers and non-Gaussian noise. 

    \item The notion of polarity is simpler to grasp for full-dimensional polytopes: the polar of $\conv(W)$ will also be an $(r-1)$-dimensional polytope. 
    
\end{itemize}

The preprocessing has two steps. 

\paragraph{Step 1: Translation around the origin.}

Let us choose a point, $v$, in the relative interior of $\conv(X)$. %\gbc{The interior is empty here since we still have not projected. The vector will be in the interior with respect to the column space}. 
For example, one can choose the sample mean, $v = \bar{x} = \frac{1}{n} \sum_{j=1}^n X(:,j)$. We will discuss in Section~\ref{sec:identif} the importance of this choice, which will need to be part of the optimization problem to obtain identifiability.  
 The first step for preprocessing the data is to remove $v$ from each sample to obtain $\hat{X} = X - v e^\top$. 
Let $X = WH$ be an SSMF of $X$ where $e^\top H = e^\top$ and $H \geq 0$. 
 This first step simply amounts to translating the SSMF problem, since 
 \[
\hat{X} = X - v e^\top 
= WH - ve^\top 
= [W - ve^\top] H = \hat W H, \text{ with } \hat W =  W - ve^\top. 
 \]
 %because $e^\top H = e^\top$. 
 Since $v$ is in the relative interior of $\conv(X)$, the vector of zeros is in the relative interior of $\conv(\hat{X})$: $v = Xh$ for some $h > 0$ and $e^\top h = 1$  implying   
 \[
0 = Xh - v = [X-v e^\top] h = \hat X h. 
 \]
 This shows that the column space of $\hat X$ has dimension $r-1$,  
under Assumption~\ref{ass:dimaffhull}.  
 %\ngi{given that $\rank(X) = r$}.  \gbc{Maybe it's best to define $r$ as the rank of $\hat X$ plus 1}

 \paragraph{Step 2: Dimensionality reduction} The second step is to project the centered samples $\hat{X}$  onto the $(r-1)$-dimensional column space of $\hat{X}$ 
 using the truncated SVD. 
 Let $U \Sigma V^\top$ be the truncated SVD of $\hat{X}$ where $U \in \mathbb{R}^{m \times (r-1)}$, $\Sigma \in \mathbb{R}^{(r-1)\times (r-1)}$ and $V \in \mathbb{R}^{n \times (r-1)}$. The projected samples $Y \in \mathbb{R}^{(r-1) \times n}$ are obtained by: $Y = U^\top\hat{X} = \Sigma V^\top$. 
%Note that 0 belongs to the convex hull of $\hat{W}$ since $v$ belongs to the convex hull of $W$. 
The second step of the preprocessing simply premultiplies $\hat{X}$ by $U^\top$, to obtain 
\[
Y = U^\top \hat{X} = (U^\top \hat{W}) H = P H, 
\quad \text{ with } P = U^\top \hat W =  U^\top [W - ve^\top] \in \mathbb{R}^{(r-1) \times r} .  
\] 
This is also an equivalent SSMF of smaller dimension, with the same matrix $H$. 
In the presence of noise, this preprocessing can help filter the noise. 
Note that in the presence of non-Gaussian noise, one might project using other norms, that is, not use the SVD which is based on the $\ell_2$ norm  but low-rank matrix approximations minimizing other norms, e.g.,~\cite{candes2011robust, gillis2018complexity}.

\subsection{Polar representation} \label{sec:polar}

We have now transformed the original rank-$r$ SSMF problem 
of matrix $X \in \mathbb{R}^{m \times n}$  
into an equivalent SSMF problem of a rank-$(r-1)$ matrix $Y \in \mathbb{R}^{r-1 \times n}$. 
%To simplify notation, we will denote $d = r-1$ the ambient dimension. 

Let us show how to construct a polar formulation of this problem. Any feasible solution $(P,H)$ of SSMF for $Y$ 
satisfies $Y = P H$ where $P \in \mathbb{R}^{r-1 \times r}$ and $H(:,j) \in \Delta^r$ for all $j$. By the geometric interpretation of SSMF, see Section~\ref{sec:geominterpret},   
$\conv(Y) \subseteq \conv(P)$. Let us define the polar of a set. 
\begin{definition}[Polar]
    Given any set $\mathcal S \subseteq \mathbb R^d$, %which contains the origin in its interior, 
    its \textit{polar}, denoted $\mathcal S^*$, is defined as 
    %affine constraints that identify $\conv(\mathcal S)$, that is, 
    \[
    \mathcal S^* := \left\{ \theta \in \mathbb R^d \, \big| \, \theta^\top x \le 1\,\, \text{ for all } x\in \mathcal S  \right\}. 
    \]
    %\gbc{I wouldn't say "identify" since they are the constraints of $\conv\left(\{0\}\cup \mathcal S\right)$}
\end{definition}
\noindent Polars have many interesting properties~\cite{ziegler2012lectures}. 
In particular, 
\begin{itemize}
    \item If $\mathcal S_1 \subseteq \mathcal S_2$ then $\mathcal S_2^* \subseteq \mathcal S_1^*$. Moreover, for any bounded $\mathcal S$ its polar $\mathcal S^*$ contains the origin in its interior.

    \item For any invertible matrix $M\in\mathbb R^{d\times d}$, $(M\mathcal S)^* = M^{-\top}\mathcal S^*$. 

    \item Suppose that $\mathcal S$ is a polytope containing the origin in its interior.
%any vertex of $\mathcal S$ corresponds to a facet of $\mathcal S^*$, and vice versa. 
 If $\mathcal S$ %is a polytope containing the origin in its interior with 
has $r\ge d+1$ vertices, then $\mathcal S^*$ is a polytope with $r$ facets and vice versa. If $\mathcal S$  is also a simplex, that is, an $(r-1)$-dimensional polytope in $\mathbb R^{r-1}$ with $r$ vertices and $r$ facets, then $\mathcal S^*$ is a simplex. 
By extension, given a matrix $P \in \mathbb{R}^{(r-1) \times r}$ whose columns define a simplex containing the origin in its interior, we will refer to its polar matrix as the matrix $\Theta \in \mathbb{R}^{(r-1) \times r}$ whose columns are the vertices of $\conv(P)^*$

    \item For a polytope $\mathcal S$ containing the origin in its interior, $(\mathcal S^*)^* = \mathcal S$.

\item The polar of the unit ball is itself, and for any matrix $Q\in \mathbb R^{r-1\times r}$ {such that $\left[ \begin{array}{c}
                         Q\\e^\top/\sqrt r
                        \end{array} 
                        \right] $ is an $r\times r$ orthogonal matrix}, 
                        the polar of $\conv(Q)$ is $\conv(-rQ)$. 
    %More generally, $(\mathcal S^*)^* = \overline{\conv(\mathcal S\cup\{0\})}$. 
\end{itemize}

Let us come back to SSMF: given $Y$, we need to find $P$ such that $\conv(Y) \subseteq \conv(P)$. 
In the polar, we will have $\conv(P)^* \subseteq \conv(Y)^*$, where the vertices of $\conv(P)^*$ are the facets of $\conv(P)$, given that the origin belongs to the interior of $\conv(Y)$.  
%\gbc{since $0$ is in the interior of $\conv(Y)$ and thus of $\conv(P)$}. 
Hence any matrix $P \in \mathbb{R}^{r-1 \times r}$ such that $\conv(P)^* \subseteq \conv(Y)^*$ corresponds to a feasible solution of SSMF. 
Another well-known property of polars is the following: for a matrix $Y$,  
\begin{align*}
\conv(Y)^* 
&  = \{ \theta \ | \ y^\top \theta \leq 1, y = Yh, h \in \Delta^r \}  \\
&  = \{ \theta \ | \ (Yh)^\top \theta \leq 1, h \in \Delta^r \} \\
&  = \{ \theta \ | \ h^\top (Y^\top \theta) \leq 1, h \in \Delta^r \}   = \{ \theta \ | \ Y^\top \theta \leq e \}, 
\end{align*}
since $h^\top x \leq 1$ for all $h \in \Delta^r$ if and only if $x \leq e$. 
%This observation implies that if $\conv(P)$ is an $(r-1)$-dimensional simplex with $r$ vertices, with $P \in \mathbb{R}^{r-1 \times r}$ with $\rank(P) = r-1$ and $0 \in \conv(P)$, then $\conv(P)^*$ is also an $(r-1)$-dimensional simplex with $r$ vertices which can be computed as the intersections of $r-1$ facets of $\{ \theta \ | \ P^\top \theta \leq e \}$. 
In the following, we will assume that the origin is in the interior of $\conv(P)$. If now $\Theta$ is the polar matrix of $P$, then $\conv(\Theta)^* = \{ x \ | \ \Theta^\top x \leq e \} = \conv(P)$ since the polar of the polar of a polytope is the polytope itself, and the origin is contained in the interior of $\conv(\Theta)=\conv(P)^*$ since $\conv(P)$ is bounded. Given $\Theta$, $P$ can be recovered by computing the vertices of $\conv(\Theta)^* = \{ x \ | \ \Theta^\top x \leq e \} = \conv(P)$, and vice versa.   
%Given $\Theta$, the columns of $P$ can be computed as the $r$ intersections of $r-1$ facets of $\{ x \ | \ \Theta^\top x \leq e \}$, and vice versa. 

The constraint $\conv(\Theta) = \conv(P)^* \subseteq \conv(Y)^*$ can therefore be written as $Y^\top \Theta \leq 1_{n \times r}$ where $1_{n \times r}$ is the matrix of all-ones of size $n$ by $r$.  
Any matrix $\Theta \in \mathbb{R}^{r-1 \times r}$ satisfying $Y^\top \Theta \leq 1_{n \times r}$ and with the origin in the interior of $\conv(\Theta)$  thus corresponds to a feasible solution to SSMF. 

This observation was used in~\cite{abdolali2021simplex} to find a $P$ such that as many data points where located on the facets of $\conv(P)$: 
Let $A = Y^\top \Theta \leq 1_{n \times r}$, then $A(i,j) = 1$ means that the $i$th data points, $Y(:,i)$, is located on the $j$th facet of $\conv(P)$, given by $\{ x \ | \ \Theta(:,j)^\top x = 1\}$, since \mbox{$A(i,j) = \Theta(:,j)^\top Y(:,i) = 1$}. 
Hence maximizing the number of ones in $A$ maximizes the number of data point on the facets of $\conv(P)$, which  has a unique solution (that is, the SSMF is identifiable) under the facet-based condition~\cite{abdolali2021simplex}.

\subsection{Maximizing the volume in the polar} \label{sec:volmax}

In this paper, we do not attempt to maximize the number of data points on the facets of $\conv(P)$, which is a combinatorial problem, which was solved in a greedy fashion using mixed-integer programming via the GFPI algorithm of~\cite{abdolali2021simplex}.   
Instead, we propose to solve the problem at once, maximizing the volume of $\conv(\Theta)$ in the dual space. %\ngc{Give some more motivation/intuition about this choice.}
The rationale behind this choice is that the larger a set is, the smallest its dual is, since    
    $\mathcal S_2^* \subseteq \mathcal S_1^*$ 
    implies 
    $\mathcal S_1 \subseteq \mathcal S_2$, and minimizing the volume in the primal has shown to be a powerful approach; see Section~\ref{sec:volmin}.   
We therefore propose to solve the following model: Given $Y \in \mathbb{R}^{r-1 \times n}$, solve 
\begin{equation} \label{eq:firstformupolar} 
    \max_{\Theta \in \mathbb{R}^{r-1 \times r}} 
\vol\big( \conv(\Theta) \big) 
\quad \text{ such that } \quad 
Y^\top \Theta \leq 1_{n \times r}. 
\end{equation}
Recall that the constraint $Y^\top \Theta \leq 1_{n \times r}$ is equivalent to $\conv(\Theta) \subseteq \conv(Y)^*$. 
The volume can be computed as follows 
\begin{align*}
   \vol\big( \conv(\Theta) \big) 
& = \frac{1}{(r-1)!}
\left| \det  \left[ \begin{array}{c}
                        \Theta \\
                        e^\top 
                        \end{array} 
                        \right] \right|.
\end{align*}

% \begin{align*}
%    \vol\big( \conv(\Theta) \big) 
% & = \frac{1}{(r-1)!}
% \left| \det  \left[ \begin{array}{c}
%                         \Theta \\
%                         e^\top 
%                         \end{array} 
%                         \right] \right| \\ 
%                         & = \frac{1}{(r-1)!}
%    \Big| \det \big[ 
%    \Theta(:,2)-\Theta(:,1), \Theta(:,2)-\Theta(:,1), \dots, \Theta(:,r)-\Theta(:,1) 
%    \big] \Big|.   
% \end{align*}

\paragraph{Link with volume minimization} 

Solving~\eqref{eq:firstformupolar} is not equivalent to volume minimization in the primal~\eqref{min_vol}. In fact, the problem of maximizing the volume of $\conv(\hat P)^*$ among all the polar sets of the matrices $\hat P\in \mathbb R^{r-1\times r}$ such that $\conv(Y)\subseteq \conv(\hat P)$ is equivalent to 
\begin{equation} \label{eq:equivalent_primal_minvol} 
    \min_{\hat P \in \mathbb{R}^{r-1 \times r}, z \in \mathbb{R}^{r}} 
\vol\big( \conv(\hat P) \big) \prod_{i=1}^r z_i
\quad \text{ such that } \quad 
\conv(Y)\subseteq \conv(\hat P),\quad 
\left[ \begin{array}{c}
                         \hat P\\
    e^\top
                        \end{array} 
                        \right]
 z = e_r, \quad z\ge 0. 
\end{equation}
Notice that $\conv(Y)\subseteq \conv(\hat P)$ can be rewritten as $Y = \hat PH$ where $H(:,j)\in \Delta^r$ for all $j$. We can thus observe that \eqref{eq:equivalent_primal_minvol} differs from \eqref{min_vol} and will in general give different results. The key difference is the presence of the vector $z$, representing the barycentric coordinates of the origin with respect to the simplex whose vertices are the columns of $\hat P$. 
Consider for example a simplex $\hat P$ with a small $z_i$, implying that the origin is very close to one of the facets of $\hat P$. In turn this yields one of the constraint of $\hat P$ to be represented in the dual by a vector $\theta_i$ whose norm is proportional to $1/z_i$ and consequentially very large.  This is the rationale linking the volume of $\Theta$ and the vector $z$.  \\

We will discuss in details how we handle \eqref{eq:firstformupolar} in Section~\ref{sec:optim}, and how we can adapt it in the presence of noise. 
%, and also in rank-deficient cases, that is, when $\conv(W)$ is not a simplex: it has $r$ vertices but its dimension is strictly smaller than $r-1$. 
But first, we discuss the identifiability guarantees of solving~\eqref{eq:firstformupolar}. %:in a nutshell,  for $Y = PH$ where  $H$ satisfies the SSC, and for a well-chosen translation vector $v$, solving~\eqref{eq:firstformupolar} identifies the SSMF of $Y = PH$.  

\section{Identifiability} \label{sec:identif}

In this section, we prove identifiability of dual volume maximization under various assumptions, namely under the SSC (Section~\ref{sec:ssc}), separability (Section~\ref{sec:separability}), and a new condition between the two which we call $\eta$-expansion (Section~\ref{sec:expanded}). 
As we will show, the identifiability depends on the choice of the translation vector $v$, and we provide in Section~\ref{sec:minmaxapproach} a min-max formulation that optimizes the choice of $v$ (Section~\ref{sec:minmaxapproach}). This will be the formulation we solve in Section~\ref{sec:optim} to tackle SSMF.

\subsection{SSC}   \label{sec:ssc} 

Let $X= WH$ be a rank-$r$ SSMF. %of the matrix $X\in \mathbb R^{m\times n}$ with $H\in \mathbb R^{r\times n}$ column-stochastic. 
After the preprocessing discussed in Section \ref{sec:preproc}, we find the corresponding SSMF of $Y= PH$, where now $Y\in \mathbb R^{r-1\times n}$ and $P\in \mathbb R^{r-1\times r}$, with the same matrix $H$. Since the SSC condition in Definition \ref{def:SSC} is tested on the matrix $H$, we can suppose from now on that, equivalently, $X$ or $Y$ has an SSC decomposition. 

Fist of all, we prove that if the translation preprocessing of $X$ is operated with respect to the vector $v$ corresponding to the center of $W$, that is, $v = We/r$, then the matrix $\Theta$ polar of $P$ is the unique solution of the maximization problem \eqref{eq:firstformupolar}.  Recall that $P = U^\top [W - ve^\top]$, so $Pe = 0$. 

In a nutshell, after a preconditioning with the singular values and left singular vectors of $P$, we find that the columns of the matrix $Y$ are included in a regular and centered simplex circumscribed to the unit ball in $\mathbb R^{r-1}$, while preserving $H$ and thus the SSC property. The unit ball is self-polar, so the SSC condition forces any possible point of $\conv(Y)^*$ to lie inside the unit ball. The simple observation that any maximum volume simplex contained in the ball is necessarily regular, and that the regularity is invariant by polar transformation, concludes the proof.

\begin{theorem}
  \label{th:identifiability_for_SSC}  Let $Y$ be an $(r-1)\times n$ real matrix with $n\ge r$  such that $Y = PH$ with $P$ an $(r-1)\times r$ full rank real matrix, $Pe = 0$, and $H$ an $r\times n$  SSC and column stochastic matrix.  Then
  \begin{equation*}
       \max_{\Theta \in \mathbb{R}^{r-1 \times r}} 
\vol\big( \conv(\Theta) \big) 
\quad \text{ such that } \quad 
Y^\top \Theta \leq 1_{n \times r}. 
\tag{\ref{eq:firstformupolar}}
  \end{equation*}
 is uniquely solved by the polar matrix of $P$. 
\end{theorem}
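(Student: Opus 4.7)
My plan follows the four-step outline sketched in the paragraph preceding the theorem.

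\emph{Step 1 (Canonical form).} Since $P$ is full rank with $Pe=0$, there exists an invertible $L\in\mathbb{R}^{(r-1)\times(r-1)}$ (obtained from the SVD of $P$) such that $LP = \sqrt{r(r-1)}\,Q_{PC}$, where $M:=\begin{bmatrix}Q_{PC}\\ e^\top/\sqrt{r}\end{bmatrix}$ is orthogonal in $\mathbb{R}^{r\times r}$. Geometrically, $\conv(LP)$ is the regular simplex in $\mathbb{R}^{r-1}$ circumscribed to the unit ball $\overline B$, with polar matrix $\Theta_0 := -P/(r-1)$ whose columns lie on the unit sphere. The substitution $Y\mapsto LY$, $\Theta\mapsto L^{-\top}\Theta$ preserves the constraint $Y^\top\Theta\le 1_{n\times r}$, rescales every feasible volume by the common factor $|\det L|^{-1}$, and maps the polar of $P$ to the polar of $LP$. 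Hence I may assume WLOG that $P$ is already in canonical form.

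\emph{Steps 2 and 3 (Optimum is an inscribed regular simplex).} A direct calculation identifies $\mathcal C\cap\Delta^r$ as the insphere of $\Delta^r$ within the affine hyperplane $\{e^\top y = 1\}$: it is the Euclidean ball of radius $1/\sqrt{r(r-1)}$ centered at $e/r$, tangent to each facet of $\Delta^r$ at its centroid. Using $P^\top P = r(r-1)(I_r - ee^\top/r)$, the restriction of $P$ to $e^\perp$ is $\sqrt{r(r-1)}$ times an isometry, so $P(\mathcal C\cap\Delta^r)=\overline B$. The SSC gives $\conv(H)\supseteq\mathcal C\cap\Delta^r$, hence $\conv(Y) = P\cdot\conv(H)\supseteq \overline B$, and by self-polarity $\conv(Y)^*\subseteq\overline B$. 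Any feasible $\Theta$ therefore satisfies $\conv(\Theta)\subseteq \conv(Y)^*\subseteq\overline B$, so the objective is bounded above by the volume of the largest $(r-1)$-simplex inscribed in a ball. By the classical fact that this maximum is attained uniquely, up to orthogonal transformation, by the regular simplex inscribed in the sphere, any optimizer has the form $\Theta=Q\Theta_0 P_\sigma$ for some $Q\in O(r-1)$ and permutation $P_\sigma$; since $\Theta_0$ itself is feasible, the maximum is attained.

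\emph{Step 4 (Eliminating the rotational ambiguity).} This is the main obstacle and requires the second clause of the SSC. Absorbing $P_\sigma$, consider $\Theta=Q\Theta_0$. Lift $Q$ to $\tilde Q = \begin{bmatrix}Q & 0 \\ 0 & 1\end{bmatrix}\in O(r)$ and set $R := M^\top\tilde Q M \in O(r)$. A direct computation using $\Theta_0 = -P/(r-1)$ and $P^\top P = r(r-1)(I_r - ee^\top/r)$ produces $P^\top Q\Theta_0 = ee^\top - rR$. Column stochasticity $H^\top e = e_n$ then reduces the feasibility constraint $Y^\top\Theta = H^\top(ee^\top - rR)\le 1_{n\times r}$ cleanly to $H^\top R\ge 0$, i.e.\ $\cone(H)\subseteq \cone(R)$. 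The second clause of the SSC now forces the orthogonal matrix $R$ to be a permutation matrix $P_\sigma$, and unwinding $\tilde Q = MP_\sigma M^\top$ together with $Q_{PC}e=0$ gives $Q Q_{PC}=Q_{PC}P_\sigma$, whence $QP = PP_\sigma$ and therefore $\Theta = \Theta_0 P_\sigma$. The delicate bookkeeping is the correspondence $Q\leftrightarrow R = M^\top\tilde Q M$ between rotations of $\mathbb{R}^{r-1}$ and orthogonal matrices of $\mathbb{R}^r$ that fix the direction $e$, which is exactly what makes the feasibility constraint translate into the hypothesis of the SSC's second clause.
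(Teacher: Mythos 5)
Your proposal is correct and follows essentially the same route as the paper: precondition via the SVD so that $\conv(P)$ becomes the regular simplex circumscribing the unit ball, use the first SSC clause to confine feasible $\conv(\Theta)$ to the ball, identify the maximizers as regular inscribed simplices, and invoke the second SSC clause to reduce the residual orthogonal ambiguity to a permutation. The only cosmetic differences are that you cite the classical maximum-volume inscribed-simplex fact where the paper derives it by AM--GM on the Gram matrix, and you reduce feasibility directly to $H^\top R \ge 0$ (self-duality of $\cone(R)$) instead of the paper's polarity-reversal chain $\conv(QH)\subseteq\conv(VQ)$ --- both land on the same application of the SSC.
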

\begin{proof}
 Recall that the problem is equivalent to 
  \[
     \max_{\Theta \in \mathbb{R}^{r-1 \times r}} 
\vol\big( \conv(\Theta) \big) 
\quad \text{ such that } \quad 
 \conv(\Theta) \subseteq \conv(Y)^*. 
  \]
    Let $P = U\Sigma Q$ be the reduced SVD of $P$ where $U\in \mathbb R^{r-1\times r-1}$ is orthogonal, $\Sigma\in \mathbb R^{r-1\times r-1}$ is diagonal and invertible and $Q\in \mathbb R^{r-1\times r}$ {is such that $\left[ \begin{array}{c}
                         Q\\e^\top/\sqrt r
                        \end{array} 
                        \right]  $ is an $r\times r$ orthogonal matrix}, since $Pe=0$.  
                        Calling $\Psi =  \Sigma U^\top\Theta$, the problem transforms into
    \begin{equation}
        \label{eq:preconditioned_maxvol}  \det(\Sigma)^{-1} \max_{\Psi \in \mathbb{R}^{r-1 \times r}} 
\vol\big(  \conv( \Psi) \big) 
\quad \text{ such that } \quad 
 \conv( \Psi) \subseteq \conv(QH)^*. 
    \end{equation}
     Since $H$ is SSC, $\conv(QH) = Q\conv(H)\supset Q(\Delta^r \cap \mathcal C)$ and it is easy to prove that
    %  Moreover, since $Pe = 0$ then necessarily $Qe = 0$ and as a consequence $\begin{pmatrix}
    %     Q\\e^\top/\sqrt r
    % \end{pmatrix}$ is unitary. This is enough to prove that  
    % For any $x\in (\Delta^r \cap \mathcal C)$ it holds    \[
    % \|Qx\|^2  =  \left\|\begin{pmatrix}
    %     Q\\e^\top/\sqrt r
    % \end{pmatrix} x \right\|^2 - \frac 1r = \|x\|^2 -\frac 1r \le \frac 1{r(r-1)}
    % \]
    % and viceversa for any $y\in \mathbb R^{r-1}$ with $\|y\|^2\le \frac 1{r(r-1)}$ there exists  $x\in (\Delta^r \cap \mathcal C)$  such that $y = Qx$. In other words, we just proved that
    $Q(\Delta^r \cap \mathcal C) = \sqrt{\frac 1{r(r-1)}}B^{r-1}$, where $B^{r-1}$ is the $r-1$ dimensional unit ball. The polar of the unit ball is itself, so     \[
    \conv( \Psi) \subseteq \conv(QH)^* \subseteq  \left(\sqrt{\frac 1{r(r-1)}}B^{r-1}\right )^* = \sqrt{r(r-1)}B^{r-1}, 
    \]
and in particular all the columns of $\Psi$ are bounded in squared norm by $r(r-1)$. 
Applying the formula for the volume, we find that
\[
\vol\big(  \conv( \Psi) \big) = \frac 1{(r-1)!}\left|\det 
\left[ \begin{array}{c}
                        \Psi \\e^\top 
                        \end{array} 
                        \right]  \right|
    = \frac {r^{\frac {r-1}2}}{(r-1)!}\sqrt {\det 
    \left[ \begin{array}{c c}
                        \frac 1{\sqrt{r}}\Psi^\top & e
                        \end{array} 
                        \right]  
     \left[ \begin{array}{c}
                        \frac 1{\sqrt{r}} \Psi \\e^\top 
                        \end{array} 
                        \right]  
    } =  \frac 1{(r-1)!}\sqrt {\det R }.
\]
Each element of the diagonal in  $R $ is bounded by $r$, so its trace is at most $r^2$. Since $R$ is positive semi-definite, its determinant is bounded through the arithmetic and geometric means inequality (AM-GM) by  $\det(R) \le [\tr(R)/r]^r\le r^r$, and the equality is attained if and only if $R = rI$ or equivalently when $ \left[ \begin{array}{c}
                         \Psi/r \\e^\top/ \sqrt{r}
                        \end{array} 
                        \right]   $ is orthogonal. The matrix $\Psi = -rQ$ thus attains the maximum possible volume and $\conv(-rQ) = \conv(Q)^* \subseteq \conv(QH)^*$, so it is also a solution of problem \eqref{eq:preconditioned_maxvol}. All other $\Psi$ with the same volume such that $\conv( \Psi) \subseteq  \sqrt{r(r-1)}B^{r-1}$  are rotated versions of $-rQ$, that is, $\hat\Psi = -rVQ$, where $V$ is orthogonal, but
    \[
   \conv(-rVQ) = \conv(VQ)^* \subseteq \conv(QH)^* \implies \conv(QH)\subseteq \conv(VQ)
    \]
    \[
    \implies 
   \conv(H)\subseteq \conv\left(
    \left[ \begin{array}{c c}
                            Q^\top & e/\sqrt r
                        \end{array} 
                        \right]  
                         \left[ \begin{array}{cc}
                           V &\\& 1
                        \end{array} 
                        \right]  
                         \left[ \begin{array}{c}
                         Q\\e^\top/\sqrt r
                        \end{array} 
                        \right]  
   \right) = \conv (\Pi), 
   \]
and from SSC, $\Pi= Q^\top V Q + ee^\top /r$ is necessarily a permutation matrix. The simple observation that $\hat \Psi = -rQ\Pi$ lets us conclude that the only solutions to problem \eqref{eq:preconditioned_maxvol} are $-rQ$ and its permuted versions, or also said all possible polar matrices of $Q$. Tracing back to the original problem, we find that all possible solutions of \eqref{eq:firstformupolar} are the polar matrices of $\Theta^* = (U\Sigma^{-1}\Psi)^* = U\Sigma Q = P$. 
\end{proof}

\subsection{Separability} \label{sec:separability}

When the translation is operated with a vector $v$ different from $We/r$, the SSC property is not enough anymore to guarantee that problem \eqref{eq:firstformupolar} is solved by the polar matrix of $P$. We can thus turn to the stronger separability condition. 
In this case, whenever $v$ is in the relative interior of $\conv(X)= \conv(W)$, then the  problem \eqref{eq:firstformupolar} correctly identifies the sought matrix $\Theta$. 
The idea is very simple:   the separability is invariant by the 
preprocessing of Section~\ref{sec:preproc},   and any feasible $\Theta$ in \eqref{eq:firstformupolar} must satisfy $\conv(\Theta)\subseteq \conv(Y)^* = \conv(P)^*$ and in particular,  the polar set of $\conv(P)$ has volume larger or equal than $\conv(\Theta)$. The only case of equality is for when the columns of $\Theta$ coincide with the vertices of $\conv(P)^*$ in some order. This is enough to prove the following result.

\begin{theorem}
  \label{th:identifiability_for_separable}  Let $Y$ be a $(r-1)\times n$ real matrix with $n\ge r$  and a separable decomposition $Y = PH$ with $P$ an $(r-1)\times r$ real matrix, and $H$ an $r\times n$  column stochastic matrix containing the $r \times r$ identity matrix as a submatrix (see Section~\ref{sec:intro:separability}).  If  $0$ is in the interior of $\conv(Y)$, then
  \begin{equation*}
       \max_{\Theta \in \mathbb{R}^{r-1 \times r}} 
\vol\big( \conv(\Theta) \big) 
\quad \text{ such that } \quad 
Y^\top \Theta \leq 1_{n \times r}. 
\tag{\ref{eq:firstformupolar}}
  \end{equation*}
 is uniquely solved by the polar matrix of $P$. 
\end{theorem}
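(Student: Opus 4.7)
The plan is to use separability to collapse the polytope constraint in \eqref{eq:firstformupolar} to the simplex $\conv(P)^*$ of known vertices, and then observe that the maximum-volume $(r-1)$-simplex inscribed in an $(r-1)$-simplex is the simplex itself.

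I would first show that $\conv(Y) = \conv(P)$. Since $H$ is column stochastic, $Y = PH$ gives $\conv(Y) \subseteq \conv(P)$. Conversely, separability means $I_r$ appears as a submatrix of $H$, so each column of $P$ equals some column of $Y$, giving $\conv(P) \subseteq \conv(Y)$. Consequently $\conv(Y)^* = \conv(P)^*$, and the constraint $Y^\top \Theta \leq 1_{n\times r}$ rewrites geometrically (as in the proof of Theorem~\ref{th:identifiability_for_SSC}) as $\conv(\Theta) \subseteq \conv(P)^*$. Because $0$ lies in the interior of $\conv(Y) = \conv(P)$, the polar $\conv(P)^*$ is a bounded $(r-1)$-dimensional simplex whose $r$ vertices are exactly the columns of the polar matrix of $P$. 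Thus \eqref{eq:firstformupolar} becomes: maximize $\vol(\conv(\Theta))$ over matrices $\Theta \in \mathbb{R}^{(r-1)\times r}$ whose columns lie in the simplex $\conv(P)^*$.

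The key step is the folklore fact that among all polytopes with at most $r$ vertices contained in an $(r-1)$-simplex $S$, the unique (up to relabeling vertices) maximum-volume configuration is $S$ itself. The determinant appearing in the volume formula is multilinear in the columns of $\Theta$, so fixing all but one column reduces the problem to maximizing the absolute value of an affine function over $S$, whose optimum is attained at a vertex of $S$. Cycling through the columns, combined with the observation that repeating a vertex collapses $\vol(\conv(\Theta))$ to zero, forces the $r$ columns of any optimal $\Theta$ to be exactly the $r$ vertices of $\conv(P)^*$ in some order, i.e., the columns of the polar matrix of $P$ up to permutation.

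The main obstacle is the uniqueness part: passing from ``at least one maximizer has its columns at the vertices of $\conv(P)^*$'' to ``every maximizer does.'' I would handle this with a strict-improvement argument: if a column $\Theta(:,k)$ lies in the relative interior of a proper face of $\conv(P)^*$, then writing it as a strict convex combination of the vertices of that face and using multilinearity shows that at least one vertex yields a value of $|\det[\Theta^\top,e^\top]^\top|$ at least as large, and by perturbing slightly one obtains a strict increase unless the affine function is already constant on that face, which would contradict the positivity of the optimal volume. This pins down the optimizer as the polar matrix of $P$ up to column permutation.
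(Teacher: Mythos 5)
Your reduction is sound and matches the paper's: separability gives $\conv(P)\subseteq\conv(Y)\subseteq\conv(P)$, so the constraint becomes $\conv(\Theta)\subseteq\conv(Y)^*=\conv(P)^*=:S$, with $S$ a bounded $(r-1)$-simplex whose vertices are the columns of the polar matrix of $P$, and $S$ itself is feasible. The genuine gap is in your uniqueness step. The claim that a strict improvement is available ``unless the affine function is already constant on that face, which would contradict the positivity of the optimal volume'' is a non sequitur: if a column $\Theta(:,k)$ lies in the relative interior of a proper face $F$ of $S$, constancy of the affine map $\theta\mapsto\det\bigl[\,\theta \text{ in column }k;\ e^\top\,\bigr]$ on $F$ only says that the direction space of $F$ is parallel to the affine hull of the other $r-1$ columns; it does not force the determinant to vanish. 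For instance, with $r=4$ let $S$ be a tetrahedron and place three columns so that their plane is parallel to one edge of $S$; the determinant is then constant and nonzero as the fourth column moves along that edge, while the configuration has positive volume and the fourth column is columnwise optimal without being a vertex. So columnwise local optimality alone cannot rule out non-vertex columns, and your dichotomy does not close the uniqueness argument (you also leave out the case of a column in the interior of $S$, though that one is easy).

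The fix is the global comparison that the paper's one-line proof rests on: since $\conv(\Theta)\subseteq S$ and $S$ is feasible, any maximizer satisfies $\vol(\conv(\Theta))=\vol(S)$; two nested compact convex bodies with equal positive volume coincide, hence $\conv(\Theta)=S$; and since $S$ is an $(r-1)$-simplex with exactly $r$ vertices, each vertex, being an extreme point of $S$, must be one of the $r$ columns of $\Theta$, so the columns of $\Theta$ are exactly the vertices of $\conv(P)^*$ up to permutation. With this argument in place, your column-cycling step (essentially Lemma~\ref{ass:lemsolvert}) is not needed for this theorem.
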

\begin{proof}
    This follows directly from the fact, under the separability assumption, the solution $\conv(\Theta) = \conv(Y)^*$ is feasible and therefore is the unique solution with  
    maximum volume within $\conv(Y)^*$.  
\end{proof}

Notice that in the separable case, $v = Xe/r$ is already a good choice for the translation vector. In fact, under Assumption~\ref{ass:dimaffhull}, it can be proved that $v$ is in the interior of $\conv(X) = \conv(W)$.

\subsection{Between SSC and separability: $\eta$-expanded}  \label{sec:expanded}   

We have seen that for a SSC decomposition, we need a precise translation in the preprocessing of $X$, and instead in the separable case practically any sensible translation yields the correct solution, and we have a perfect candidate for it. To investigate what happens when the problem is not separable, but is more than SSC, we need to introduce a new concept called \textit{expansion} of the data.

\begin{definition}
\label{def:eta_expanded} We say that $H\in \mathbb R^{r\times n}$ is $\eta$-expanded with $\eta\in [0,1]$ if     \[
  \mathcal H_\eta:= \Delta_r\cap \left\{x\in \mathbb R^r\,\,\Big|\,\, x\le \left[\eta + (1-\eta)\frac 2r\right]e\right\} \subseteq \conv (H).
    \]
\end{definition}

\noindent Suppose that $X\in \mathbb R^{m\times n}$ has rank $r-1$ and admits a decomposition $X = WH$ where $H$ is column stochastic and $\eta$-expanded. The following properties are easily shown:
\begin{itemize}
    \item $\eta=1$ if and only if $X$ is separable,
    \item  if $\eta > 0$ then $H$ is SSC,
    \item  $\mathcal C\subset\cone(\mathcal H_0)$.
\end{itemize}
In other words, $0$-expansion is close to the SSC, and the property of being $\eta$-expanded bridges between SSC and separability. 
The set $\mathcal H_\eta$ can be described also as the intersection of $\Delta^r$ and $\Delta^r_\mu$ obtained by symmetrizing $\Delta^r$ with respect to its center $e/r$ and then expanding it by a constant $\mu = (r-2)\eta +1\in [1,r-1]$, as we can see in Figure \ref{fig:expansion}. In formulae, 
\begin{equation}
    \label{eq:delta_mu} 
    \mathcal H_\eta = \Delta^r\cap \Delta^r_\mu = \conv(I) \cap \conv\left( \frac {\mu +1}ree^\top-\mu I\right).
\end{equation}

\begin{figure*}[!htbp]
	\begin{minipage}[b]{0.5\linewidth}
		\centering
		\centerline{\includegraphics[trim={8cm 3cm 11cm 0cm},clip,width=8cm]{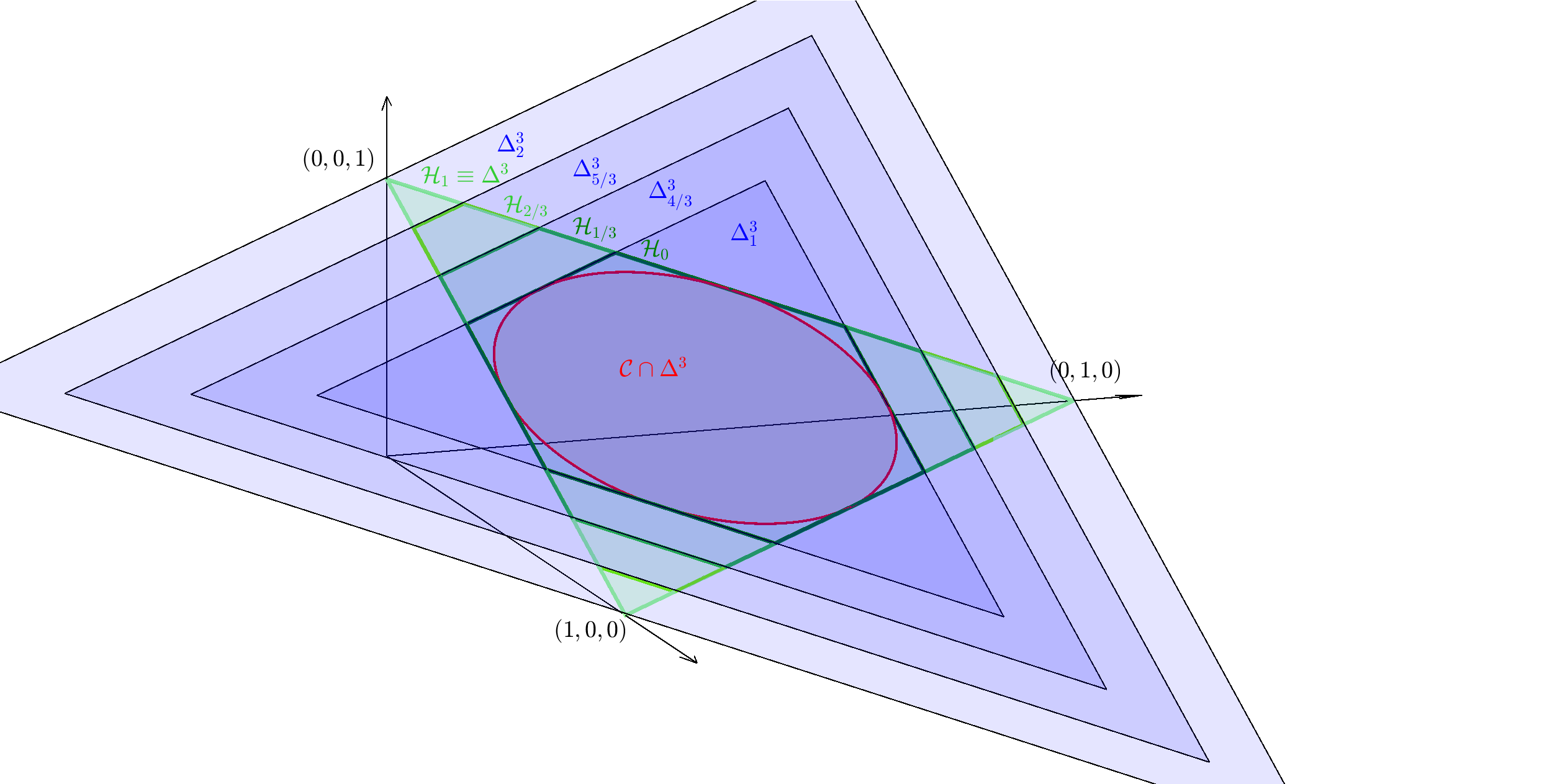}}
		%  \vspace{1.5cm}
		\center{(a) $\Delta_\mu^3$ for $\mu = 1,4/3,5/3,2$ and the associated $\mathcal H_\eta = \Delta^3\cap\Delta_\mu^3$ for $\eta = \mu-1 = 0,1/3,2/3,1$.}\medskip
	\end{minipage}
	\hfill
	\begin{minipage}[b]{0.5\linewidth}
		\centering
		\centerline{\includegraphics[trim={12cm 6cm 12cm 1cm},clip,width=8cm]{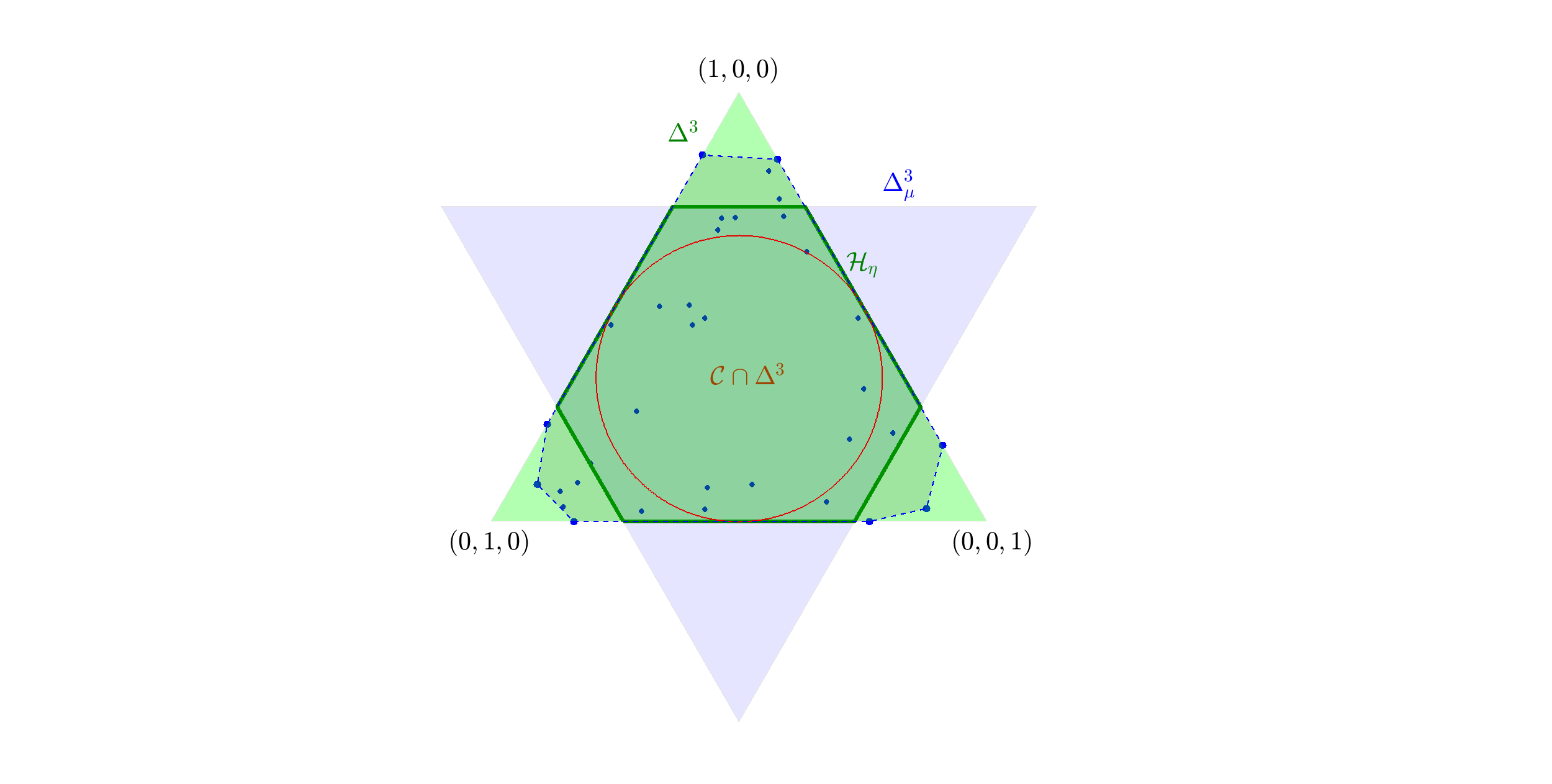}}
		%  \vspace{1.5cm}
		\center{(b) Two-dimensional projection of the columns of a $\eta$-expanded and column stochastic $H$ (dots). }\medskip
	\end{minipage}
	\caption{Visual representation in 3 and 2 dimensions for the unit simplex $\Delta^3$, the cone $\mathcal C$ intersected with $\Delta^3$, the symmetrized and expanded $\Delta_\mu^3$,  the associated $\mathcal H_\eta = \Delta^3\cap\Delta_\mu^3$ and a $\eta$-expanded $H$.}
	
	\label{fig:expansion}
\end{figure*}
\

In case of SSC, Theorem \ref{th:identifiability_for_SSC} tells us that the only certified good translation vector is $v = We/r$. Instead, in case of separability, Theorem \ref{th:identifiability_for_separable} tells us that all vectors inside the interior part of $\conv(W)$ are good, that is, any vector that can be written as $v = Wq$, where $q$ is strictly positive whose entries sum to one.  
When $H$ is column stochastic and $\eta$-expanded, we can prove that any translation vector that can be written as $v = Wq$, where $q$ is strictly positive, whose entries sum to one, and $0<q< \frac{r\eta +2(1-\eta)}{2r}e$,  yields the correct solution to problem~\ref{eq:firstformupolar}.  
To do so, we first need two lemmas that show how the polar duality behaves under translation of the polytopes and how to compute the volume of the polar matrix after such translation. We provide the proofs in Appendix~\ref{app:proofs}. 
From now on, we use $\mathcal S^\circ$ to indicate the interior of a set $\mathcal S$.  
\begin{lemma}
       \label{lem:polar_after_translation} 
Suppose the columns of $\Theta\in \mathbb R^{(r-1)\times r}$ are the vertices of the polar set of  a convex polytope $\mathcal S$.  for any $w\in \mathcal S^\circ$ suppose that $\Theta_w\in \mathbb R^{(r-1)\times r}$  are the vertices of the polar set of $\mathcal S - w$. If $z_w$ is such that  $\Theta_wz_w = 0$ and $e^\top z_w=1$, then the matrix $\Theta_w\diag(z_w)$ does not depend on $w$ and $\Theta_w = \Theta \diag(e-\Theta^\top w)^{-1}$.
 %$(z_w)_i = (1-\theta_i^\top w)(z_0)_i$. 
       
In particular, given a matrix $A\in \mathbb R^{(r-1)\times r}$ with $Ae =0$,  for any $w\in \conv(A)^{\circ}$ call $\Theta_w$ the polar of $A-we^\top$ and suppose  $At = v$ and $As = z$ with $t,s\in \Delta^r$ and $v,z\in \conv(A)^{\circ}$. Then $ \Theta_v \diag(t)= \Theta_z \diag(s)$ and  $\Theta_vt  = \Theta_zs  =  0$. 
       \end{lemma}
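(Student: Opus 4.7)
My plan is to prove the first assertion by an explicit computation of $\Theta_w$ through self-duality, and then reduce the second assertion to the first via the identification of barycentric coordinates of the origin across the primal and polar representations of a simplex.

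\textbf{First assertion.} Since $\conv(\Theta) = \mathcal S^*$ and $\mathcal S$ is a bounded polytope with $0$ in its interior, self-duality yields $\mathcal S = \{x : \Theta^\top x \leq e\}$. Translating, $\mathcal S - w = \{y : \Theta^\top y \leq e - \Theta^\top w\}$, and because $w \in \mathcal S^\circ$ the vector $e - \Theta^\top w$ is strictly positive; setting $D = \diag(e - \Theta^\top w)$ and rescaling constraints gives $\mathcal S - w = \{y : (\Theta D^{-1})^\top y \leq e\}$, so $\Theta_w = \Theta D^{-1}$. Next, let $z_0$ be the unique vector with $\Theta z_0 = 0$ and $e^\top z_0 = 1$ (which exists since $\Theta$ has full row rank and $0 \in \conv(\Theta)^\circ$ by boundedness of $\mathcal S$). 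Then $\Theta_w(Dz_0) = \Theta z_0 = 0$, and
\[
e^\top D z_0 = e^\top z_0 - w^\top \Theta z_0 = 1 - 0 = 1,
\]
so by uniqueness $z_w = Dz_0$. A direct diagonal computation shows $D^{-1}\diag(Dz_0) = \diag(z_0)$, giving $\Theta_w\diag(z_w) = \Theta\diag(z_0)$, which is independent of $w$.

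\textbf{Second assertion.} Apply the first part with $\mathcal S = \conv(A)$ (the hypothesis $Ae = 0$ places $0 \in \mathcal S^\circ$). The crux is to show that $t = z_v$. From $(A-ve^\top)t = At - v = 0$, the vector $t$ furnishes the barycentric coordinates of $0$ with respect to the \emph{primal} translated simplex. To transfer to the polar, I would derive the identity
\[
\Theta_v^\top(A - ve^\top) = ee^\top - \diag(t)^{-1},
\]
obtained by pairing the $i$-th polar vertex $\theta_i$ with the $j$-th primal vertex: $\theta_i^\top v_j = 1$ for $j\neq i$ (since $\theta_i$ cuts out the facet opposite $v_i$), while applying $\theta_i^\top$ to $\sum_j t_j v_j = 0$ gives $\theta_i^\top v_i = 1 - 1/t_i$. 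Transposing and right-multiplying by $t$ yields $(A-ve^\top)^\top \Theta_v t = e - e = 0$; injectivity of $(A-ve^\top)^\top$ (which has rank $r-1$) then forces $\Theta_v t = 0$, and combined with $e^\top t = 1$ this gives $t = z_v$. The same reasoning yields $s = z_z$, and combining with the first part finishes: $\Theta_v\diag(t) = \Theta\diag(z_0) = \Theta_z\diag(s)$ and $\Theta_v t = \Theta_z s = 0$.

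\textbf{Main obstacle.} The technical heart is maintaining a consistent indexing convention matching primal vertices with polar vertices through the ``facet opposite vertex'' correspondence; the identity $\Theta_v^\top V = ee^\top - \diag(t)^{-1}$ is the algebraic incarnation of this convention and must be set up carefully. Once this bookkeeping is in place, the rest reduces to the one-line translation formula for $\Theta_w$ together with a rank argument transferring barycentric coordinates from the primal to the polar.
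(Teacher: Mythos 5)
Your proof is correct and follows essentially the same route as the paper's: you derive $\Theta_w=\Theta\diag(e-\Theta^\top w)^{-1}$ by rescaling the facet constraints after translation, identify $z_w=\diag(e-\Theta^\top w)z_0$ to get the $w$-independence of $\Theta_w\diag(z_w)$, and for the second part use the vertex--facet pairing $\theta_i^\top v_j=1$ for $j\neq i$ together with the full column rank of $(A-ve^\top)^\top$ to force $\Theta_v t=0$. The only cosmetic difference is that you compute the diagonal entries $\theta_i^\top v_i=1-1/t_i$ explicitly, whereas the paper avoids this by noting $A_v^\top\Theta_v$ is symmetric and writing $A_v^\top\Theta_v t=\Theta_v^\top A_v t=0$.
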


\begin{lemma}
\label{lem:volume_after_multiplication}  Given  $\Theta\in \mathbb R^{(r-1)\times r}$, suppose that $\Theta w=0$ for a nonzero vector $w$ with $e^\top w \ne 0$. Then for any invertible matrix $N$,
\begin{equation}
\label{eq:volume_transformation}
     \vol\big( \conv(\Theta N) \big)  = |\det(N)|\, \left|\frac{e^\top N^{-1}w}{e^\top w}\right|\,   \vol\big( \conv(\Theta) \big). 
\end{equation}
\end{lemma}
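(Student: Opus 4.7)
The plan is to exploit the closed-form expression for the volume,
\[
\vol\bigl(\conv(\Theta)\bigr) \;=\; \frac{1}{(r-1)!}\left|\det\begin{bmatrix}\Theta\\ e^\top\end{bmatrix}\right|,
\]
and reduce the problem to a determinant identity for the $r\times r$ matrix $\bigl[\begin{smallmatrix}\Theta N\\ e^\top\end{smallmatrix}\bigr]$. First I would set $M=\bigl[\begin{smallmatrix}\Theta\\ e^\top\end{smallmatrix}\bigr]$ and observe the trivial relation $MN=\bigl[\begin{smallmatrix}\Theta N\\ e^\top N\end{smallmatrix}\bigr]$, so that $\det(MN)=\det(M)\det(N)$ controls the "wrong" matrix whose last row is $e^\top N$ rather than $e^\top$.

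The key step is to bridge these two determinants via the multilinearity of the determinant in its last row. Write, for any $v\in\mathbb R^r$,
\[
\det\begin{bmatrix}\Theta N\\ v^\top\end{bmatrix} \;=\; v^\top c,
\]
where $c\in\mathbb R^r$ is the vector of signed minors $c_j=(-1)^{r+j}\det\bigl((\Theta N)_{\hat{j}}\bigr)$ obtained by deleting the $j$-th column of $\Theta N$. The standard cofactor identity (applied by appending any row of $\Theta N$ to itself and expanding) gives $(\Theta N)c=0$, so $c$ lies in the one-dimensional kernel of $\Theta N$. Since $\Theta w=0$ implies $(\Theta N)(N^{-1}w)=0$, this kernel is spanned by $N^{-1}w$, and hence $c=\alpha\, N^{-1}w$ for some scalar $\alpha$.

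To pin down $\alpha$, I would evaluate the identity $\det\bigl[\begin{smallmatrix}\Theta N\\ v^\top\end{smallmatrix}\bigr]=v^\top c$ at $v=N^\top e$: this yields $\det(MN)=\alpha\, e^\top N N^{-1}w=\alpha\, e^\top w$, so $\alpha=\det(M)\det(N)/(e^\top w)$ (using $e^\top w\neq 0$). Then evaluating at $v=e$ gives
\[
\det\begin{bmatrix}\Theta N\\ e^\top\end{bmatrix}
\;=\; e^\top c
\;=\; \alpha\, e^\top N^{-1}w
\;=\; \det(N)\,\frac{e^\top N^{-1}w}{e^\top w}\,\det(M).
\]
Dividing both sides by $(r-1)!$ and taking absolute values gives \eqref{eq:volume_transformation}.

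The only delicate point is justifying that the cofactor vector $c$ genuinely belongs to $\ker(\Theta N)$ and thus is proportional to $N^{-1}w$; this is where the assumptions $\Theta w=0$ and $N$ invertible come in (if $\Theta$ had rank less than $r-1$ one would have $c=0$, but in that case both sides of the formula vanish and the identity still holds, so no separate argument is needed). Everything else is bookkeeping with multilinearity of $\det$, so I do not expect any further obstacle.
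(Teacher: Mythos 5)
Your proof is correct, but it takes a different route from the paper's. The paper also starts from the bordered-determinant volume formula, but then writes $\det\bigl[\begin{smallmatrix}\Theta N\\ e^\top\end{smallmatrix}\bigr]=\det\bigl[\begin{smallmatrix}\Theta\\ e^\top N^{-1}\end{smallmatrix}\bigr]\det(N)$ and absorbs the change of last row through the factorization $\bigl[\begin{smallmatrix}\Theta\\ e^\top N^{-1}\end{smallmatrix}\bigr]=\bigl[\begin{smallmatrix}\Theta\\ e^\top\end{smallmatrix}\bigr]\bigl(I+\tfrac{1}{e^\top w}we^\top(N^{-1}-I)\bigr)$, which is valid precisely because $\Theta w=0$; the extra factor is a rank-one update of the identity whose determinant is $\tfrac{e^\top N^{-1}w}{e^\top w}$ by the matrix determinant lemma. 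You instead treat the determinant as a linear functional of the last row, $v\mapsto v^\top c$ with $c$ the cofactor vector, identify $c$ as a multiple of $N^{-1}w$ via the cofactor-kernel identity $(\Theta N)c=0$, and compare the evaluations at $v=N^\top e$ and $v=e$. Both arguments are elementary and hinge on the same volume formula and on $\Theta w=0$ singling out the kernel direction; the paper's factorization has the small advantage that it needs no case distinction on the rank of $\Theta$ (the identity holds verbatim even when $\Theta$ is rank-deficient), whereas your kernel argument requires $\ker(\Theta N)$ to be one-dimensional — a gap you correctly close by noting that in the degenerate case $c=0$ and both sides of \eqref{eq:volume_transformation} vanish. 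Your evaluation trick at $v=N^\top e$ is a nice way to avoid computing any determinant of a rank-one update explicitly.
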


Now we can state and prove our result.

\begin{theorem}
 \label{thm:eta_expanded}   Suppose that $Y=PH$  with $H$ $\eta$-expanded and column stochastic and $P$ full rank. Consider the vector $q$ such that $Pq = 0$ and $e^\top q = 1$. 
   If $0<q< \frac{r\eta +2(1-\eta)}{2r}e$, then the problem 
  \begin{equation*}
       \max_{\Theta \in \mathbb{R}^{r-1 \times r}} 
\vol\big( \conv(\Theta) \big) 
\quad \text{ such that } \quad 
Y^\top \Theta \leq 1_{n \times r}. 
\tag{\ref{eq:firstformupolar}}
  \end{equation*}
  is solved uniquely by the polar matrix of $P$.  
\end{theorem}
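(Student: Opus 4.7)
The plan is to reduce the $\eta$-expanded case to the already-proven SSC case (Theorem~\ref{th:identifiability_for_SSC}) via a translation of the origin, tracking how polar matrices and volumes transform under Lemmas~\ref{lem:polar_after_translation} and~\ref{lem:volume_after_multiplication}.

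The first step is to set up the SSC-centered counterpart. Let $a := P(e/r)$ and $\tilde P := P - a e^\top$, so $\tilde P e = 0$; correspondingly, $\tilde Y := \tilde P H = Y - a e^\top$. Summing componentwise, the hypothesis $0 < q < \tfrac{r\eta + 2(1-\eta)}{2r} e$ together with $e^\top q = 1$ forces (for $r\ge 3$) $\eta > 0$, hence $H$ is SSC. Theorem~\ref{th:identifiability_for_SSC} then identifies the polar matrix $\tilde \Theta^*$ of $\tilde P$ as the unique maximizer of $\vol(\conv(\tilde\Theta))$ subject to $\tilde Y^\top \tilde\Theta \le 1_{n\times r}$.

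Next, for any feasible $\Theta$ of \eqref{eq:firstformupolar} with $0\in\conv(\Theta)^{\circ}$ and barycentric coordinates $z$, define $\tilde\Theta := \Theta\diag(e+\Theta^\top a)^{-1}$. By Lemma~\ref{lem:polar_after_translation}, its columns are the vertices of $(\conv(\Theta)^* + a)^*$, which is inscribed in $\conv(\tilde Y)^*$ because $\conv(\Theta)^* + a \supseteq \conv(Y) + a = \conv(\tilde Y)$. A short verification shows that the barycentric coordinates of $0$ in $\conv(\tilde\Theta)$ are $\tilde z_i = z_i(1 + a^\top \Theta(:,i))$, and Lemma~\ref{lem:volume_after_multiplication} then yields the key invariant
\[
\vol(\conv(\Theta)) \prod_i z_i \;=\; \vol(\conv(\tilde\Theta)) \prod_i \tilde z_i.
\]
Combining the SSC bound $\vol(\conv(\tilde\Theta)) \le \vol(\conv(\tilde\Theta^*))$ with the AM--GM estimate $\prod_i \tilde z_i \le r^{-r}$ (both tight at $\tilde\Theta^*$, whose barycentric vector is $e/r$), and noting that the same chain holds with equality for $\Theta^*$ itself, gives
\[
\vol(\conv(\Theta)) \prod_i z_i \;\le\; \vol(\conv(\Theta^*)) \prod_i q_i.
\]

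The remaining, and technically most delicate, step is to upgrade this weighted inequality to the direct bound $\vol(\conv(\Theta)) \le \vol(\conv(\Theta^*))$; equivalently, to prove $\prod_i z_i \ge \prod_i q_i$ for every feasible $\Theta$. This is where the quantitative hypothesis $q < \tfrac{r\eta + 2(1-\eta)}{2r}e$ will enter. The idea is to exploit the inclusion $P\mathcal H_\eta \subseteq \conv(Y)$: evaluating $Y^\top\Theta \le 1_{n\times r}$ at the vertices $\alpha e_k + \tfrac{1-\alpha}{r-1}(e - e_k)$ of $\mathcal H_\eta$ yields componentwise inequalities on $P^\top\Theta(:,i)$, and hence on $1 + a^\top \Theta(:,i) = \tilde z_i / z_i$. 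The gap between $\alpha$ and $2q_i$ afforded by the hypothesis should then translate, via a weighted AM--GM or convexity argument, into $\prod_i z_i \ge \prod_i q_i$ with equality iff $z = q$. Once this product inequality is established, equality throughout the chain forces $\tilde\Theta = \tilde\Theta^*$ up to permutation and $\tilde z = e/r$, which backs out to $\Theta = \Theta^*$ up to permutation, giving uniqueness. Making this product inequality rigorous via the $\eta$-expansion vertices is the main obstacle of the proof.
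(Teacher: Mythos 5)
Your reduction to the SSC case is set up correctly in its first half (modulo a sign slip: since $\tilde Y=Y-ae^\top$, the translated competitor is $\tilde\Theta=\Theta\diag(e-\Theta^\top a)^{-1}$, not $e+\Theta^\top a$), and the bookkeeping via Lemmas~\ref{lem:polar_after_translation} and~\ref{lem:volume_after_multiplication} does give the invariant $\vol(\conv(\Theta))\prod_i z_i=\vol(\conv(\tilde\Theta))\prod_i\tilde z_i$ and hence, with Theorem~\ref{th:identifiability_for_SSC} and AM--GM, the weighted bound $\vol(\conv(\Theta))\prod_i z_i\le \vol(\conv(\Theta^*))\prod_i q_i$. (Even here there is a secondary gap: AM--GM on $\tilde z$ requires $0\in\conv(\tilde\Theta)$, which feasibility does not guarantee; if an even number of the $\tilde z_i$ are negative their product can exceed $r^{-r}$.) But the decisive step is missing, and the bridge you propose is not salvageable as stated. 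The claim ``$\prod_i z_i\ge\prod_i q_i$ for every feasible $\Theta$'' is false: a feasible simplex whose interior nearly touches the origin with one facet has $\prod_i z_i$ arbitrarily close to $0$, and a feasible simplex not containing the origin can make the product negative or, with an even number of negative coordinates, arbitrarily large in either direction --- none of this is controlled by feasibility $Y^\top\Theta\le 1_{n\times r}$.

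More structurally, note that your weighted inequality is exactly the statement that the polar of $P$ maximizes the product ``dual volume times barycentric product'', i.e.\ the objective of \eqref{eq:equivalent_primal_minvol}, and you derived it using only the SSC (your argument never uses $\eta$-expansion beyond ensuring $\eta>0$, nor the quantitative bound $q<\frac{r\eta+2(1-\eta)}{2r}e$). Since the theorem's conclusion is false under SSC alone when $q\ne e/r$ (this is precisely why Theorem~\ref{th:identifiability_for_SSC} needs $Pe=0$ and why the min--max formulation of Section~\ref{sec:minmaxapproach} exists), all of the content specific to the hypothesis must be carried by the step you leave open; it cannot follow from a generic AM--GM/convexity patch applied to all feasible $\Theta$. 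The paper handles exactly this point differently: it relaxes the constraint to $\conv(\Psi)\subseteq (Q(I-qe^\top)\mathcal H_\eta)^*$, a polytope with at most $2r$ vertices $m_i$ and $\alpha_i m_i$ with $\alpha_i=1/(1-\frac{\mu+1}{rq_i})$, observes that a maximum-volume inscribed simplex must use $r$ of these vertices, and then performs a finite case analysis (two kinds of vertex selections) with explicit volume formulas from Lemma~\ref{lem:volume_after_multiplication} and a Jensen argument, where the hypothesis $q_i<\frac{\mu+1}{2r}$ enters through $|\alpha_i|$-type bounds. Your proposal would need an argument of comparable strength restricted to those competitors (or to maximizers), which is precisely what is not supplied; as written, the proof is incomplete at its central step.
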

\begin{proof}
Let\footnote{We abuse notation here since $v$ is now the translation vector in the reduced space, not in the original one.} $v:= Pe/r$ and let $P_v:= P - ve^\top$ with its SVD being $P_v = U\Sigma Q$. 
 Notice that  $Q\in \mathbb R^{r-1\times r}$ {is such that $\left[ \begin{array}{c}
                         Q\\e^\top/\sqrt r
                        \end{array} 
                        \right]  $ is an $r\times r$ orthogonal matrix}, since $P_ve=0$.  
Similarly as  the proof of Theorem \ref{th:identifiability_for_SSC},  problem \eqref{eq:firstformupolar} is equivalent to     \begin{equation}
        \label{eq:preconditioned_maxvol_extended}  \det(\Sigma)^{-1} \max_{\Psi \in \mathbb{R}^{r-1 \times r}} 
\vol\big(  \conv( \Psi) \big) 
\quad \text{ such that } \quad 
 \conv( \Psi) \subseteq \conv(Q(I - qe^\top)H)^*. 
    \end{equation}
where $\Psi =  \Sigma U^\top\Theta$ and
\[
% w = \Sigma^{-1}U^\top v = \Sigma^{-1}U^\top (ve^\top - W)q = -Qq, \quad
Q(I - qe^\top)H =( Q + \Sigma^{-1}U^\top (ve^\top-P)qe^\top)H
= \Sigma^{-1}U^\top (U\Sigma Q + ve^\top)H = \Sigma^{-1}U^\top Y.
\]
Since $\frac{r\eta +2(1-\eta)}{2r}<\frac{r\eta +2(1-\eta)}{r}$, the vector $q$ is in the interior of $\mathcal H_\eta$ and as a consequence $0=Q(I - qe^\top)q$ is in the interior of $Q(I - qe^\top)\mathcal H_\mu$.  This enables us to freely utilize the properties of the polar duality and find the necessary condition $\conv( \Psi) \subseteq (Q(I - qe^\top)\mathcal H_\eta)^* = \conv\big ((Q(I - qe^\top)\Delta^r)^*\cup(Q(I - qe^\top)\Delta^r_\mu)^*\big)$ where $\Delta_\mu^r$ is defined in \eqref{eq:delta_mu} and $\mu = (r-2)\eta +1\in [1,r-1]$. The vertices of the polytope $(Q(I - qe^\top)\mathcal H_\eta)^*$ are thus (contained in the set of)  the vertices of $(Q(I - qe^\top)\Delta^r)^*= \conv(Q - Qqe^\top)^* = \conv(M)$ and of 
$$(Q(I - qe^\top)\Delta^r_\mu)^* =   -\frac 1\mu \conv \left(Q(I - qe^\top)\left(   I - \frac {\mu +1}{r\mu }ee^\top\right)\right)^* =  -\frac 1\mu \conv\left(Q + \frac {1}{\mu }Qqe^\top\right)^*.$$ 
Notice that $-\frac {1}{\mu }Qq = Q\left(-\frac q\mu + \frac{\mu+1}\mu \frac er\right)$, so due to Lemma \ref{lem:polar_after_translation}, we get 
\begin{align*}
   (Q(I - qe^\top)\Delta^r_\mu)^* &= -\frac 1\mu \conv\left(Q - Q\left(-\frac q\mu + \frac{\mu+1}\mu \frac er\right)e^\top\right)^* 
   % =  -\frac 1\mu
   %  \conv\left(   M
   %  \diag \left( \frac{q_i}{-\frac{q_i}\mu + \frac{\mu+1}\mu \frac 1r}\right)
   %  \right)\\&
    = 
    \conv\left(   M
   \diag \left( \frac{1}{1 - \frac{\mu+1}{rq_i} }\right)
    \right).
\end{align*}
The vertices of a maximum volume simplex $\Psi$ in $(Q(I - qe^\top )\mathcal H_\eta)^*$ must correspond to $r$ of its vertices, so from the above computation can only be a column $m_i$  of $M$ or  $\alpha_i m_i$, where $\alpha_i = 1/(1 - \frac{\mu+1}{rq_i})$. If $\Psi$ has among its vertices $\{m_i,\alpha_im_i,m_j,\alpha_jm_j\}$ with $i\ne j$, then the rank of $\Psi$ is at most $r-2$ and its volume is zero. Therefore, we only need to consider the following sets of $r$ vertices $\{v_1,\dots,v_r\}$:
\begin{enumerate}
    \item $v_i \in \{1,\alpha_i  \} m_i$ for all $i$
    \item there exists exactly one index $i$ such that both $\alpha_im_i$ and $m_i$ are among the vertices. 
\end{enumerate}
Since $M$ is the polar of $Q - Qqe^\top$, Lemma    \ref{lem:polar_after_translation} says that $Mq = 0$. As a consequence, using \eqref{eq:volume_transformation},  the volume of any simplex of the first kind is 
\[
V_1 := \left|\prod_{i\in S} \alpha_i\right | \left|1 + \sum_{i\in S} q_i\left(\frac {1}{\alpha_i} -1\right) \right | \vol(\conv(M)) 
= 
\left|\prod_{i\in S}  \frac{1}{1 - \frac{\mu+1}{rq_i} } \right | \left|1 - |S|  \frac{\mu+1}{r} \right | \vol(\conv(M)), 
\]
where $S:=\{ i \,|\, v_i = \alpha_im_i\}$ and if $S$ is empty then $V_1$ is equal to $\vol(\conv(M))$. By hypothesis, $q_i< \frac{r\eta +2(1-\eta)}{2r} = \frac{\mu +1}{2r}$, so $\alpha_i<1$. As a consequence, if $|S|(\mu+1)\le 2r$ and $S$ is not empty, we find that $V_1<\vol(\conv(M))$. For   $|S|(\mu+1)> 2r$, we have 
\[
V_1 =
\prod_{i\in S}  \frac{1}{ \frac{\mu+1}{rq_i} - 1 }  \left( |S|  \frac{\mu+1}{r} -1 \right ) \vol(\conv(M)), 
\]
but thanks to Jensen Inequality applied to the concave function $f(x)=\ln{\frac 1{1/x-1}}$ with weights equal to $1/|S|$ and points $x_i= rq_i/(\mu +1)<1/2$ we get
\begin{align*}
    \prod_{i\in S}\frac {1}{   \frac {\mu +1}{rq_i} -1 }
    &= \exp\left(\sum_{i\in S} \ln{\frac {1}{  \frac{\mu +1} {rq_i} -1 } 
    }\right)
    \le  \exp\left(|S| \ln{\frac {1}{   \frac 1{\frac r{|S|(\mu +1)} \sum_{i\in S}  q_i }-1 } 
    }\right)
    % &\le  \exp\left(k \ln{\frac {1}{   \frac {k(\mu +1)}{r }-1 } 
    % }\right) 
    \le 
    \left(  \frac {1}{  |S| \frac {\mu +1}{r }-1 }    \right)^{|S|}, 
\end{align*}
and since $|S|>2r/(\mu +1)\ge 2$, we find again that $V_1<\vol(\conv(M)) $.

For the polytopes of the second kind, suppose without loss of generality that $v_1 = m_2$, $v_2=\alpha_2m_2$ and $v_i = \nu_im_i$ for $i>2$, where $\nu_i\in \{1,\alpha_i\}$. Then
\begin{align*}
  V_2 :=   \vol
    \begin{pmatrix}
        m_2& m_2\alpha_2&m_3\nu_3&\dots&m_r\nu_r
    \end{pmatrix} 
    % &= \frac 1{(r-1)!} \left|\det
    % \begin{pmatrix}
    %      m_2& m_2\alpha_2&m_3\nu_3&\dots&m_r\nu_r\\
    %     & & e^\top & &
    % \end{pmatrix}
    % \right| \\
    % &= \frac 1{(r-1)!} \left|\det
    % \begin{pmatrix}
    %     0 & m_2\alpha_2&m_3\nu_3&\dots&m_r\nu_r\\
    %     1 - \frac {1}{\alpha_2} & & e^\top  &
    % \end{pmatrix}
    % \right| \\
    %  &= \frac {\left| 1 - \frac {1}{\alpha_2}\right|}{(r-1)!} \left|\det
    % \begin{pmatrix}
    %      m_2\alpha_2&m_3\nu_3&\dots&m_r\nu_r\\
    % \end{pmatrix}
    % \right|
    = \frac {\left| \alpha_2 -1 \right| \cdot 
     \left|\prod_{k\ge 3} \nu_k  \right|}
     {(r-1)!} 
     |\det(\hat M)|, 
    %  \left( \sqrt{\frac{r}{r-1}}\right)^{r-1}
    %  \left|\det
    % \begin{pmatrix}
    %      p_2&\dots&p_r\\
    % \end{pmatrix}
    % \right| \\
\end{align*}
where $\hat M$ is the top-right $(r-1)\times (r-1)$ submatrix of $M$. Since $M$ is the polar of $Q(I - qe^\top )$, by Lemma \ref{lem:polar_after_translation} we find that $M = -Q\diag(q)^{-1}$, and if $\hat Q$ is the submatrix of $Q$ associated to $M$, then $ |\det(\hat M)| =  |\det(\hat Q)|/\prod_{i>1}q_i$ 
%and $\hat Q^\top \hat Q = I -ee^\top/r$, so $\det(\hat Q)^2 = 1/r$ 
and by \eqref{eq:volume_transformation}, 
\[\vol(\conv(M)) = \frac {\vol(\conv(Q))}{r\prod_{i}q_i}
= \frac{1}{(r-1)!}\frac {r |\det(\hat Q)|}{r\prod_{i}q_i}
=\frac{|\det(\hat M)|}{(r-1)!}\frac {1}{q_1}.
\]
If now $S:=\{ i \,|\, v_i = \alpha_im_i,\, i>2\}=\{ i \,|\, \nu_i = \alpha_i,\, i>2\}$, then $V_2$ reduces to
\[
V_2 =     \frac {({\mu+1})q_1}{({\mu+1}) - rq_2 }    
     \prod_{i\in S}  \frac{1}{ \frac{\mu+1}{rq_i} - 1 }
    \vol(\conv(M)), 
\]
 but from the hypothesis $q_i< \frac{\mu +1}{2r}$, so it is immediate to see that 
\[
  \frac {({\mu+1})q_1}{({\mu+1}) - rq_2 }   
     \prod_{i\in S}  \frac{1}{ \frac{\mu+1}{rq_i} - 1 }
     <
  \frac {(\mu +1)\frac{\mu +1}{2r} }
     {  (\mu +1) - r\frac{\mu +1}{2r} } =
     \frac{\mu +1}{r}\le 1, 
\]
and thus $V_2 <  \vol(\conv(M))$.

The polytope with the biggest volume inside of  $(Q(I - qe^\top)\mathcal H_\eta)^*$ thus coincides with the polar of $\conv(Q(I - qe^\top))$ that is in particular contained in $\conv(Q(I - qe^\top)H)^*$. The matrices $\Psi$ describing the polar of  $Q(I - qe^\top)$ are therefore the unique solutions to \eqref{eq:preconditioned_maxvol_extended}. Going back to the original problem, we see that it is solved uniquely by $\Theta = U\Sigma^{-1}\Psi$ being the polar of $U\Sigma Q(I - qe^\top) = P_v (I - qe^\top) = P$. 
\end{proof}

When $X$ is separable, that is, $H$ is $1$-expanded, Theorem \ref{th:identifiability_for_separable} says that the only condition needed for the correctness of the solution of the problem \ref{eq:firstformupolar} is   $v = Wq$, where $q$ has sum 1 and $0<q<e$. In this case, though, Theorem \ref{thm:eta_expanded} only holds for $0<q<e/2$. This suggests that the result can be improved.
\begin{conj}
    The thesis of Theorem \ref{thm:eta_expanded} holds if $q_i> \frac{1-\eta }{r}$ for every $i$.
\end{conj}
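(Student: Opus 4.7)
The plan is to mirror the proof of Theorem~\ref{thm:eta_expanded}: apply the same translation-and-SVD preprocessing, reduce to problem~\eqref{eq:preconditioned_maxvol_extended}, and enumerate the two families of candidate maximum-volume simplices inside $\bigl(Q(I-qe^\top)\mathcal H_\eta\bigr)^*$ whose vertices are drawn from $\{m_i,\alpha_i m_i\}$ with $\alpha_i = 1/(1-(\mu+1)/(rq_i))$. One only needs to replace the volume estimates so that they exploit the new lower bound $q_i>(1-\eta)/r$ rather than the upper bound $q_i<(\mu+1)/(2r)$.

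First I would verify that the admissibility conditions of the original proof still hold. Combining $q_i>(1-\eta)/r$ with $e^\top q=1$ forces $q_i < 1 - (r-1)(1-\eta)/r = (1+(r-1)\eta)/r$, and a short comparison shows $(1+(r-1)\eta)/r \le (\mu+1)/r$ with equality only at $\eta=1$. Thus $0< rq_i < \mu+1$, so every $\alpha_i$ is negative with $|\alpha_i|=rq_i/((\mu+1)-rq_i) = x_i/(1-x_i)$ where $x_i := rq_i/(\mu+1)\in(0,1)$, and the reduction to~\eqref{eq:preconditioned_maxvol_extended} together with the classification of extremal simplices proceeds word for word as in the proof of Theorem~\ref{thm:eta_expanded}.

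The core step is to show $V_1,V_2 <\vol(\conv(M))$ for every admissible configuration of indices. Writing $V_1 = \prod_{i\in S}|\alpha_i|\cdot \bigl|1-|S|(\mu+1)/r\bigr|\cdot \vol(\conv(M))$, I would pose the maximization of the leading factor over $\{q\in\Delta^r:q_i>(1-\eta)/r\}$ as a constrained extremum problem. The Jensen argument from the original proof relied on the concavity of the logit function $\log(x/(1-x))$ on $(0,1/2)$ and is no longer available, since some $x_i$ may exceed $1/2$; instead, I would exploit the separability of the objective in the free coordinates $q_i$ with $i\notin S$ and the full symmetry in the coordinates $q_i$ with $i\in S$, use Lagrange multipliers together with $\sum q_i = 1$ to reduce to a one-parameter family, and then check the bound analytically. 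The $V_2$ case is treated along the same lines via Lemma~\ref{lem:volume_after_multiplication}, after separating out the doubly-used index and controlling the resulting $|\alpha_j - 1|$ factor through the admissibility upper bound $q_j<(1+(r-1)\eta)/r$.

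The main obstacle is precisely the loss of the uniform bound $|\alpha_i|<1$ on which the original proof crucially relies: in the separability regime $\eta\to 1$, one factor $|\alpha_i|$ can become arbitrarily large, so the estimate cannot be obtained factor by factor but only through a cancellation between $\prod_{i\in S}|\alpha_i|$ and the companion scalar $\bigl|1-|S|(\mu+1)/r\bigr|$ that uses the coupling $\sum q_i=1$ in an essential way. I expect the sharp case to be when a single coordinate $q_i$ approaches its admissible maximum $(1+(r-1)\eta)/r$ while the remaining coordinates concentrate at the lower bound $(1-\eta)/r$; ruling this configuration out, and verifying that equality holds only on permutations of the polar of $P$ (so that Theorem~\ref{th:identifiability_for_separable} is recovered as $\eta\to 1$ and Theorem~\ref{th:identifiability_for_SSC} as $\eta\to 0^+$), will be the heart of the argument.
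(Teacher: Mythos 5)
This statement is left as an \emph{open conjecture} in the paper---no proof of it is given there---so your proposal has to stand entirely on its own, and as written it does not. The preparatory part is fine: the translation/SVD reduction to \eqref{eq:preconditioned_maxvol_extended}, the observation that $q_i>\frac{1-\eta}{r}$ together with $e^\top q=1$ forces $q_i<\frac{1+(r-1)\eta}{r}\le\frac{\mu+1}{r}$, so that $q$ stays in the relative interior of $\mathcal H_\eta$ and the classification of candidate maximum-volume simplices into the two kinds with vertices drawn from $\{m_i,\alpha_i m_i\}$ carries over verbatim from the proof of Theorem~\ref{thm:eta_expanded}. But the entire content of the conjecture is the pair of strict bounds $V_1<\vol(\conv(M))$ and $V_2<\vol(\conv(M))$ for \emph{every} admissible $q$ and every nonempty configuration of indices, and precisely there your text substitutes intentions for an argument (``use Lagrange multipliers \dots and then check the bound analytically'', ``ruling this configuration out \dots will be the heart of the argument''). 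You correctly diagnose that the paper's estimate hinges on $|\alpha_i|<1$, i.e.\ $x_i=rq_i/(\mu+1)<1/2$, which the new hypothesis no longer provides, and that one must instead exploit a cancellation between $\prod_{i\in S}|\alpha_i|=\prod_{i\in S}\frac{x_i}{1-x_i}$ and the coupling $\sum_i x_i=\frac{r}{\mu+1}$; but no such cancellation is actually carried out, not even in a representative case (say $|S|=2$, or the second-kind simplices where the extra factor $\frac{q_1}{1-x_2}$ enters). Identifying where the old proof breaks is not the same as repairing it, and since the authors themselves only conjecture the statement, the missing estimates are exactly the nontrivial part.

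A secondary gap: your claim that the sharp configuration is ``one coordinate at its admissible maximum, the rest at the lower bound'' is a heuristic, not a reduction you are entitled to. After taking logarithms the objective involves $\ln\frac{x}{1-x}$, which is concave only on $(0,1/2)$ and convex on $(1/2,1)$---this is why the Jensen step fails in the first place---so a Lagrange/KKT analysis does not automatically push the maximizer to that boundary pattern, and since the admissible set $\{q>\frac{1-\eta}{r}e,\ e^\top q=1\}$ is relatively open, the supremum need not be attained; one must prove a strict uniform bound on the closure and then argue equality can only occur at excluded boundary points. Until the inequalities for both families of candidates (together with the uniqueness claim in the thesis of Theorem~\ref{thm:eta_expanded}) are established under $q_i>\frac{1-\eta}{r}$, what you have is a plausible research plan consistent with the paper's framework, not a proof of the conjecture.
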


%\ngc{Shouldn't we also give the result with the min-max formulation with $v$ and $\Theta$ as variables? I think this is a nice contribution, too. In fact, it shows we can 'optimize' the $v$ with a meaningful criterion, and not try to 'guess' a good position.} 

\subsection{Min-max approach under the SSC} \label{sec:minmaxapproach}

Under the SSC condition, we have proved that the solution to problem \eqref{eq:firstformupolar} coincides with the SSC decomposition $Y = PH$ when  $Pe=0$. 
In the case that $v:=Pe/r\ne 0$  one would need to translate $Y$ by $v$ before solving problem \eqref{eq:firstformupolar}, so that $Y - ve^\top = (P-ve^\top) H$ and the resulting solution $\Theta$ would coincide with the polar set of $P-ve^\top$.  
Since $v$ is not generally known beforehand, we inquire what happens when we translate by a different vector $w$. We find that that the solution $\Theta_w$ of \eqref{eq:firstformupolar} applied to the matrix $Y-we^\top$ has always a strictly larger volume than the correct solution $\Theta\equiv \Theta_v$, and the volume of $\Theta_w$ is actually a convex function in $w$. 

%\ngc{Abuse of notation: we used $v$ for the translation in the original space, not the reduced one... we might want to change. }

\begin{theorem} \label{th:minmaxformu}
    Let $Y$ be an $(r-1)\times n$ real matrix with $n\ge r$  such that $Y = PH$ with $P$ an $(r-1)\times r$ full rank real matrix and $H$ an $r\times n$  SSC and column stochastic matrix.  If     \begin{equation}
       \mathcal V(w):=\sup_{\Theta \in \mathbb{R}^{r-1 \times r}} 
\vol\big( \conv(\Theta) \big) 
\quad \text{ such that } \quad 
(Y-we^\top)^\top \Theta \leq 1_{n \times r}, 
\label{eq:translated_polar}
  \end{equation}
  for any vector $w\in \mathbb R^{r-1}$, then $\mathcal V(w)$ is a convex function with unique minimum at $w = v = Pe/r$.  
\end{theorem}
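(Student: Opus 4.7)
My strategy is to rewrite $\mathcal V(w)$ as a pointwise maximum of finitely many explicit log-convex functions, and then use Theorem~\ref{th:identifiability_for_SSC} to certify which of these functions is active at $w=v$. For $w\in\conv(Y)^\circ$, the constraint of~\eqref{eq:translated_polar} says each column of $\Theta$ lies in $K_w:=(\conv(Y)-w)^*$, while outside $\conv(Y)^\circ$ the set $K_w$ is unbounded and $\mathcal V(w)=+\infty$ (consistent with convexity). Since $\vol(\conv(\Theta))$ is the absolute value of an affine function of each column, the maximum over $\Theta$ with columns in the polytope $K_w$ is attained when every column is a vertex of $K_w$. If $\xi_1,\dots,\xi_m$ are the vertices of $\conv(Y)^*$, then (by the same argument as in Lemma~\ref{lem:polar_after_translation}) the vertices of $K_w$ are $\xi_j/(1-\xi_j^\top w)$, so
\[
\mathcal V(w)=\max_{S\subseteq\{1,\dots,m\},\,|S|=r}v_S(w),\quad v_S(w):=\vol\bigl(\conv(\{\xi_j/(1-\xi_j^\top w):j\in S\})\bigr).
\]

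\textbf{Convexity.} For fixed $S$, let $\Xi^S:=[\xi_j]_{j\in S}$ and $D_w:=\diag(1/(1-\xi_j^\top w))_{j\in S}$. A single row operation (adding $w^\top$ times the first $r-1$ rows to the last row) gives
\[
\det\begin{pmatrix}\Xi^S D_w\\ e^\top\end{pmatrix}=\det(D_w)\det\begin{pmatrix}\Xi^S\\ e^\top-w^\top\Xi^S\end{pmatrix}=\det(D_w)\det\begin{pmatrix}\Xi^S\\ e^\top\end{pmatrix},
\]
so $v_S(w)=v_S(0)\prod_{j\in S}(1-\xi_j^\top w)^{-1}$ and $\log v_S(w)=\log v_S(0)-\sum_{j\in S}\log(1-\xi_j^\top w)$ is convex in $w$ (each summand has PSD Hessian $\xi_j\xi_j^\top/(1-\xi_j^\top w)^2$). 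Hence $\log\mathcal V=\max_S\log v_S$ is convex as a pointwise maximum of convex functions, so $\mathcal V$ is log-convex and in particular convex.

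\textbf{Unique minimum at $v:=Pe/r$.} Since $(P-ve^\top)e=0$, Theorem~\ref{th:identifiability_for_SSC} applied to $Y-ve^\top=(P-ve^\top)H$ yields that at $w=v$ the maximum in~\eqref{eq:translated_polar} is attained uniquely by $\Theta^*:=$ polar matrix of $P-ve^\top$. Under the SSC, each of the $r$ facets of $\conv(P)$ contains $r-1$ affinely independent points of $\{Y(:,j)\}$ (via the zeros that SSC forces in each row of $H$), so each is a facet of $\conv(Y)$, indexing a subset $S^*\subseteq\{1,\dots,m\}$ of size $r$; therefore $\Theta^*$ has columns $\xi_j/(1-\xi_j^\top v)$, $j\in S^*$, and $\mathcal V(v)=v_{S^*}(v)$. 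The gradient of $\log v_{S^*}$ at $w$ is $\sum_{j\in S^*}\xi_j/(1-\xi_j^\top w)=\Theta^{S^*}_w e$, and at $w=v$ this equals $\Theta^*e$, which vanishes by Lemma~\ref{lem:polar_after_translation} applied to $A=P-ve^\top$ (since $Ae=0$ gives $\Theta^*(e/r)=0$). So $v$ is a stationary point of the strictly convex function $\log v_{S^*}$ (strict convexity from $\Xi^{S^*}$ having rank $r-1$), and hence its unique global minimizer. For $w\ne v$, $\mathcal V(w)\ge v_{S^*}(w)>v_{S^*}(v)=\mathcal V(v)$, proving uniqueness.

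\textbf{Main obstacle.} The delicate part is the coupling in Step~3: using the SSC to pin down the combinatorial subset $S^*$ for which $\mathcal V(v)=v_{S^*}(v)$, and matching the first-order condition of $\log v_{S^*}$ to the polarity identity $\Theta^*e=0$ supplied by Lemma~\ref{lem:polar_after_translation}. The convexity step relies only on the polar-translation vertex formula and the standard fact that the maximum of a convex function over a polytope is attained at a vertex; the nontrivial content is that the local certificate from Theorem~\ref{th:identifiability_for_SSC} combines with global convexity of $\log\mathcal V$ to yield a global uniqueness statement.
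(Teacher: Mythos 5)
Your proof is correct in substance and, for the convexity half, follows the same route as the paper: you reduce $\mathcal V(w)$ to a finite maximum over $r$-subsets of vertices of the translated polar (the paper does this via Lemma~\ref{ass:lemsolvert} and the polar matrix of $Y-we^\top$), and you show each piece is a constant times $\prod_j(\text{affine}_j(w))^{-1}$, hence convex; the paper proves convexity of exactly these pieces by a direct Jensen-type estimate, so your log-convexity packaging is just a slightly cleaner version of the same fact. Where you genuinely diverge is the uniqueness of the minimizer. The paper does not use convexity there at all: it notes that the polar matrix of $P-we^\top$ is always feasible for \eqref{eq:translated_polar}, computes its volume via Lemma~\ref{lem:polar_after_translation} and Lemma~\ref{lem:volume_after_multiplication} as $\vol(\conv(\tilde\Theta_v))/(r^r\prod_i t_i)$ with $Pt=w$, $t\in\Delta^r$, and concludes $\mathcal V(w)\ge \mathcal V(v)$ by AM--GM, with equality only at $t=e/r$. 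You instead pin down the active piece $v_{S^*}$ at $w=v$ through Theorem~\ref{th:identifiability_for_SSC}, verify the first-order condition $\nabla \log v_{S^*}(v)=\Theta^* e=0$ via Lemma~\ref{lem:polar_after_translation}, and finish by strict convexity of that single piece. Both work; the paper's argument is more self-contained (it never needs to know which vertices of the data polar the optimal $\Theta^*$ occupies), while yours yields a first-order certificate at $v$ that fits naturally with the min--max algorithm.

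Two points in your write-up need patching. First, you anchor everything at the vertices $\xi_j$ of $\conv(Y)^*$, which presumes $0\in\conv(Y)^\circ$; the theorem does not assume this (choosing the right translation is the whole point), and if $0\notin\conv(Y)^\circ$ then $\conv(Y)^*$ is unbounded and the enumeration of the vertices of $(\conv(Y)-w)^*$ as $\xi_j/(1-\xi_j^\top w)$ breaks down. The fix is what the paper does: apply the translation formula of Lemma~\ref{lem:polar_after_translation} relative to a point known to lie in $\conv(Y)^\circ$, such as $\ell=Ye/n$, and replace $w$ by $w-\ell$ throughout; your determinant identity and gradient computation then go through verbatim. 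Second, your identification of $S^*$ relies on the claim that every facet of $\conv(P)$ contains $r-1$ affinely independent columns of $Y$, justified only by ``the zeros that SSC forces in each row of $H$''; $r-1$ zeros per row gives $r-1$ points on each facet but not their affine independence, so as written this step is not established (the claim is in fact true under the SSC, but it requires an argument using the tangency of the inscribed ball, not just the zero count). It is also avoidable: Theorem~\ref{th:identifiability_for_SSC} gives uniqueness of the maximizer at $w=v$, and your Step 1 (equivalently Lemma~\ref{ass:lemsolvert}) shows some maximizer has all its columns at vertices of $(\conv(Y)-v)^*$; hence the columns of $\Theta^*$ must already be such vertices, which is all your argument actually needs.
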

\begin{proof}
If $w\not\in \conv(Y)^\circ$, then $\conv(Y)^*$ is unbounded and $\mathcal V(w)=\infty$, so from now on we suppose   $w\in \conv(Y)^\circ\subseteq \conv(P)^\circ$. We can now rewrite the problem as 
  \[
    \mathcal V(w) = \sup_{\Theta \in \mathbb{R}^{r-1 \times r}} 
\vol\big( \conv(\Theta) \big) 
\quad \text{ such that } \quad 
 \conv(\Theta) \subseteq \conv(Y-we^\top)^*. 
  \]
The polar matrix $\Psi_w$ of $Y-we^\top$ represents a polytope, so $\mathcal V(w)$ will be the volume of a simplex $\conv(\Theta_w)$ whose vertices are a $r$-subset of the $s$ columns of $\Psi_w$, as we show in Lemma~\ref{ass:lemsolvert} in the Appendix. 
The maximum is thus achieved by one out of $\binom{s}{r}$  simplices $\Theta_{w}^{(i)}$, 
and we can recast the problem as    
 \[
    \mathcal V(w) = \max_{i=1,\dots,\binom sr} 
\vol\big( \conv(\Theta_w^{(i)}) \big)
\quad \text{ such that } \quad 
\Theta_w^{(i)} = \Psi_w I^{(i)}_{s\times r}, 
  \]
where $\{I^{(i)}_{s\times r}\}_{i=1,\dots,\binom sr}$ are all the possible full rank, binary and column stochastic matrices of size $s\times r$. Since each $\Theta_w^{(i)}$ represents $r$ linear constraints of $\conv(Y-we^\top)^*$, then its polar set $\mathcal S_w^{(i)}$ is just the $w$-translated of a fixed  
(and possibly unbounded) polytope with $r$ facets containing $\conv(Y)$. If we now fix the vector $\ell = Ye/n\in \conv(Y)^\circ$, then by Lemma \ref{lem:polar_after_translation},
\[
 \mathcal V_i(w):=\vol(\conv(\Theta_w^{(i)})) = \vol(\conv(\Theta_\ell^{(i)})\diag(e-(\Theta_\ell^{(i)})^\top (w-\ell))^{-1}) = \frac{\vol(\conv(\Theta_\ell))}{\prod_j [e - (\Theta_\ell^{(i)})^\top(w-\ell)]_j} . 
\]
Notice now that $-\ln(x)$ and $e^x$ are both convex functions, so we can prove that  $\mathcal V_i(w)$ is also a convex function. In fact $[e - (\Theta_\ell^{(i)})^\top(w-\ell)]_j> 0$  for any $w\in \conv(Y)^\circ$ and any $j$,  so for any $\lambda\in [0,1]$ and any couple of points $w_1,w_2\in \conv(Y)^\circ$, 
\begin{align*}
    \mathcal V_i&(\lambda w_1 + (1-\lambda) w_2) =  
    \frac{\vol(\conv(\Theta_\ell))}{\prod_j [e - (\Theta_\ell^{(i)})^\top(\lambda w_1 + (1-\lambda) w_2-\ell)]_j}\\
    &=
    \frac{\vol(\conv(\Theta_\ell))}{\prod_j \lambda [e - (\Theta_\ell^{(i)})^\top( w_1 -\ell)]_j
 +  (1-\lambda) [e - (\Theta_\ell^{(i)})^\top( w_2-\ell)]_j}\\
 % &= \vol(\conv(\Theta_\ell)) \exp\left(
 % \sum_j -\ln\left(
 % \lambda [e - (\Theta_\ell^{(i)})^\top( w_1 -\ell)]_j
 % +  (1-\lambda) [e - (\Theta_\ell^{(i)})^\top( w_2-\ell)]_j
 % \right) 
 % \right)\\
  &\le  \vol(\conv(\Theta_\ell)) \exp\left(
 -\lambda\sum_j \ln\left(
  [e - (\Theta_\ell^{(i)})^\top( w_1 -\ell)]_j\right) 
  -(1-\lambda)\sum_j \ln\left(   [e - (\Theta_\ell^{(i)})^\top( w_2-\ell)]_j
 \right) 
 \right)\\
  &\le  \vol(\conv(\Theta_\ell)) \left(
 \lambda 
 \prod_j \frac 1{[e - (\Theta_\ell^{(i)})^\top( w_1 -\ell)]_j}
  +(1-\lambda) 
  \prod_j \frac 1{[e - (\Theta_\ell^{(i)})^\top( w_2 -\ell)]_j}
 \right) \\
 &=  \lambda \mathcal V_i( w_1) 
 +  (1-\lambda) \mathcal V_i( w_2).
\end{align*}
The function $\mathcal V(w)$ is now the maximum of convex functions, so it is also convex.

Since  $Y - we^\top = (P-we^\top) H$, we have that the polar matrix $\tilde\Theta_w$ of $P-we^\top$ satisfies \eqref{eq:translated_polar}, so by Lemma \ref{lem:polar_after_translation} and  \eqref{eq:volume_transformation}, 
$$
\mathcal V(w)\ge \vol(\conv(\tilde\Theta_w)) =\vol(\conv(\tilde\Theta_v\diag(1/rt_i))) 
=  \frac {\vol(\conv(\tilde\Theta_v))}{r^r\prod_i t_i},   
$$
where $Pt = w$ and $t\in \Delta^r$. A simple application of AM-GM tells us that  $\prod_i t_i \le 1/r^r$. 
We know by Theorem \ref{th:identifiability_for_SSC} that $\mathcal V(v) = \vol(\conv(\tilde\Theta_v))$, so we conclude that 
    $$\mathcal V(w)\ge  \frac {\vol(\conv(\tilde\Theta_v))}{r^r\prod_i t_i} \ge \mathcal V(v)$$
with equality only if $t_i = 1/r$ for every $i$, i.e., $w = Pe/r=v$.  
\end{proof}

Theorem~\ref{th:minmaxformu} and Theorem \ref{th:identifiability_for_SSC} imply the following.  

\begin{corollary} \label{cor:minmax}
    Let $Y$ be an $(r-1)\times n$ real matrix with $n\ge r$  such that $Y = PH$ with $P$ an $(r-1)\times r$ full rank real matrix and $H$ an $r\times n$  SSC and column stochastic matrix. Then 
    \begin{equation}
      % \mathcal V(w):=
      \inf_{w\in\mathbb R^{r-1}} \sup_{\Theta \in \mathbb{R}^{r-1 \times r}} 
\vol\big( \conv(\Theta) \big) 
\quad \text{ such that } \quad 
(Y-we^\top)^\top \Theta \leq 1_{n \times r}, 
\label{eq:minmax_volume}
  \end{equation}
  is solved uniquely by $w =  Pe/r$ and $\Theta$ being the polar matrix of $P-we^\top$.  
\end{corollary}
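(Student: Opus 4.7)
The plan is to derive the corollary as an immediate consequence of the two preceding theorems by first resolving the outer infimum and then the inner supremum, since neither requires any new machinery.

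First, I apply Theorem~\ref{th:minmaxformu} directly to the function
\[
\mathcal V(w) \;=\; \sup_{\Theta\in\mathbb R^{r-1\times r}}\bigl\{\vol(\conv(\Theta)) \;:\; (Y-we^\top)^\top\Theta \le 1_{n\times r}\bigr\}.
\]
The theorem says $\mathcal V$ is convex, finite on $\conv(Y)^\circ$, infinite outside, and attains its unique minimum at $w^* = Pe/r$. This settles the outer variable.

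Second, I argue that once $w$ is fixed at $w^*$, the inner supremum is covered verbatim by Theorem~\ref{th:identifiability_for_SSC}. Set $Y' := Y - w^*e^\top$ and $P' := P - w^*e^\top$, so that $Y' = P'H$ with $H$ still SSC and column stochastic. By construction $P'e = Pe - r w^* = 0$, which is the centering hypothesis of Theorem~\ref{th:identifiability_for_SSC}. The only item to check is that $P'$ remains full rank: since the column differences satisfy $P'(:,i) - P'(:,j) = P(:,i) - P(:,j)$ and the latter span $\mathbb R^{r-1}$ (because $P$ has rank $r-1$ and lives in $\mathbb R^{r-1}$), the columns of $P'$ also span $\mathbb R^{r-1}$. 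Therefore Theorem~\ref{th:identifiability_for_SSC} applies to $Y'$ and yields that the inner supremum at $w = w^*$ is uniquely attained by $\Theta^*$, the polar matrix of $P - w^* e^\top$.

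Combining the two steps, both components of the optimum in \eqref{eq:minmax_volume} are uniquely determined, giving $(w,\Theta) = (Pe/r,\,\Theta^*)$ as claimed. I do not foresee any obstacle here: the heavy lifting is done by Theorems~\ref{th:minmaxformu} and~\ref{th:identifiability_for_SSC}, and the one additional point a careful reader might query, namely the preservation of full rank under the translation $P\mapsto P - w^*e^\top$, is elementary.
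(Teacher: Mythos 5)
Your proposal is correct and follows essentially the same route as the paper, which offers no separate argument beyond noting that Theorems~\ref{th:minmaxformu} and~\ref{th:identifiability_for_SSC} together give the corollary: the former pins down $w = Pe/r$ as the unique minimizer of $\mathcal V$, and the latter, applied to $Y - we^\top = (P - we^\top)H$ with $(P-we^\top)e = 0$, pins down $\Theta$ as the polar matrix of $P - we^\top$. Your extra verification that $P - (Pe/r)e^\top$ stays full rank is the right point to flag, although the stated justification is slightly loose: rank $r-1$ of $P$ alone does not force the column differences to span $\mathbb{R}^{r-1}$ (the columns could lie in a lower-dimensional affine subspace missing the origin); what one really uses is that $\conv(P)$ is a full-dimensional simplex, which is implicit throughout this section since otherwise $\mathcal V \equiv \infty$ and already Theorem~\ref{th:minmaxformu} would have no meaningful minimizer.
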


Corollary~\ref{cor:minmax} incites us to update the translation vector $v$ and the solution $\Theta$ using a min-max approach: $v$ should be chosen to minimize the volume, while $\Theta$ to maximize it. This is described in the next section. 
It is interesting to note that the min-max approach would converge in one iteration under the separability condition (since any $w$ in the convex hull of $Y$ leads to the sought $\Theta$; see Theorem~\ref{th:identifiability_for_separable}), while the set of $v$'s that lead to identifiability typically contains more than the point $Pe/r$, as shown in Theorem~\ref{thm:eta_expanded} when $H$ is $\eta$-expanded. In practice, we will see that alternating minimization of $v$ and $\Theta$ typically converges within a few iterations.

\section{Optimization} \label{sec:optim}

Let us first assume that the translation vector, $v$, is fixed. 
In the presence of noise, we propose to consider the following 
%Extending the concept of maximizing volume in the dual space for noisy data is straightforward. The problem in 
formulation: 
% \begin{equation} \label{eq:secondformupolar} 
%     \max_{\Theta \in \mathbb{R}^{r-1 \times r}} 
% \vol\big( \conv(\Theta) \big) -\lambda \|\Delta\|_F^2 
% \quad \text{ such that } \quad 
% Y^\top \Theta \leq 1_{n \times r} + \Delta.
% \end{equation}
% Here, $\Delta$ belongs to $\mathbb{R}^{(r-1) \times n}$ and represents the noise matrix, while $\lambda >0$ serves as a regularization parameter. 
% Instead of computing $\vol \big( \conv (\Theta)\big)$ as $\left| \det  \left[ \begin{array}{c}
%                         \Theta \\
%                         e^\top 
%                         \end{array} 
%                         \right] \right|$, which is not differentiable, we use the following optimization problem:                 
\begin{equation}\label{eq:thirdformupolar}
	\max_{Z,\Theta, \Delta}  \quad \det(Z)^2 - \lambda \|\Delta\|_F^2 \quad 
	\text{ such that }  \quad  
 Z = \left[ \begin{array}{c}
                        \Theta \\
                        e^\top
                        \end{array} \right] 
                        \text{ and }
	 \quad Y^\top \Theta \leq {1_{n \times r}} + \Delta. 
\end{equation}
The matrix $\Delta$ belongs to $\mathbb{R}^{(r-1) \times n}$ and represents the noise matrix, while $\lambda >0$ serves as a regularization parameter. Moreover, we have squared the volume of $\conv (\Theta)$ in the objective to make it smooth (getting rid of the absolute value). 
In this problem, the objective function is nonconcave, however, all the constraints are linear. Inspired by the work of \cite{huang2019detecting}, we use the block successive upperbound minimization (BSUM) framework~\cite{razaviyayn2013unified} and iteratively update the columns $\Theta$.

Using the co-factor expansion within Laplace formula, we express $\det(Z)$ as a linear function of the entries in any $k$-th column:
%\[ 
$\det(Z) = \sum_{j=1}^r (-1)^{j+k}Z(j,k) \det(Z_{-j,-k})$, 
%\] 
where $Z_{-j,-k}$ is obtained by removing the $j$-th row and $k$-th column from $Z$. 
%\ngc{I found the following derivations 'odd'. I think it is simpler to explain in the following way, let me know what you think. 
If we fix all columns of $Z$ but the $k$th, we have 
\[
\det(Z) = {f^{(k)}}^\top Z(:,k), 
\text{ where }  
{f^{(k)}}(j) = (-1)^{j+k} \det(Z_{-j,-k}) 
\quad \text{ for } j=1,\dots,r.
\]
For simplicity, let us denote $c = {f^{(k)}}$ and $x = Z(:,k)$. We want to maximize 
$f(x) = \det(Z)^2 = \big( {f^{(k)}}^\top Z(:,k)\big)^2 = (c^\top x)^2$. The function $f(x) = x^\top (cc^\top) x$ is a convex quadratic function that can be lower bounded by its first-order Taylor approximation, that is, for any $x_0$, 
\[
f(x) = (c^\top x)^2 \geq f(x_0) + \nabla f(x_0)^\top (x-x_0) 
= 
 2 \big[cc^\top x_0 \big]^\top x + 
(c^\top x_0)^2 
= 
d^\top x + \text{ constants}, 
\]
since $\nabla f(x_0) = 2 (cc^\top) x_0$, where 
$d = 2 cc^\top x_0^\top = 2 {f^{(k)}} {f^{(k)}}^\top x_0 = \alpha {f^{(k)}}$, 
where $x_0$ is the previous value of $Z(:,k)$ (from previous iteration), and $\alpha = 2 {f^{(k)}}^\top x_0 = 2 \det(Z)$. Hence we have a ``minorizer'' of $\det(Z)^2$ as a function of $x=Z(:,k)$ around $x_0$. 

Per this inequality, the iterative maximization of $\det(Z)^2$ involves sequentially updating columns of $Z$ and optimizing the lower-bound expression for each column of $Z$ until convergence. 
In each iteration, individual columns of $Z$ (and $\Theta$) are updated by considering every other column as fixed and solving a quadratic programming problem of the form (for $k=1,..,r$):
\begin{equation} \label{eq:optmprob1}
	\max_{t,\Theta(:,k), \Delta(:,k)} \alpha {f^{(k)}}^\top t - {\lambda}  ||\Delta(:,k)||_2^2  \quad \text{ such that }   \quad t = \left[ \begin{array}{c}
                        \Theta(:,k) \\
                        1
                        \end{array} \right] \text{ and } 
	  Y^\top \Theta(:,k) \leq {1_{(r-1) \times 1}}+\Delta(:,k). 
\end{equation}
However, this optimization problem alone is insufficient to guarantee the boundedness of the corresponding simplex in the primal space. The columns in $\Theta$ define a bounded simplex in $\mathbb{R}^{r-1}$ if and only if the positive hull of $\Theta$ spans $\mathbb{R}^r$, or equivalently if $0$ is in the interior of its convex hull.  Consequently, we add the constraint to the problem above 
%\[
 $\Theta(:,k)=-\sum_{i \neq k} \alpha_i \Theta(:,i)$ with $\alpha_i \geq \epsilon$     
%\]
for some small $\epsilon > 0$. We will use $\epsilon = 0.01$. 

% in each optimization step, the following equation must be fulfilled to ensure the boundedness of the primal simplex associated with $\Theta$:
% \[\alpha_1 \Theta(:,1)+\dots+\alpha_r \Theta(:,r)=0,\qquad \alpha_j>0, \quad \text{for } j=1,\dots,r\]
% %In this paper, we solve this problem using \textit{quadprog} function of Matlab.
% Adding this constraint, we reach the final optimization problem:
% \begin{align}\label{eq:optmprob2}
% 	\max_{t,\Theta(:,k), \Delta(:,k),\alpha} & \quad {f^{(k)}}^\top t -\lambda ||\Delta(:,k)||_2^2 \\
% 	s.t. &  \quad t = \left[ \begin{array}{c}
%                         \Theta(:,k) \\
%                         1
%                         \end{array} \right]  \nonumber\\
% 	& \quad Y^\top \Theta(:,k) \leq \mathbf{1_{(r-1)}}+\Delta(:,k), \nonumber \\
%         & \quad  \Theta(:,k)=-\sum_{i \neq k} \alpha_i \Theta(:,i),\qquad \alpha_i>0, \nonumber
% \end{align}

Similar to \cite{huang2019detecting}, we use a numerical trick to define the vector ${f^{(k)}}$ as the columns of $Z^{-1}$. This is based on Carner's rule and helps to avoid round-off errors. %\gbc{Notice that if the optimal $\Theta$ of \ref{eq:optmprob1} has an unbounded primal, then there doesn't exist a maximum for problem \ref{eq:optmprob2}, but only a sup because the condition $\alpha > 0$ is not closed. In this case maybe it's better to keep $\alpha\ge 0$, since anyway the problem will converge to a solution with some $\alpha_i = 0$.}

% \paragraph{Computing vertices of the primal simplex}
% Every column of the resulting matrix $\Theta$ represents a vertex of the dual simplex, corresponding to a facet of the convex hull of the samples in the primal space. Therefore, the $r$ vertices of the estimated enclosing simplex in the primal space are reconstructed by determining the intersection of $r-1$ facets from the $r$ possible combination of facets in the solution set. Specifically, we initially recover $\hat{W} \in \mathbb{R}^{(r-1) \times r}$ by solving the following linear system of equations for each $j \in {1,\dots,r}$:
% \begin{align}\label{eq:estimateW}
% 	\begin{cases}
% 		\hat{W}(:,j)^\top \Theta(:,1) &= 1 \\
% 		\dots &\\
% 		\hat{W}(:,j)^\top \Theta(:,j-1) &= 1 \\
% 		\hat{W}(:,j)^\top \Theta(:,j+1) &= 1 \\
% 		\dots &\\
% 		\hat{W}(:,j)^\top \Theta(:,r) &= 1 \\
% 	\end{cases}
% \end{align}
% The vertices of the estimated original simplex in $r$-dimension are then calculated by:
% \[W(:,j) = U\hat{W}(:,j) + v \quad j=1,\dots,r\]

 \paragraph{Initializing and updating the translation vector $v$} 

 As explained in details in Section~\ref{sec:identif}, the choice of the translation vector $v$ in the preprocessing step, $Y = U^\top (X - ve^\top)$, is crucial for the identifiability of SSMF via volume maximization in the dual. The best choice for $v$ is $We/r$ but it is unknown a priori. 
To initialize $v$, we resort to two strategies: 
\begin{enumerate}
    \item $v_0 = Xe/n$ which is the sample average. This solution could be a bad approximation of $We/r$ when the samples are not well scattered within $\conv(W)$. 

    \item $v_0$ is the average of the vertices extracted by SNPA, an effective separable NMF algorithm. This approach is less sensitive to imbalanced distributions within $\conv(W)$. 
    
\end{enumerate} 
Since the optimal vector $v = We/r$ leads to the smallest volume solution (Theorem~\ref{th:minmaxformu}), we resort to a min-max approach: once~\eqref{eq:optmprob1} is solved and a solution $\Theta$ is obtained, $W$ can be estimated via the vertices of the dual of $\Theta$, by solving a system of linear equations: to estimate the $k$th column of $W$, solve $\Theta(:,j) \hat W(:,k) = 1$ for $j \neq k$ and then let $\tilde{W}(:,k) = U \hat W(:,k) + v$. Then the new translation vector $v$ is chosen as $\tilde{W} e/r$ which minimizes the volume of $\Theta$. 

\paragraph{Mitigating sensitivity to initialization} 

Our numerous numerical experiments have shown that solving the optimization problem in \eqref{eq:optmprob1} is  usually not too sensitive to the initialization. However when there exist two of more candidate simplices with close volumes, the algorithm might converge to suboptimal solutions. 
To reduce sensitivity to initialization, the optimization algorithm is executed multiple times concurrently, each time with distinct random initializations. The selected $\Theta$ is the one that results in the largest volume. We will use five random initializations for this purpose in our numerical experiments.  \\

Algorithm~\ref{algo1} summarizes our proposed algorithm for SSMF, which we refer to as MV-Dual.  

%The detailed steps of the optimization algorithm are elaborated in the 

\begin{algorithm}[ht!]
	\caption{Maximum Volume in the Dual (MV-Dual)} \label{algo1}
	\begin{algorithmic}[1]
		\REQUIRE Data matrix $X \in \mathbb{R}^{m \times n}$, a factorization rank $r$, the regularization parameter $\lambda>0$, the number of random initializations $n\_init$ (default = 5). 
		
		\ENSURE A matrix $W$ such that $X \approx WH$ where $H$ is column stochastic.  \vspace{0.2cm} 
		
		{\emph{\% Step 1. Initialization of $v$ and $Y$}} 
  
		\STATE Initialize $v_0$ with the sample mean $v_0 = Xe/n$ {or with $X(:,\mathcal{K})e/|\mathcal{K}|$ where $\mathcal{K}$ is obtained via SNPA}. 
  
            %\STATE Remove $v_0$ from the samples, that is, $\hat{X} = X -v_0 e^\top$.
		
            \STATE Let $Y = U^\top \left( X - v_0 e^\top \right) 
            = U^\top  X - U^\top v_0 e^\top$, 
            
            where the columns of $U$ are the first $(r-1)$ singular vectors of $X-v_0 e^\top$.  \vspace{0.2cm}  
            
            %Project the columns of $\hat{X}$ to a $(r-1)$-dimensional space using the truncated SVD. Let the compact SVD of $\hat{X}$ be $U\Sigma V^\top$ where $U \in \mathbb{R}^{m \times (r-1)}$, $\Sigma \in \mathbb{R}^{(r-1) \times (r-1)}$ and $V \in \mathbb{R}^{n \times (r-1)}$. The projected samples $Y \in \mathbb{R}^{(r-1) \times n}$ are obtained by: $Y = U^\top \hat{X}$. \vspace{0.2cm}  

		{\emph{\% Step 2. Initialize the set of solutions}}

            \STATE Initialize the set of $n\_init$ solutions as  
            $\mathcal{S}=\{ Z_i \}_{i=1}^{n\_init}$ where $Z_i =\left[ \begin{array}{c}
                        \ {\Theta}_i \\
                        e^\top
                        \end{array} \right] \in \mathbb{R}^{r \times r}$ and the entries of ${\Theta}_i\in \mathbb{R}^{r-1 \times r}$ are sampled from $\mathcal{N}(0,1)$.  
            \STATE $p = 1$. 
            
            \WHILE{not converged: $p=1$ or $\frac{\|v_p-v_{p-1}\|_2}{\|v_{p-1}\|_2} > 0.01$} %\ngc{What is the stopping criterion actually? I would be good to mention in the paper.}}
                \STATE{\emph{\% Step 3.a. Update $\Theta$ and $W$}} %\vspace{0.2cm} 
                %\ngc{You keep the same $Z_i$'s obtained at the previous iteration?} %\COMMENT{\mac{yes and it helped it converge without saving the best solution. Like I start with 5 $Z_i$ and update each till convergence. For the next iteration, the current $Z_i$ (for $i=1,...,5$ is used as the initialization.}}
                
                \FOR{each candidate matrix $Z_i$ in $\mathcal{S}$ (can be parallelized)} 
                    %\FOR{$k=1,\dots,r$}
%                		\STATE $f^{(k)} \leftarrow Z_i^{-1}(:,k)$
                		\STATE Solve \eqref{eq:thirdformupolar} via alternating optimization to update $Z_i$ and $\Theta_i$.  \label{algo:stepopti8}
        		%\ENDFOR
                \ENDFOR
                \STATE Compute the volume of each of candidate solutions in $\mathcal{S}$ and select the one with the largest volume, which we denote $\Theta$. 

       \STATE Recover $\hat{W}$ by computing the dual of  $\conv\left( \Theta\right)$.

        \STATE Project back to the original space: $W = U\hat W + v_{p-1}$.

                 %\vspace{0.2cm}  
        
    	%	\STATE Project $\hat{W} \in \mathbb{R}^{(r-1) \times r}$ back to the original $m$-dimensional space: $W =  U \hat{W}$.      
                \STATE{\emph{\% Step 3.b. Update $v$ and $Y$}}

                \STATE Let $v_p \leftarrow W e / r$, and 
                let $Y \leftarrow U^\top X - U^\top v_p e^\top$. %\emph{\% No need to recomputer $U^\top X$}

              %  \STATE Let $v_p \leftarrow \hat{W} e/r$, and                 let $Y \leftarrow Y - v_p e^\top$. 
                
                \STATE $p = p+1$. 
            \ENDWHILE

	\end{algorithmic}
\end{algorithm}

\paragraph{Computational cost} 

The preprocessing requires the computation of the truncated SVD, in $\mathcal{O}(mnr^2)$ operations.  
The main cost is to solve~\eqref{eq:thirdformupolar} by alternatively optimizing~\eqref{eq:optmprob1} which is a quadratic program in $\mathcal{O}(n)$ variables and constraints. Such problems require $\mathcal{O}(n^3)$ operations in the worst case. However, we have observed that it is typically solved significantly faster by the solver; rather in linear time in $n$ --we will solve real instances of~\eqref{eq:thirdformupolar} with $n = 10^4$ in 15 seconds (Table~\ref{jaspert}). The reason is that this problem has a particular structure. 
The variables $Z(:,k)$ and $\Theta(:,k)$ are $r$-dimensional, while the $n$-dimensional variable, $\Delta(:,k)$, only appears with the identity matrix in the constraints. In the noiseless case, $\Delta(:,k)=0$ and hence it could be removed from the formulation leading to a 
$\mathcal{O}(r^3)$ complexity. 
In the noisy case, only a few entries of $\Delta(:,k)$ will be non-zero, namely the entries corresponding to data points outside the hyperplane defined by $\Theta(:,k)$. Further research include the design of a dedicated solver to tackle~\eqref{eq:optmprob1}, e.g., using an active-set approach. 

Note that in step~\ref{algo:stepopti8} of Algorithm~\ref{algo1}, we use the stopping criterion $\frac{\|Z_\ell - Z_{\ell-1}\|_F}{\|Z_{\ell-1}\|_F} \leq 10^{-3}$ where $Z_\ell$ is obtained after updating each column of $Z_{\ell-1}$ using~\eqref{eq:optmprob1}, or a maximum number of 100 iterations.

\section{Numerical experiments} \label{sec:exp}

In this section, we present numerical experiments to show the efficiency of the proposed MV-Dual algorithm under various settings and conditions. All experiments are implemented in Matlab (R2019b), and run on a laptop with Intel Core i7-9750H, @2.60 GHz CPU and 16 GB RAM. The code, data and all experiments are available from~\url{https://github.com/mabdolali/MaxVol_Dual/}.  

% \paragraph{Quality measurement} We use common quality measurements to quantify the performances of SSMF algorithms, namely: 
% \begin{itemize}
% \end{itemize}

\paragraph{SSMF algorithms} We compare the performance of MV-Dual to six state-of-the-art algorithms:
\begin{itemize}
	\item Successive nonnegative projection algorithm (SNPA)~\cite{gillis2014successive} is based on \textit{separability} assumption and presents a robust extension to the successive projection algorithm (SPA)~\cite{araujo2001successive, gillis2014successive} by taking advantage of the nonnegativity constraint in the decomposition.
 
	\item Simplex volume minimization (Min-Vol) fits a simplex with minimum volume to the data points using the following optimization problem~\cite{leplat2019minimum}:
	\[\min_{W,H} \|X-WH\|_F^2 + \lambda \logdet(W^\top W + \delta I_r) \quad \text{s.t. } H(:,j) \in \Delta^r \ \text{for all } j.\]
	This problem is optimized based on a block coordinate descent approach using the fast gradient method. The parameter $\lambda$ is chosen as in~\cite{leplat2019minimum}: $\tilde{\lambda} \frac{\|X-W_0 H_0\|_F^2}{\logdet(W_0^\top W_0 + \delta I_r)}$ where $(W_0,H_0)$ is obtained by SNPA and $\tilde{\lambda} \in \{0.1,1,5\}$ where 0.1 is the default value in~\cite{leplat2019minimum}.  
 
        \item Minimum-Volume Enclosing Simplex (MVES)~\cite{chan2009convex} searches for an enclosing simplex with minimum volume and converts the problem into a determinant maximization problem by focusing on the inverse of $\Tilde{W}$ defined in \eqref{eq:tW}. 
        
	\item Maximum volume inscribed ellipsoid (MVIE)~\cite{lin2018maximum} inscribes a maximum volume ellipsoid in the convex hull of the data points to identify the facets of $\conv(W)$. 
 
	\item Hyperplane-based Craig-simplex-identication (HyperCSI)~\cite{lin2015fast}: HyperCSI is a fast algorithm based on SPA but does not rely on separability assumption. HyperCSI extracts the \textit{purest} samples using SPA and uses these samples to estimate the enclosing facets of the simplex.
 
	\item Greedy facet based polytope identification (GPFI)~\cite{abdolali2021simplex} has the weakest conditions to recover the unique decomposition among the stat-of-the-art methods. This approach sequentially extracts the facets with largest number of points by solving a computationally expensive mixed integer program. 
\end{itemize}

To assess the quality of a solution, $W$, we measure the relative distance between the column of $W$ and the columns of the ground-truth $W_t$: 
	\[
 ERR = \min_{\pi, \text{a permutation}} \frac{\|W_t - W_\pi\|_F}{\|W_t\|_F},
 \]
	where $W_\pi$ is obtained by permuting the columns of $W$.

\subsection{Synthetic data} 

In this section, we compare the SSMF algorithms on noiseless and noisy synthetic data sets.

%We study the effectiveness and properties of the proposed approach under various settings, including: noiseless, noisy and rank-deficient cases. 

%\subsubsection{Full-rank factorization}

\paragraph{Data generation}

We generate synthetic data following~\cite{abdolali2021simplex}. Two categories of samples are generated: $n_1$ samples are produced exactly on the $r$ facets, and $n_2$ samples are produced within the simplex, for a total number of $n = n_1 + n_2$ samples. The entries of the ground-truth matrix $W_t$ are uniformly distributed in the interval $[0,1]$ and the non-zero columns of $H_t$ are generated using the Dirichlet distribution with all parameters equal to $1/d$ where $d$ is the dimension of the simplex where samples are generated. %with uniform distribution in the simplex. %For more details see~\cite{abdolali2021simplex}. 
We define the {purity} parameter $p \in (\frac{1}{r-1},1]$ as  $p(H_t) = \min_{1 \leq k \leq} \|H_t(k,:)\|_{\infty}$ which quantifies how well the ground-truth data is spread within $\conv(W_t)$. (The lower bound $\frac{1}{r-1}$ comes from the fact that $n_1$ columns of $H$ are on facets of $\Delta^r$, that is, have at least one entry equal to zero.) 
Given a purity level $p$, columns of $H$ are resampled as long as they contain an entry larger than $p$. 
%respects  the ``separability" and ``SSC" assumptions. This parameter is defined as. 
Note that the separability assumption is satisfied when $p(H_t) = 1$, hence the columns of $W_t$ appear as columns among the samples in $X$. 
%As the $p$ decreases, this assumption is no longer valid. However, 
The SSC condition is satisfied for smaller values of purity values~\cite{lin2018maximum}. %, that is, $p \in [\frac{1}{\sqrt{r-1}}, 1]$ 
For the noisy setting, we add independent and identically distributed mean-zero Gaussian noise to the data, with variance chosen according to the following formula for a given signal-to-noise (SNR) ratio: 
\[
\text{variance} = \frac{\sum_{i=1}^m \sum_{j=1}^n X(i,j)^2}{10^{SNR/10} \times m \times n}. 
\]

\paragraph{Parameter setting} {For noiseless cases, we can set $\lambda$ to any high number. We used $\lambda=100$ for all the noiseless experiments. We set $\lambda$ to ${10, 1, 0.5}$ for SNR values of ${60, 40, 30}$, respectively.}

\paragraph{Noiseless data}

First, we compare the performances %of proposed MV-Dual to the state-of-the-art SSMF algorithms 
for different values of purity parameters $p$ in the noiseless case. Due to the randomness of the data generation process, the reported results are the average over 10 trials. We evaluate the ERR metric for three cases of $r=m=\{3,4,5\}$ vs 7 different purity values $p \in [\frac{1}{r-1}+0.01, 1]$. 
For the data generation, we set $n_1=30 \times r$ (30 samples on each facet) and $n_2=10$ (10 samples within the simplex) for a total of $n=30 \times r + 10$ samples. The average ERR and running times over 10 trails are reported in Figure ~\ref{noiseless}. 
\begin{figure*}[!htbp]
	\begin{minipage}[b]{0.5\linewidth}
		\centering
		\centerline{\includegraphics[width=8cm]{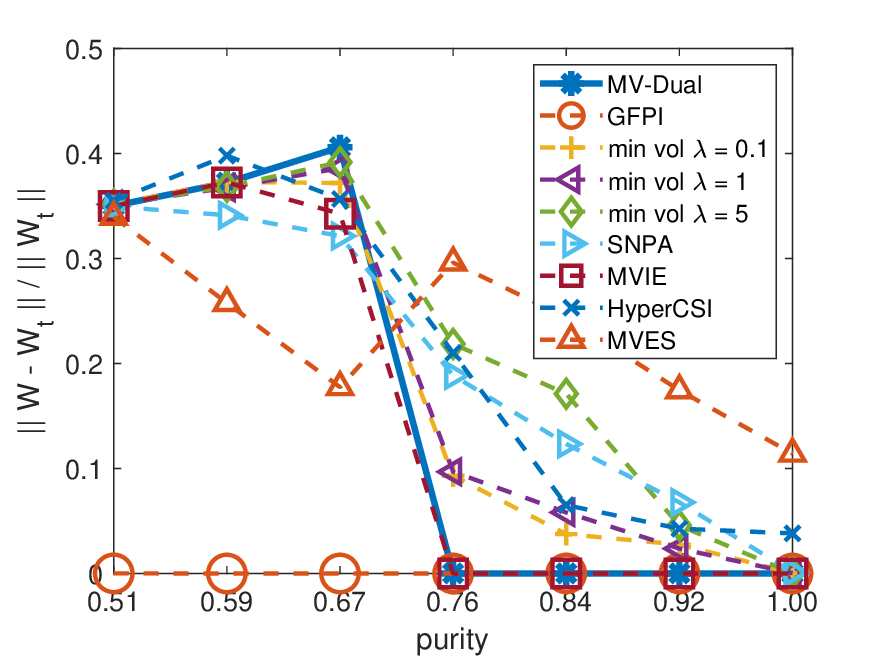}}
		%  \vspace{1.5cm}
		\centerline{(a) ERR for $r=m=3$}\medskip
	\end{minipage}
	\hfill
	\begin{minipage}[b]{0.5\linewidth}
		\centering
		\centerline{\includegraphics[width=8cm]{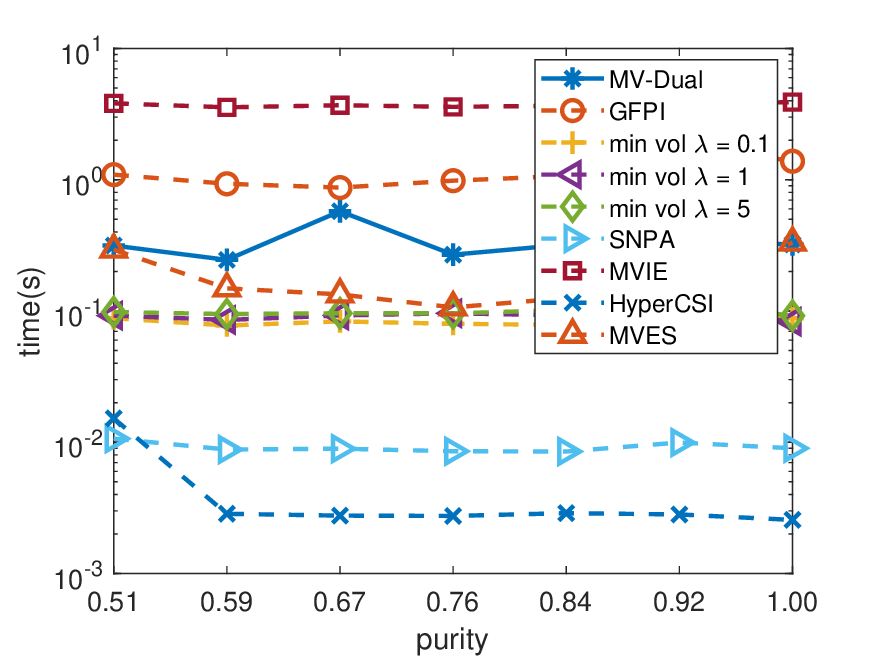}}
		%  \vspace{1.5cm}
		\centerline{(b) Time(s) for $r=m=3$}\medskip
	\end{minipage}
	\hfill
	\begin{minipage}[b]{0.5\linewidth}
		\centering
		\centerline{\includegraphics[width=8cm]{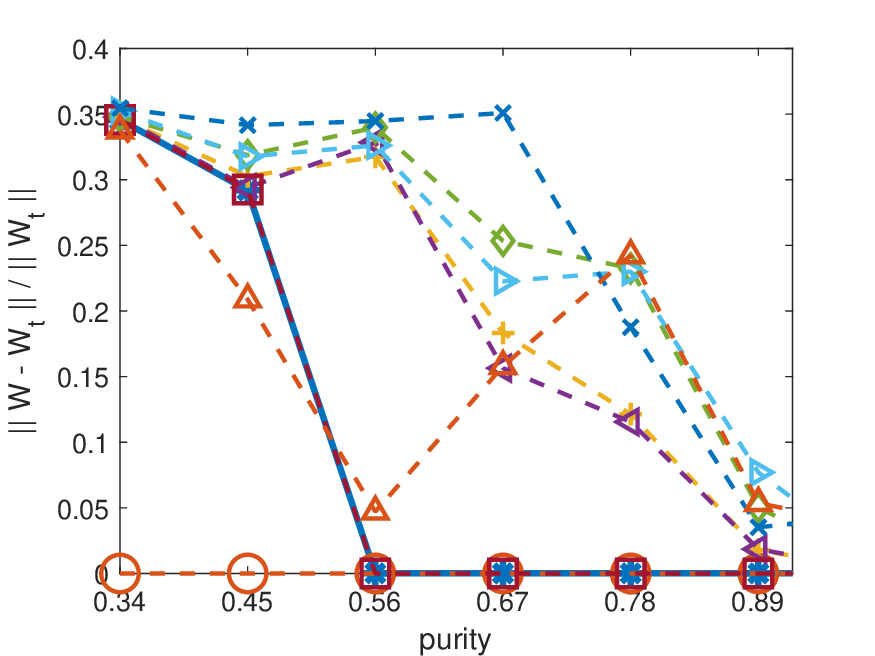}}
		\centerline{(c) ERR for $r=m=4$}\medskip
	\end{minipage}
	\hfill
	\begin{minipage}[b]{0.5\linewidth}
		\centering
		\centerline{\includegraphics[width=8cm]{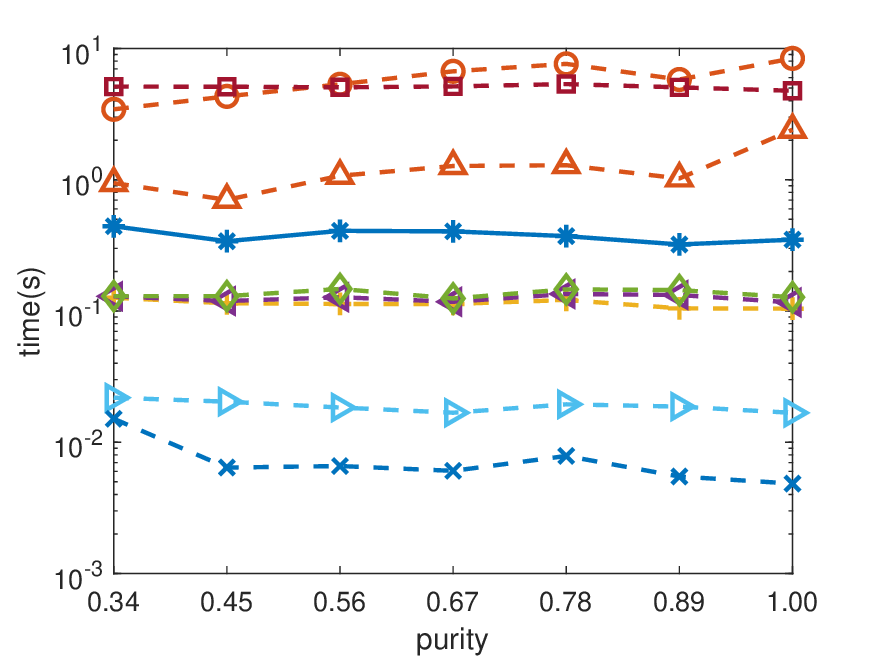}}
		%  \vspace{1.5cm}
		\centerline{(d) Time(s) for $r=m=4$}\medskip
	\end{minipage}
	\begin{minipage}[b]{0.5\linewidth}
		\centering
		\centerline{\includegraphics[width=8cm]{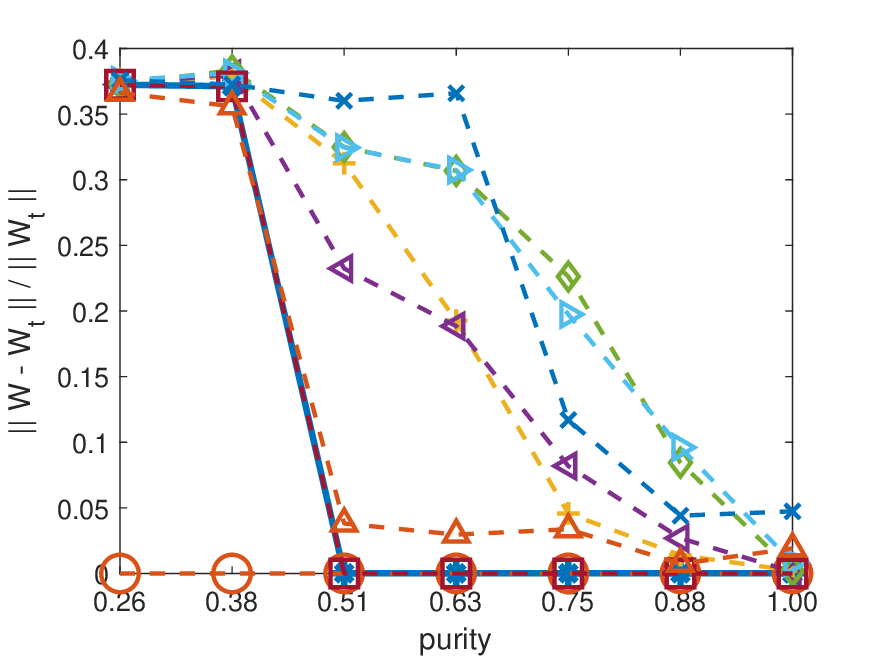}}
		\centerline{(e) ERR for $r=m=5$}\medskip
	\end{minipage}
	\hfill
	\begin{minipage}[b]{0.5\linewidth}
		\centering
		\centerline{\includegraphics[width=8cm]{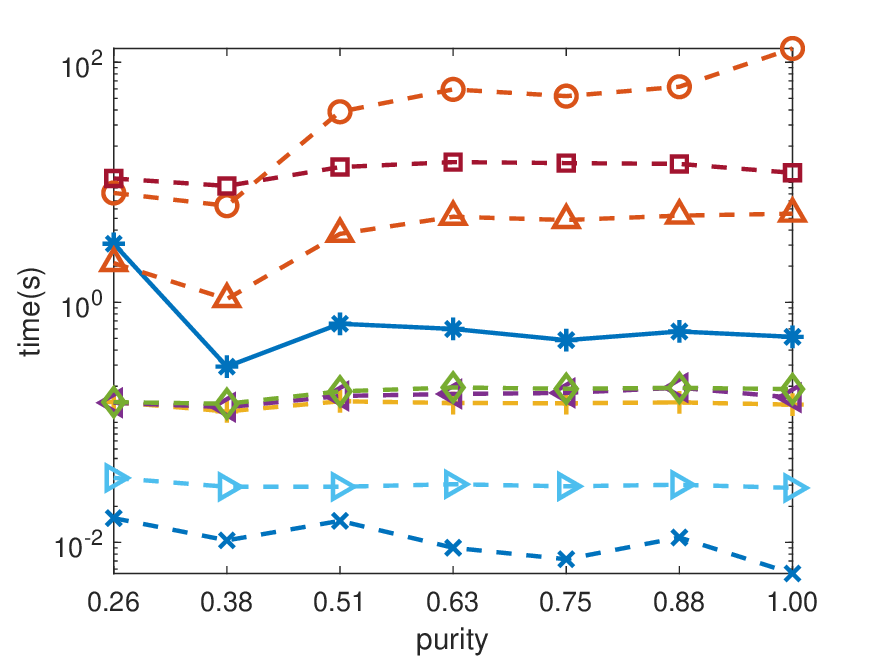}}
		%  \vspace{1.5cm}
		\centerline{(f) Time(s) for $r=m=5$}\medskip
	\end{minipage}
	\caption{Average ERR metric and running time (in seconds)vs purity over 10 trials for noiseless data and different values of $r$ and $m$. \label{noiseless} 
 %\ngc{Since MVIE/MV-Dual/GFPI perform the same for low purity, it is hard to distinguish them... in fact, it took me some seconds to find MV-Dual. Since this is the 'most important' algorithm, it would be good to use another symbol, no? If you take different sizes of symbols, they can be better distinguished... (e.g., one is inside another, e.g., take a large circle for MVIE, a smaller triangle for GFPI and a cross for MV-Dual?)} \gbc{Or maybe a thick continuous line?} \ngc{Yes, this would be nice.}
 } 
\end{figure*} 
We observe that:
\begin{itemize}
	\item MV-Dual performs as well as MVIE and has significantly lower computational time.
 
	\item GFPI achieves perfect recovery of ground-truth factors for all purity levels in all cases. However, the run time of GFPI is significantly larger as it relies on solving mixed integer programs. 
 
	\item Min-vol performs better than SNPA for purities less than one, but does not recover the ground-truth factors even when the SSC condition is satisfied.
 
\end{itemize}

For low values of the purity, only GFPI performs perfectly. The reason is that the data does not satisfy the SSC, and there exists smaller volume solutions (but with less points on their facets) that contain the data points. {This is illustrated for a simple example for $r=3$ in Fig~\ref{fig:mvdual-gfpi-low-purity}, where the facet-based criterion used in GFPI finds the correct endmembers, whereas the volume-based MV-Dual selects the enclosing simplex with smaller volume.}
%\ngc{Add a small illustrations for $r=3$?} 
\begin{figure*}[!htbp]
		\centering
		\centerline{\includegraphics[width=8cm]{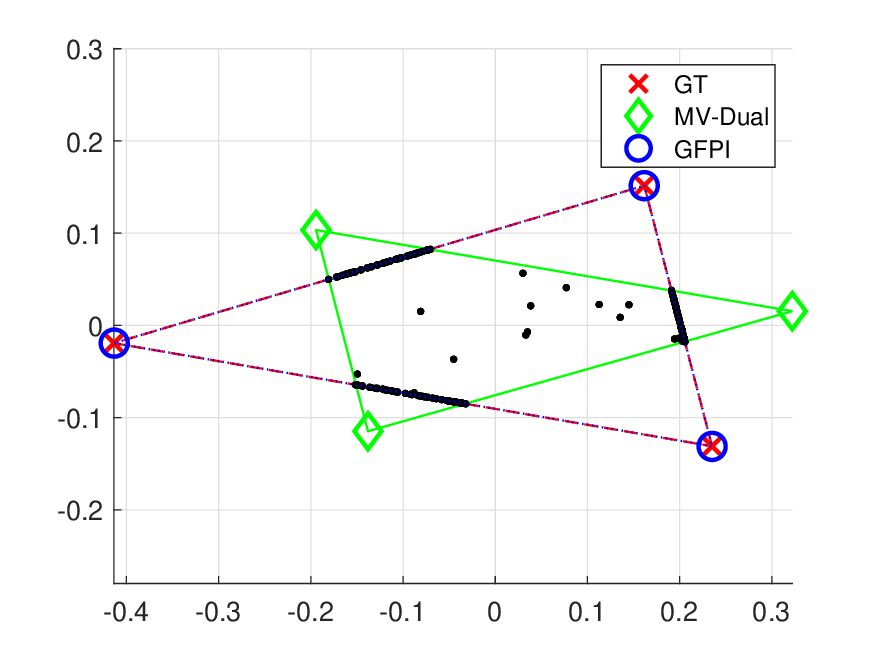}}
		%  \vspace{1.5cm}
	\caption{MV-Dual vs GFPI in the case of low-purity. GT stands for ground truth.}
		\label{fig:mvdual-gfpi-low-purity}
\end{figure*}

\paragraph{Noisy data} 
%We investigate the impact of noise on the performance of SSMF approaches. 
We consider three SNRs, in \{60, 40, 30\}, for $m=r=\{3,4\}$ and generate synthetic ground-truth factors $W_t$ and $H_t$ identical to the previous noiseless experiment (with $n_1=30 \times r$ and $n_2=10$). The average ERR metric and running time over 10 trials are reported in Figure~\ref{noisy_ERR} and {the average run times  are summarized in Tables~\ref{tab:timenoisy-r-3} ($r=3$) 
and~\ref{tab:timenoisy-r-4} ($r=4$).} 
\begin{figure*}[!htbp]
	\begin{minipage}[b]{0.5\linewidth}
		\centering
		\centerline{\includegraphics[width=8cm]{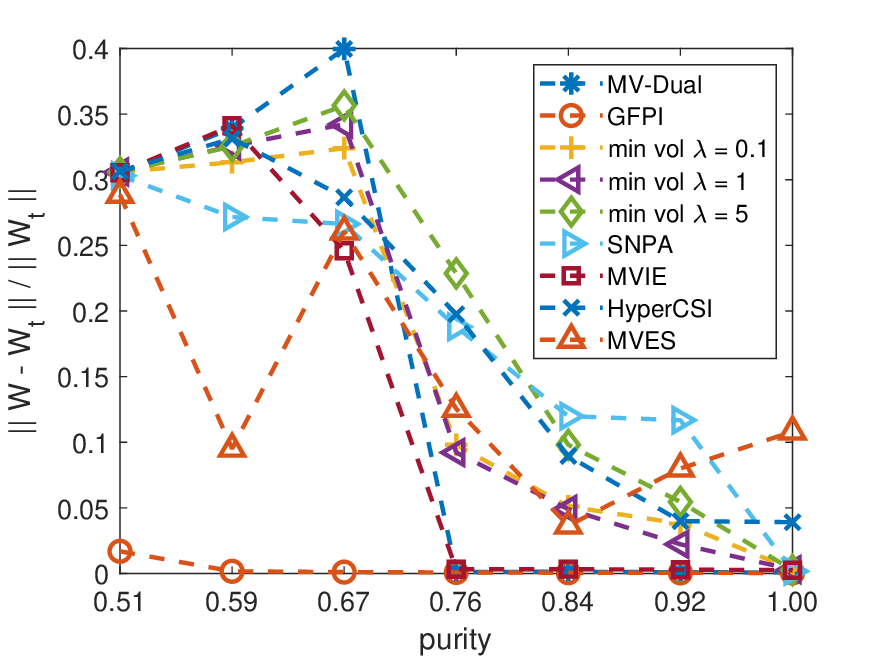}}
		%  \vspace{1.5cm}
		\centerline{(a) $r=m=3$, SNR = 60}\medskip
	\end{minipage}
	\hfill
	\begin{minipage}[b]{0.5\linewidth}
		\centering
		\centerline{\includegraphics[width=8cm]{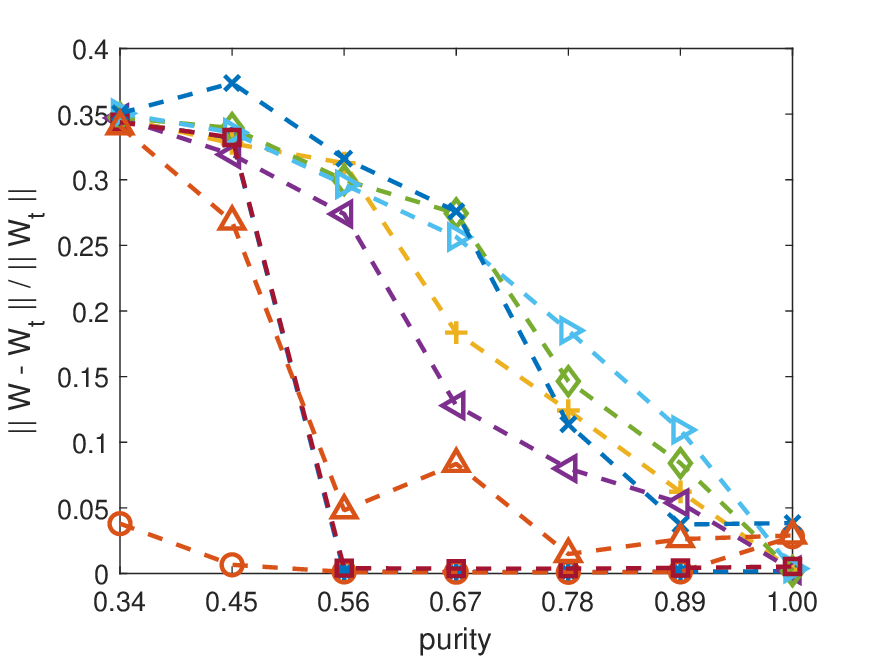}}
		%  \vspace{1.5cm}
		\centerline{(b) $r=m=4$, SNR = 60}\medskip
	\end{minipage}
	\hfill
	\begin{minipage}[b]{0.5\linewidth}
		\centering
		\centerline{\includegraphics[width=8cm]{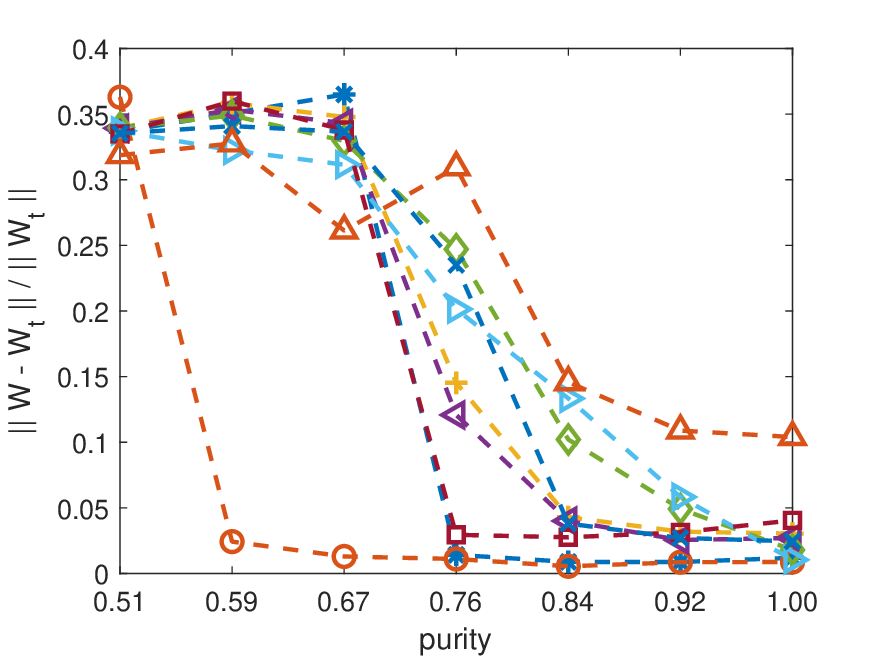}}
		\centerline{(c) $r=m=3$, SNR = 40}\medskip
	\end{minipage}
	\hfill
	\begin{minipage}[b]{0.5\linewidth}
		\centering
		\centerline{\includegraphics[width=8cm]{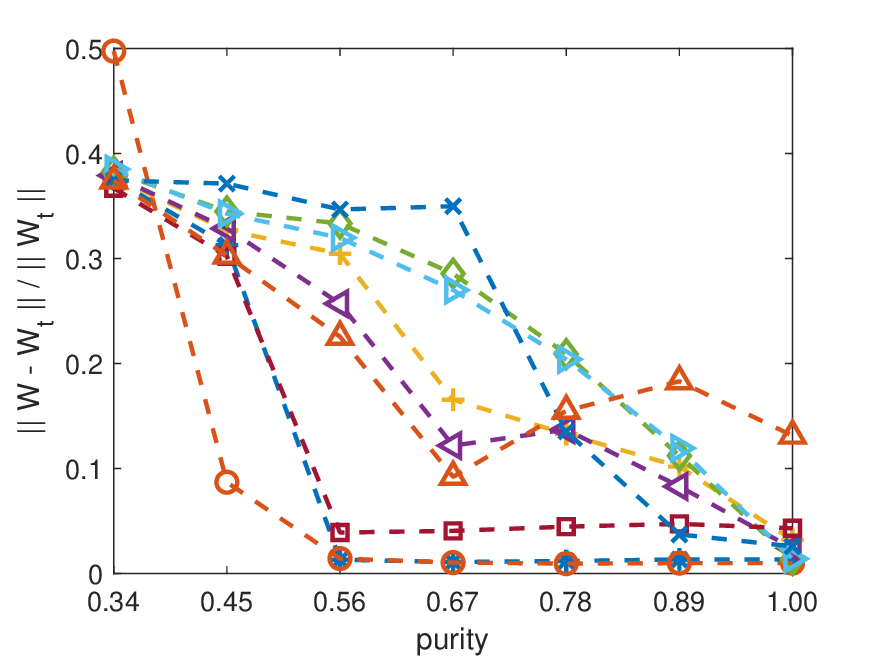}}
		%  \vspace{1.5cm}
		\centerline{(d) $r=m=4$, SNR = 40}\medskip
	\end{minipage}
	\begin{minipage}[b]{0.5\linewidth}
		\centering
		\centerline{\includegraphics[width=8cm]{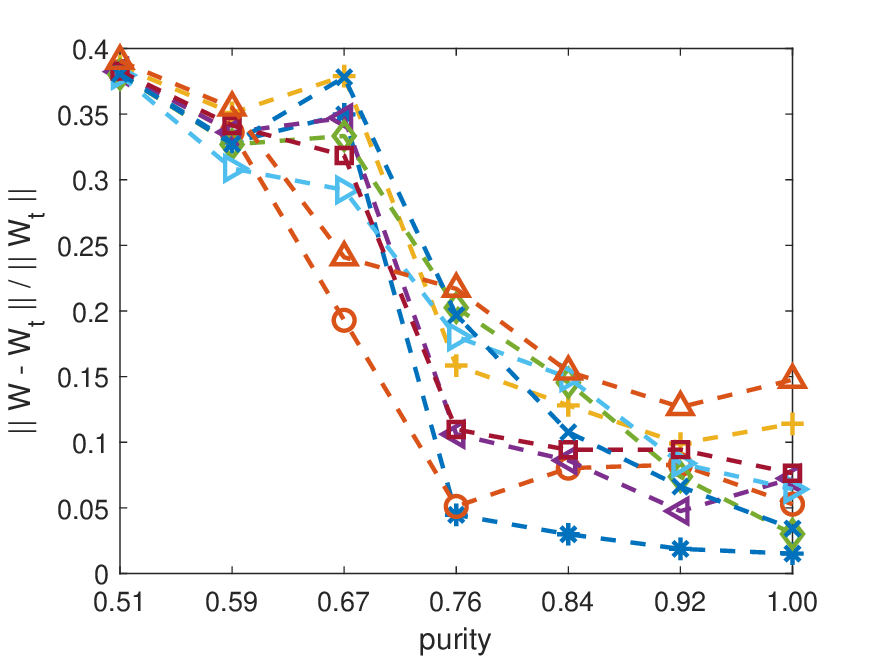}}
		\centerline{(e) $r=m=3$, SNR = 30}\medskip
	\end{minipage}
	\hfill
	\begin{minipage}[b]{0.5\linewidth}
		\centering
		\centerline{\includegraphics[width=8cm]{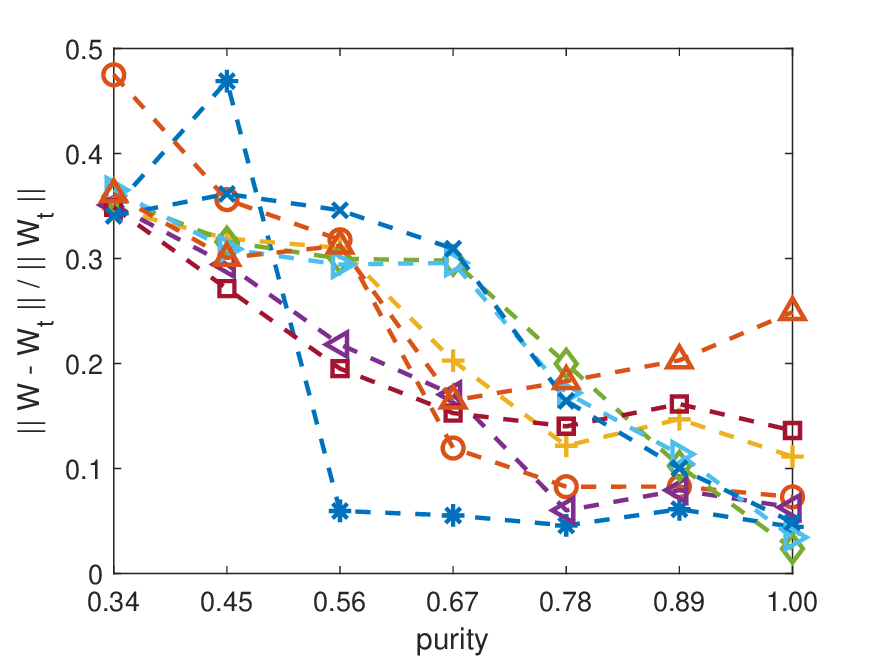}}
		%  \vspace{1.5cm}
		\centerline{(f) $r=m=4$, SNR = 30}\medskip
	\end{minipage}
	\caption{Average ERR metric vs purity over 10 trials for noisy data and different values of $r$, $m$ and SNR levels. \label{noisy_ERR}} 
\end{figure*}

\begin{center}
	\begin{table*}[!htbp]
		\begin{center}
            \small 
			\tabcolsep=0.07cm
			\begin{tabular}{c||c|c|c|c|c|c|c|c|c}
				%\hline
				& MVDual & GFPI  &  min vol & min vol & min vol & SNPA & MVIE & HyperCSI & MVES  \\
               SNR & & & $\lambda=0.1$ & $\lambda=1$ & $\lambda=5$ & & & &  \\\hline
				30 & 0.56$\pm$0.11 & 7.76$\pm$3.51 & 0.12$\pm$0.01 & 0.13$\pm$0.01 & 0.14$\pm$0.02 & 0.01$\pm$0.001 & 5.28$\pm$0.23 & 0.01$\pm$0.004 & 0.30$\pm$0.04 \\
				40 & 0.45$\pm$0.06 & 4.18$\pm$1.12 & 0.10$\pm$0.01 & 0.11$\pm$0.01 & 0.13$\pm$0.01 & 0.01$\pm$0.00 & 4.96$\pm$0.12 & 0.005$\pm$0.004 & 0.30$\pm$0.05   \\
                60 & 0.42$\pm$0.06 & 1.47$\pm$0.45 & 0.07$\pm$0.01 & 0.08$\pm$0.01 & 0.09$\pm$0.01 & 0.01$\pm$0.00 & 3.78$\pm$0.12 & 0.001$\pm$0.00 & 0.26$\pm$0.07  \\
				%\hline
			\end{tabular} 
   			\caption{Average run times in seconds of SSMF algorithms on noisy synthetic data for $r=3$. \label{tab:timenoisy-r-3}} 
		\end{center}
	\end{table*}
\end{center}

\begin{center}
	\begin{table*}[!htbp]
		\begin{center}
		\small  
			\tabcolsep=0.07cm
			\begin{tabular}{c||c|c|c|c|c|c|c|c|c}
				& MVDual & GFPI  &  min vol & min vol & min vol & SNPA & MVIE & HyperCSI & MVES  \\
                SNR & & & $\lambda=0.1$ & $\lambda=1$ & $\lambda=5$ & & & &  \\\hline
				30 & 1.36$\pm$0.99 & 143.24$\pm$76.91 
                & 0.14$\pm$0.01 &
                0.15$\pm$0.01 &
                0.18$\pm$0.02 &
                0.02$\pm$0.002 &
                6.46$\pm$0.29 &
                0.01$\pm$0.004 &
                0.61$\pm$0.07 \\
				40 & 0.97$\pm$0.82 & 64.52$\pm$36.69 & 0.15$\pm$ 0.01 & 0.17$\pm$ 0.03 & 0.20$\pm$ 0.04 & 0.02$\pm$ 0.003 & 7.13$\pm$ 0.40 & 0.01$\pm$0.007 & 0.75$\pm$0.08  \\
                60 & 0.56$\pm$0.05 & 22.79$\pm$8.87 & 0.16$\pm$0.01 & 0.19$\pm$0.03 & 0.21$\pm$0.04 & 0.02$\pm$0.01 & 7.58$\pm$0.37 & 0.01$\pm$0.01 & 1.22$\pm$0.25  
			\end{tabular} 
    \caption{Average run times in seconds of SSMF algorithms on noisy synthetic data for $r=4$. \label{tab:timenoisy-r-4}} 
		\end{center}
	\end{table*}
\end{center}

We observe that:
\begin{itemize}
        \item GFPI is the most effective algorithm when the noise level is low, but it is the slowest. 
        
        \item As the noise level increases, the performances of MVIE and GFPI gets worse. This indicates that MVIE and GFPI are more sensitive to noise. In fact, for high noise level and high purity, MV-Dual performs the best. 
        
        \item MV-Dual is the second best algorithm in low noise regimes, and the most effective algorithm as the noise level increases. Moreover, MV-Dual is significantly faster than both MVIE and GFPI. 
\end{itemize}

In Appendix~\ref{app:convsensMVdual}, we discuss the convergence of MV-Dual and sensitivity to the parameter $\lambda$. In a nutshell, the conclusions are as follows: 
\begin{itemize}
    \item MV-Dual requires a few updates of the translation vector $v$ to converge, on average less than 10. 

    \item MV-Dual is not too sensitive to the choice of $\lambda$. %\ngc{How do we actually choose it?} \mac{set to 10, 1, 0.5 for snr=60,40,30.}
\end{itemize}

\subsection{Unmixing hyperspectral data}

We apply SSMF algorithms for the unmixing problem on two real-world hyperspectral images: Samson and Jasper Ridge~\cite{zhu2017hyperspectral}. The goal is to identify the so-called pure pixels (a.k.a.\ endmembers) which are the columns of $W$, while the weight matrix $H$ contains the abundances of these pure pixels in the pixels of the image. 
To compare the performance, we use two metrics usually used in this literature: 
\begin{itemize}

	\item Mean Removed Spectral Angle (MRSA) between two vectors $x \in \mathbb{R}^{m}$ and $y \in \mathbb{R}^{m}$ is defined as 
	\[ 
  \text{MRSA}(x,y)=\frac{100}{\pi}\cos^{-1}\left(\frac{(x-\bar{x}e)^\top (y-\bar{y}e)}{\|x-\bar{x}e\|_2 \|y-\bar{y}e\|_2}\right),\]
	where $\bar{x}=\frac{1}{n}\sum_{i=1}^n x(i)$. 
 We will report the average MRSA between the columns of $W$ (permuted to minimize that quantity) and $W_t$. 
 
	\item Relative Reconstruction Error (RE): measures how well the data matrix is reconstructed using $W$ and $H$. 
	\[ 
 \text{RE} = \frac{\|X-WH\|_F}{\|X\|_F}. 
 \]
\end{itemize}

\paragraph{Samson data set}

The Samson image has $95\times 95$ pixels, each with 156 spectral bands and contains  three endmembers ($r=3$): ``soil", ``water" and ``tree"~\cite{zhu2017hyperspectral}. 
The solution obtained by MV-Dual is illustrated in Figure~\ref{samson_W}.  %two-dimensional projection of the fitted simplex to the samples and corresponding abundance maps  
We compare the performance of MV-Dual to other SSMF algorithms in Table~\ref{tab:samson}. {We set $\lambda=0.002$ in this experiment.}
\begin{figure*}[!htbp]
	\begin{minipage}[b]{0.5\linewidth}
		\centering
		\centerline{\includegraphics[width=8cm]{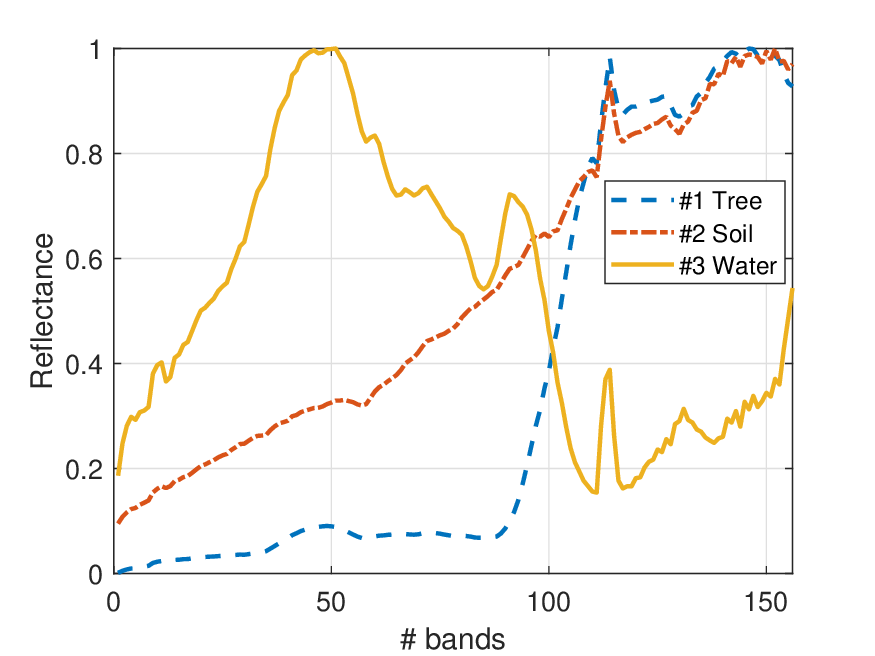}}
		%  \vspace{1.5cm}
		\center{(a) Spectral signatures of the estimated endmembers.}\medskip
	\end{minipage}
	\hfill
	\begin{minipage}[b]{0.5\linewidth}
		\centering
		\centerline{\includegraphics[width=8cm]{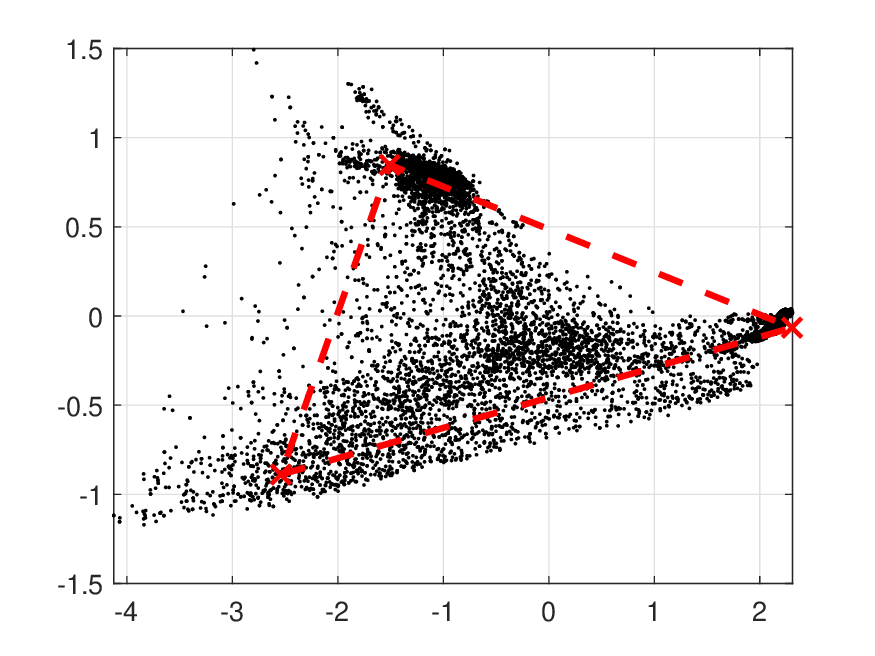}}
		%  \vspace{1.5cm}
		\center{(b) Two-dimensional projection of the data points (dots), and the polytope computed by MV-Dual. }\medskip
	\end{minipage}
	\hfill
	\begin{minipage}[b]{1\linewidth}
		\centering
		\centerline{\includegraphics[width=15cm]{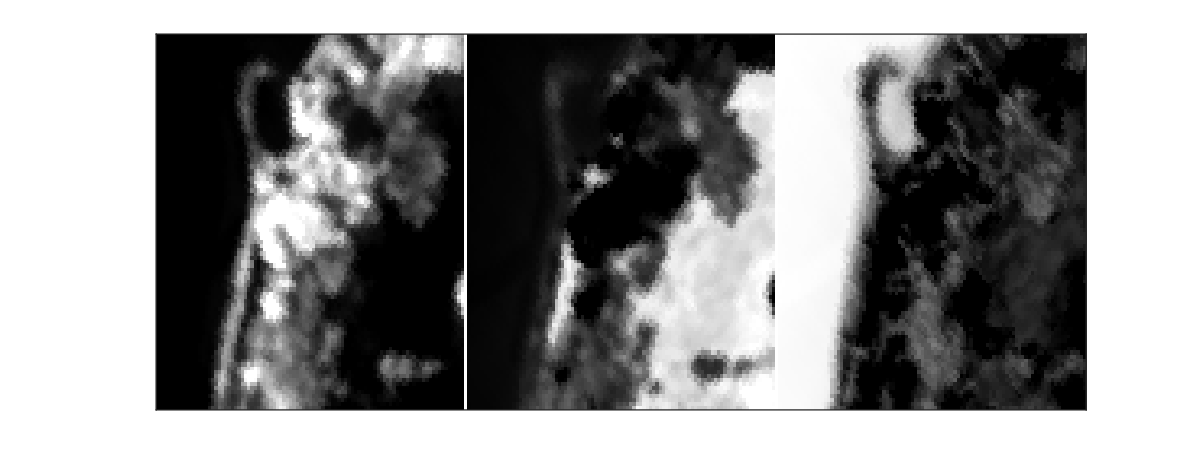}}
		\center{(c) Abundance maps estimated by MV-Dual. From left to right: soil, tree and water. 
		}\medskip
	\end{minipage}
	\caption{MV-Dual applied on the Samson hyperspectral image.}
		\label{samson_W} 
\end{figure*}
\begin{center}
	\begin{table*}[!htbp]
		\begin{center}
			\caption{Comparing the performances of MV-Dual with state-of-the-art SSMF algorithms on Samson data set. Numbers marked with * indicate that the corresponding algorithms did not converge within 100 seconds.}
			\label{tab:samson}  
			\small\addtolength{\tabcolsep}{-1pt}
			\begin{tabular}{c|ccccc}
				& SNPA  & Min-Vol & HyperCSI & GFPI & MV-Dual \\\hline
				MRSA & 2.78 & 2.58 & 12.91 & 2.97 & 2.50 \\
				$\frac{||X-WH||_F}{||X||_F}$ & 4.00\% & 2.69\% & 5.35\% & 4.02\% & 5.81\% \\
				Time (s) & 0.37 & 1.30 & 0.90 & $100^*$ & 15.78
			\end{tabular} 
		\end{center}
	\end{table*}
\end{center}
MV-Dual has the best MRSA, slightly better than Min-Vol, and has a larger computational time. This is expected as Min-Vol uses specialized first-order algorithm for the optimization, whereas MV-Dual uses the generic \textit{quadprog} method of Matlab within each iteration of the optimization procedure. 
Moreover, MV-Dual has a higher relative error: this is expected since, as opposed to Min-Vol, it does not directly minimize this quantity.

\paragraph{Jasper-ridge data set}

The Jasper Ridge data set consists of $100 \times 100$ pixels with 224 spectral bands, with four endmembers ($r=4$) in this image: ``road", ``soil", ``water" and ``tree"~\cite{zhu2017hyperspectral}. Similar to the Samson data set, we plot the extracted endmembers, projected fitted convex hull and abundance maps obtained by MV-Dual in Figure~\ref{Jasper_W}. {We set $\lambda=0.0015$ in this experiment.} The detailed numerical comparison with other algorithms is reported in Table~\ref{jaspert}.
\begin{figure*}[!htbp]
	\begin{minipage}[b]{0.5\linewidth}
		\centering
		\centerline{\includegraphics[width=8cm]{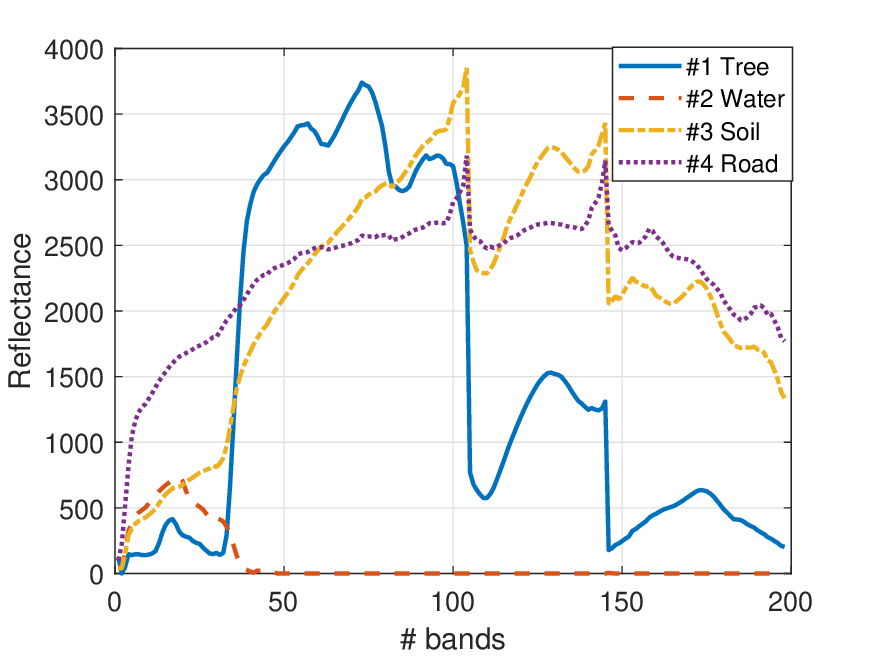}}
		%  \vspace{1.5cm}
		\center{(a) Spectral signatures of the estimated endmembers.}\medskip
	\end{minipage}
	\hfill
	\begin{minipage}[b]{0.5\linewidth}
		\centering
		\centerline{\includegraphics[width=8cm]{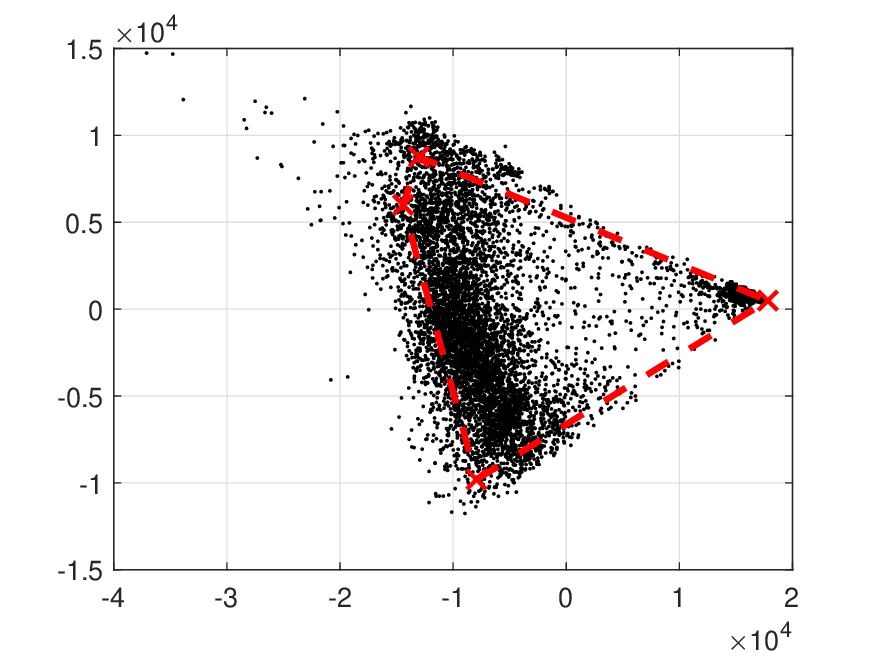}}
		%  \vspace{1.5cm}
		\center{(b) Two-dimensional projection of the data points (dots), and the polytope computed by MV-Dual. }\medskip
	\end{minipage}
	\hfill
	\begin{minipage}[b]{1\linewidth}
		\centering
		\centerline{\includegraphics[width=15cm]{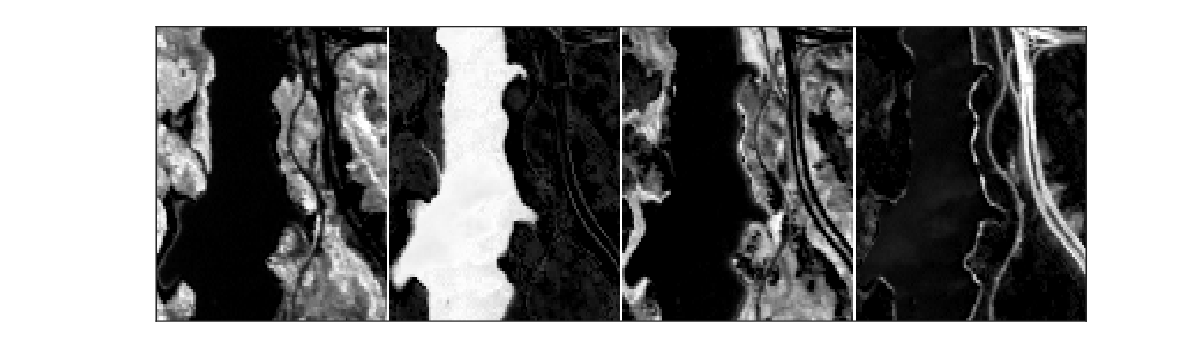}}
		\center{(c) Abundance maps estimated by MV-Dual. From left to right: road, tree, soil, water.
		}\medskip
	\end{minipage}
	\caption{MV-Dual applied on the Jasper-ridge hyperspectral image.
		\label{Jasper_W}
	} 
\end{figure*}

\begin{center}
	\begin{table*}[!htbp]
		\begin{center}
			\caption{Comparing the performances of MV-Dual with the state-of-the-art SSMF algorithms on Jasper-Ridge data set. Numbers marked with * indicate that the corresponding algorithms did not converge within 100 seconds.}
			\label{jaspert}  
			\addtolength{\tabcolsep}{-1pt}
			\begin{tabular}{c|ccccc}
				& SNPA  &  Min-Vol & HyperCSI & GFPI & MV-Dual \\\hline
				MRSA & 22.27 & 6.03 & 17.04 & 4.82 & 3.74\\
				$\frac{||X-WH||_F}{||X||_F}$ & 8.42\% & 6.09\% & 11.43\% & 6.47\% & 6.21\% \\
				Time (s) & 0.60 & 1.45 & 0.88 & $100^*$ & 43.51 
			\end{tabular} 
		\end{center}
	\end{table*}
\end{center}
The conclusions are similar as for the previous data set: MV-Dual has the best performance in terms of MRSA, here significantly smaller than Min-Vol, while the relative error is worse, but very close, to that of Min-Vol, and the computational is larger but reasonable.

%and achieves it with a significantly lower running time than the second best algorithm (which is GFPI). 

\section{Conclusion}

SSMF is the problem of finding a set of points whose convex hull contains a given set of data points. To make the problem meaningful and identifiable, several approaches have been proposed, the two most popular ones being to (1)~minimize the volume of the sought convex hull, 
and (2)~identify the facets of that convex hull by leveraging the fact that they should contain as many data points as possible (leading to sparse representations). 
In this paper, we have proposed a new approach to tackle SSMF by maximizing the volume of the polar of that convex hull. We showed that this approach also leads to identifiability under the same assumption as the minimum-volume approaches; namely, the sufficiently scattered condition (SSC). 
However, the two models are not equivalent, and our proposed maximum-volume approach is able to obtain more consistent solutions on synthetic data experiments, especially in high noise regimes, while having a low computational cost. We also showed that it provides competitive results to unmix real-world hyperspecral images. 

Further work include 
\begin{itemize}
    \item The implementation of dedicated and faster algorithms, with convergence guarantees, to solve our min-max formulation~\eqref{eq:minmax_volume}. 

\item A strategy to tune $\lambda$ automatically. In the paper, we used a fixed value of $\lambda$, but it would be possible to tune it, e.g., based on the relative error of the current solution. 

    \item The design of more robust models, e.g., replacing the $\ell_2$-norm based SVD preprocessing and the minimization of the Frobenius norm of $\Delta$ in~\eqref{eq:thirdformupolar} by more robust norms, e.g., 
 the component-wise $\ell_1$ norm.  
 
%  while the column-wise subproblems can be formulated as a linear programs. In fact, since $\Delta$ can be assumed to be nonnegative, it can simply be formulated as follows 
%  \begin{equation*} 
% 	\max_{Z,\Theta, \Delta \geq 0}  \quad \det(Z)^2 - 
%  \lambda \sum_{i,j} \Delta(i,j) \quad 
% 	\text{ such that }  \quad  
%  Z = \left[ \begin{array}{c}
%                         \Theta \\
%                         e^\top
%                         \end{array} \right] 
%                         \text{ and }
% 	 \quad Y^\top \Theta \leq {1_{n \times r}} + \Delta. 
% \end{equation*} 
% \ngc{Actually, this could be easily implemented and tested... maybe with better results on more complicated real-world data?}

\item Adapt the theory and model in the rank-deficient case, that is, when Assumption~\ref{ass:dimaffhull} is not satisfied: $\conv(W)$ is not a simplex but a polytope in dimension $d$ with more than $d+1$ vertices.

  %  \item Other ideas? Related to theory? 
\end{itemize}

\small

\bibliographystyle{spmpsci} 
\bibliography{maxVol}

\normalsize 

\section{Appendix} 

\subsection{Proofs of Lemma~\ref{lem:polar_after_translation} 
and Lemma~\ref{lem:volume_after_multiplication}} \label{app:proofs}

\begin{proof}[Proof of Lemma \ref{lem:polar_after_translation}]
For any column $\theta_i$ of $\Theta$, call $\mathcal S_i$ the corresponding face of $\mathcal S_i$, i.e. $\mathcal S_i :=\{x  \in  \mathcal S: \theta_i^\top x = 1 \}$, whose affine span is the affine subspace $\theta_i/\|\theta_i\|^2 + \theta_i^\perp$ . When we translate by $w$, the column $\theta^{(w)}_i$ of $\Theta_w$ corresponding to the face $\mathcal S_i-w$ will satisfy $(\theta^{(w)}_i)^\top (x-w) = 1$ for every $x\in \theta_i/\|\theta_i\|^2 + \theta_i^\perp$, and in particular
\[
\theta^{(w)}_i || \,\theta_i, \quad
(\theta^{(w)}_i)^\top(\theta_i/\|\theta_i\|^2-w) = 1 \implies 
\theta^{(w)}_i = \frac 1{1-\theta_i^\top w} \theta_i\implies 
\Theta_w = \Theta \diag(e-\Theta^\top w)^{-1}.
\]
Notice that if $\Theta z = 0$ and $e^\top z = 1$, then $0 = \Theta_w \diag(e-\Theta^\top w)z$ and $e^\top\diag(e-\Theta^\top w)z = e^\top z - z^\top\Theta^\top w = 1$, so $z_w = \diag(e-\Theta^\top w)z$ and  $\Theta_w\diag(z_w) = \Theta \diag(z)$.

Let now $\Theta_v$ be the polar of $A_v: =A-ve^\top$, where   $A\in \mathbb R^{(r-1)\times r}$,  $Ae=0$ and $At = v\in \conv(A)^{\circ}$ with $t\in\Delta^r$. The face of $\conv(A_v)$ associated to $\theta_i^{(v)}$ is generated by  all the columns of $A_v$ except for the $i$-th one. As a consequence, $A_v^\top\Theta_v$ has all entries equal to one except for the diagonal, and in particular it is symmetric. As a consequence, $A_v^\top\Theta_vt = \Theta_v^\top A_v t = 0$ but since $A_v^\top$ is column full rank, we find that $\Theta_v t=0$, so $z_v\equiv t$ and by the previous result $\Theta_v\diag(z_v) = \Theta_v\diag(t) = \Theta_{At}\diag(t)$ does not depend on $t$.   

\end{proof}

\begin{proof}[Proof of Lemma \ref{lem:volume_after_multiplication}]
Using the definition of volume and the multiplicativity of the determinant,
\begin{align*}
    \vol(\Theta N) &= \frac{1}{(n-1)!} \left| \det\begin{pmatrix}
        \Theta N\\ e^T
    \end{pmatrix} \right| =
    \frac{1}{(n-1)!} \left| \det\begin{pmatrix}
       \Theta\\ e^TN^{-1}
    \end{pmatrix} \right| |\det(N)|\\
    &= \frac{1}{(n-1)!} \left| \det\begin{pmatrix}
        \Theta\\ e^T
    \end{pmatrix} \right|
    \left| \det\left( I + \frac 1{e^Tw}we^T(N^{-1}-I) \right) \right|
    |\det(N)|.
\end{align*}
The eigenvalues of $ I + \frac 1{e^Tw}we^T(N^{-1}-I) $ are all equal to $1$, except, possibly, for the eigenvalue $1 +\frac 1{e^Tw}e^T(N^{-1}-I)w = \frac {e^TN^{-1}w}{e^Tw}$, thus completing the proof.

\end{proof}

Here we show that given a bounded convex polytope $\mathcal S\subseteq \mathbb R^{r-1}$ with at least $r$ vertices, the vertices of a maximum volume simplex contained in it coincide with $r$ of the vertices of $\mathcal S$. This is useful in the proof of Theorem \ref{th:minmaxformu}.

\begin{lemma} \label{ass:lemsolvert}
    Given a bounded convex polytope $\mathcal S\subseteq \mathbb R^{r-1}$ with at least $r$ vertices, the problem
    \[
   \max_{\Theta \in \mathbb{R}^{r-1 \times r}} 
\vol\big( \conv(\Theta) \big) 
\quad \text{ such that } \quad \conv(\Theta)\subseteq\conv(\mathcal S)
    \]
is solved by a matrix $\Theta$ whose columns coincide with $r$  vertices of $\mathcal S$.    
\end{lemma}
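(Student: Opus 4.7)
The plan is to show that, starting from any optimal $\Theta$, each column can be moved to a vertex of $\mathcal S$ without decreasing the volume, so an optimum realized at $r$ vertices of $\mathcal S$ always exists. The feasibility set for $\Theta$ with $\conv(\Theta)\subseteq \mathcal S$ is simply $\mathcal S^r$ (since $\mathcal S$ is convex, $\conv(\Theta)\subseteq \mathcal S$ is equivalent to every column of $\Theta$ lying in $\mathcal S$), which is compact because $\mathcal S$ is bounded. The volume is continuous in $\Theta$, so a maximizer $\Theta^\star$ exists.

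The key algebraic observation is that, fixing all columns of $\Theta$ except the $k$-th, the signed volume of $\conv(\Theta)$ is an affine function of the free column $\theta_k\in \mathbb R^{r-1}$. This follows from the formula $\vol(\conv(\Theta)) = \frac{1}{(r-1)!}\bigl|\det\bigl[\begin{smallmatrix}\Theta\\ e^\top\end{smallmatrix}\bigr]\bigr|$ by expanding the determinant along the $k$-th column. Consequently, $\theta_k \mapsto \vol(\conv(\Theta))$ is the absolute value of an affine function, hence a convex function on $\mathbb R^{r-1}$. A convex function attains its maximum over the compact convex set $\mathcal S$ at an extreme point of $\mathcal S$, that is, at a vertex of $\mathcal S$.

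With these two ingredients I can run a straightforward sequential replacement argument. Starting from the maximizer $\Theta^\star = \Theta^{(0)}$, for $k = 1,\dots, r$ in turn, I replace the $k$-th column of $\Theta^{(k-1)}$ by a vertex of $\mathcal S$ that maximizes the convex one-variable subproblem above, obtaining $\Theta^{(k)}$ with $\vol(\conv(\Theta^{(k)})) \ge \vol(\conv(\Theta^{(k-1)}))$. Because $\Theta^{(0)}$ was already optimal, equality holds throughout, and $\Theta^{(r)}$ is an optimum whose $r$ columns are all vertices of $\mathcal S$. Finally, the hypothesis that $\mathcal S$ has at least $r$ vertices ensures that this set of $r$ vertices indeed exists (possibly with repetitions if the maximum value is zero, which is an uninteresting degenerate case).

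I do not foresee a serious obstacle: the only point that requires a brief check is that the feasibility of $\conv(\Theta)\subseteq \mathcal S$ is equivalent to the columnwise condition $\theta_k\in \mathcal S$ for all $k$, which is immediate from convexity of $\mathcal S$ and from the fact that the columns of $\Theta$ are the vertices of $\conv(\Theta)$. Everything else reduces to the textbook fact that a convex function on a compact convex set attains its maximum at an extreme point, combined with linearity of the determinant in each column.
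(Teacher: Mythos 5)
Your proposal is correct and follows essentially the same route as the paper's proof: the determinant is affine in each column, so the volume is convex (the paper squares it, you take the absolute value of an affine map) and is maximized over $\mathcal S$ at a vertex, and a sequential column-by-column replacement then yields an optimal $\Theta$ whose columns are vertices of $\mathcal S$. The only cosmetic differences are your explicit compactness/existence remark and the columnwise-feasibility observation, neither of which changes the argument.
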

\begin{proof}
    By definition, $\vol(\conv(\Theta)) =  \frac{1}{(r-1)!} \left| \det\begin{pmatrix}
        \Theta\\ e^T
    \end{pmatrix} \right|$. If we single out a column $\theta$ of $\Theta$ and fix all other entries, we can see that $\det\begin{pmatrix}
        \Theta\\ e^T
    \end{pmatrix}  = c + q^\top \theta$ for a scalar $c\in \mathbb R$ and a vector $q\in \mathbb R^{r-1}$. The above maximization problem is equivalent to maximize $\vol(\conv(\Theta))^2$, that is proportional to $(c + q^\top \theta)^2 = \theta^\top qq^\top \theta +2cq^\top \theta + c^2$, i.e. a convex function in $\theta\in \mathcal S$. A global maximum of a convex function on a convex bounded polytope can always be found at one of its vertices. As a consequence, given any feasible $\Theta$ we can find a new feasible $\tilde \Theta$ with greater or equal volume and with vertices corresponding to a subset of the vertices of $\mathcal S$ by optimizing sequentially over the columns of $\Theta$. Since $\mathcal S$ has a finite number of vertices, then one of the maximum volume simplices contained inside $\mathcal S$ must coincide with a simplex formed by $r$ of its vertices.      
\end{proof}

\subsection{Convergence and sensitivity of MV-Dual} \label{app:convsensMVdual}

\paragraph{Convergence analysis}

Let us provide some insights on the convergence of MV-Dual. We choose $m=r=3$, and $p=0.76$ which is at the phase transition and hence leads to more difficult instances  (see Figure~\ref{noisy_ERR} in the paper). 
We explore two scenarios to generate the samples: 
\begin{enumerate}
    \item Balanced case: samples are evenly distributed across the three facets, as in the paper. We let $n_1=90$ and $n_2=10$. 

    \item Imbalanced case: samples are unevenly distributed among facets. In this case, we set $n=640$, with the number of samples on the three facets being $\{500, 100, 30\}$, and 10 samples chosen within the simplex.  
\end{enumerate}

At iteration $k$, let $v_{k}$ indicate the obtained translation vector $v$ and $W_k$ be the estimated endmembers. 
Figure~\ref{fig:convergence} shows the evolution of $\frac{\|v_{k}-v_{k-1}\|_2}{\|v_{k-1}\|_2}$ and $\frac{\|W_{k}-W_{t}\|_F}{\|W_{t}\|_F}$ where $W_t$ is the ground truth, for different iterations.  
MV-Dual converges fast for all noise levels, as less than 5 iterations are needed for $W_k$  to converge in MV-Dual. The explanation is that the solution $v_k$ does not need to attain the minimum for $W_k$ to correctly identify $W_t$; see Section~\ref{sec:identif}.  
%\ngc{No so clear to me... I would rather say that the convergence is fast in both cases, no?} \mac{--omitted that sentence, to be fair it was from the previous draft and I didn't edit it out.--}
\begin{figure*}[!htbp]
	\begin{minipage}[b]{0.45\linewidth}
		\centering
		\centerline{\includegraphics[width=8cm]{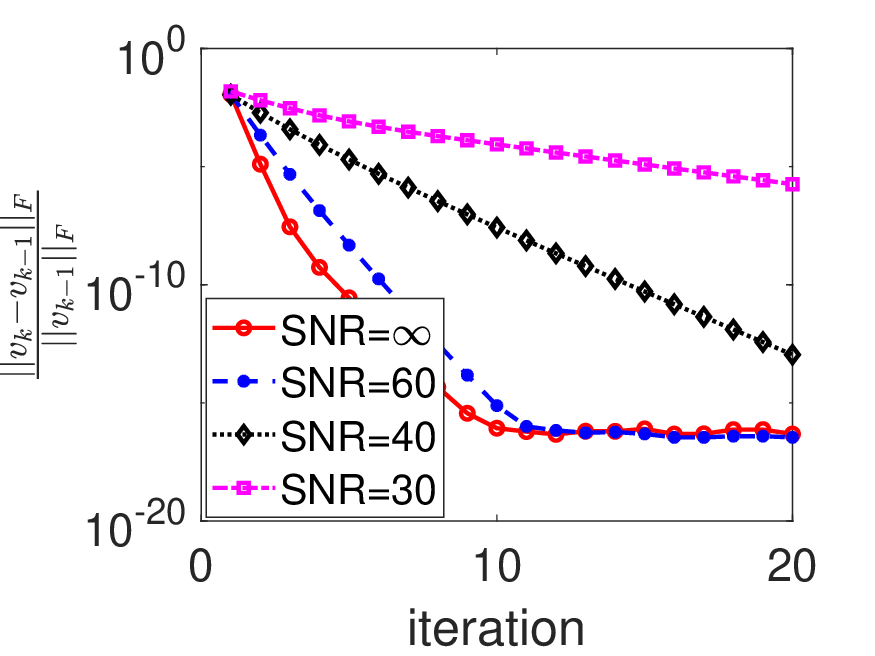}}
	\end{minipage}
	\hfill
	\begin{minipage}[b]{0.45\linewidth}
		\centering
		\centerline{\includegraphics[width=8cm]{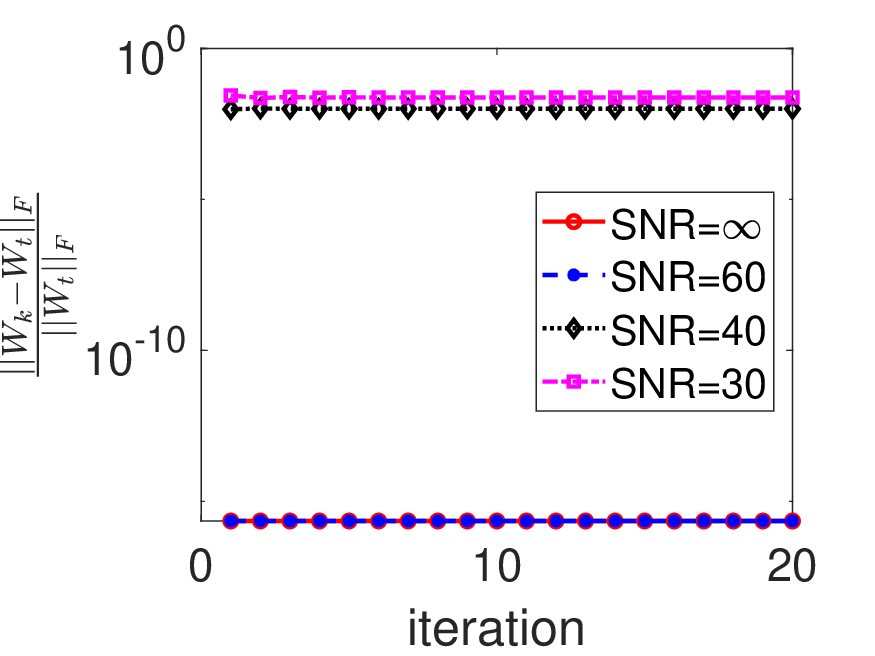}}
	\end{minipage}
        \center{(a) Balanced data}\\
	\begin{minipage}[b]{0.45\linewidth}
		\centering
		\centerline{\includegraphics[width=8cm]{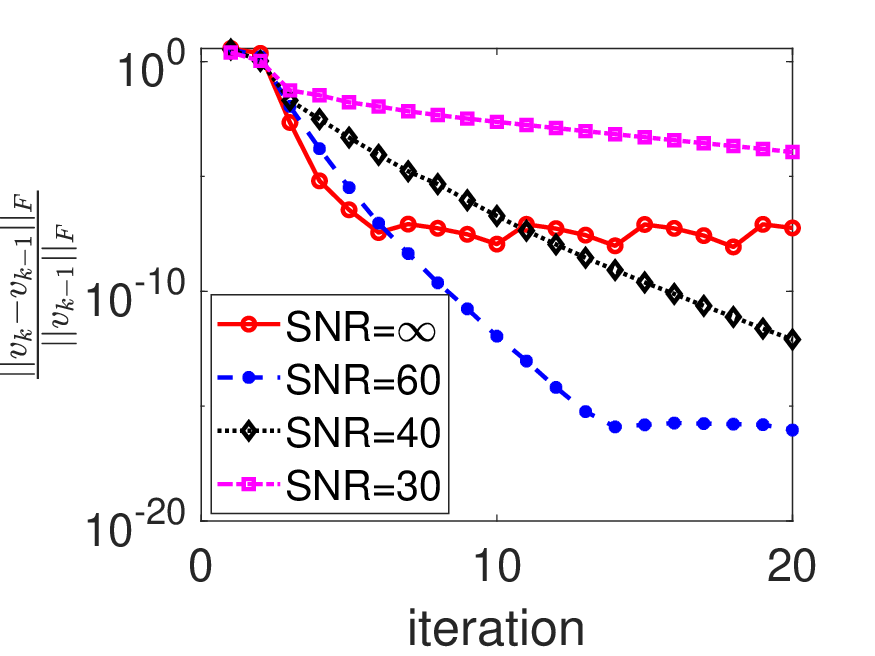}}
	\end{minipage}
 	\hfill
	\begin{minipage}[b]{0.45\linewidth}
		\centering
		\centerline{\includegraphics[width=8cm]{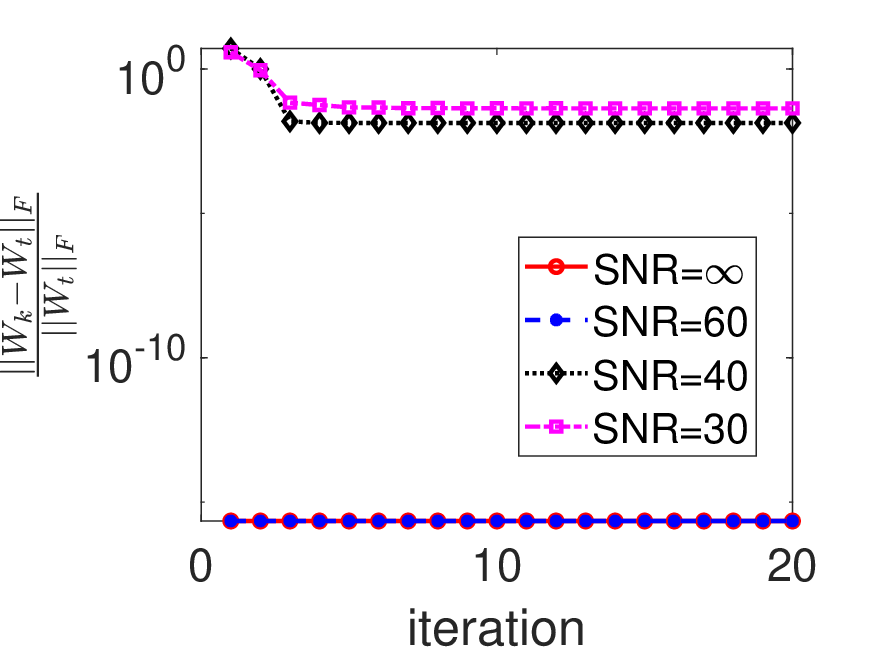}}
	\end{minipage} 
        \center{(b) Imbalanced data}
	\caption{Convergence analysis of MV-dual ($m=r=3$).}
		\label{fig:convergence}
\end{figure*}

\paragraph{Sensitivity to $\lambda$}

 Figure~\ref{fig:lambda}
 displays the average of ERR metric over 10 trials for various values of $\lambda$,  %\in \{0.01, 0.05, 0.1, 0.4, 0.7, 1, 1.5, 2, 10, 100\}$, 
 for $m=r=3$ and SNR=30. We observe that the performance of MV-Dual is stable w.r.t.\ the choice of $\lambda \in [0.4, 100]$, and the only noticeable sensitivity is in the 'transition' phase, with purity below 0.76, because there are two different simplices with small volumes containing the data points. 
\begin{figure*}[!htbp]
		\centering
		\centerline{\includegraphics[width=8cm]{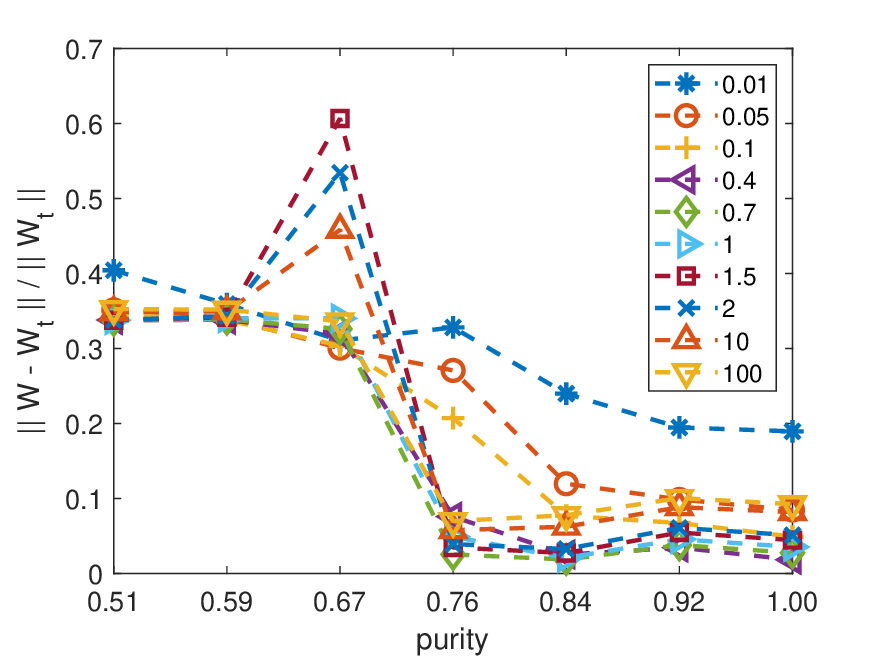}}
		%  \vspace{1.5cm}
	\caption{Performance of MV-Dual with respect to various values of $\lambda$.}
		\label{fig:lambda}
\end{figure*}

%\ngc{Should we also show results for much smaller/larger $\lambda$s? It would be good to see when the algorithm breaks down, right? In this plot, the values are still relatively close to each other.}\mac{--included wider range of $\lambda$ values--}

\paragraph{MV-Dual vs GFPI}

We have seen that volume-based approaches, such as MV-Dual, outperform sample-counting-based ones, such as GFPI, in higher noise regimes, and that MV-Dual has a more stable performance. This phenomenon is illustrated on another example on Figure~\ref{fig:mvdual-gfpi}, where $m=r=3$, $\text{SNR}=30$ and there are 50 samples on each facet, with an additional 50 samples spread within the simplex. %The ground truth $W_t$, the estimated endmembers by MV-Dual and by GFPI are depicted with red stars, green diamonds, and blue circles, respectively. 
The worse performance of  GFPI is noticeable as the noise has perturbed the orientation of the facets, leading to a worse estimation of $W_t$.  
\begin{figure*}[!htbp]
		\centering
		\centerline{\includegraphics[width=8cm]{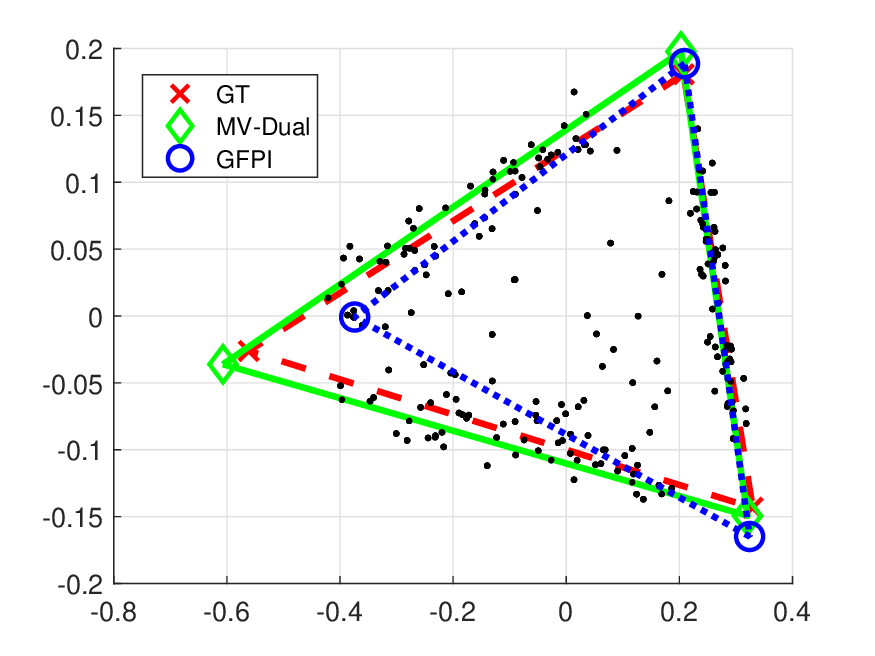}}
		%  \vspace{1.5cm}
	\caption{MV-Dual vs GFPI. GT stands for ground truth.}
		\label{fig:mvdual-gfpi}
\end{figure*}

\end{document}